\newtheorem{theo}{Theorem}
\newtheorem{lem}{Lemma}[section]
\newtheorem{defi}{Definition}
\newtheorem{pro}{Proposition}[section]
\newtheorem{coro}{Corollary}[section]
\newtheorem{remark}{Remark}[section]
\def\Xint#1{\mathchoice
   {\XXint\displaystyle\textstyle{#1}}%
   {\XXint\textstyle\scriptstyle{#1}}%
   {\XXint\scriptstyle\scriptscriptstyle{#1}}%
   {\XXint\scriptscriptstyle\scriptscriptstyle{#1}}%
   \!\int}
\def\XXint#1#2#3{{\setbox0=\hbox{$#1{#2#3}{\int}$}
     \vcenter{\hbox{$#2#3$}}\kern-.5\wd0}}
\def\dashint{\Xint-}
\DeclareMathOperator{\supp}{Supp}\DeclareMathOperator{\diam}{diam}
\DeclareMathOperator{\dist}{dist}
\def\({\left(}
\def\){\right)}
\def\1{\mathbf{1}}
\def\anep{{\mathcal{C}_\ep^\alpha}}
\def\a{\alpha}
\def\Baep{{\mathcal{B}^\alpha_\ep}}
\def\Barep{{\mathcal{B}^{\alpha,r}_\ep}}
\def\Barhoep{{\mathcal{B}^{\alpha,\rho}_\ep}}
\def\Bbep{{\mathcal{B}^\beta_\ep}}
\def\BBrep{{\mathcal{B}^{B,r}_\ep}}
\def\Bdsep{{\mathcal{B}^{2\sqrt\ep}_\ep}}
\def\Bep{{\mathcal{B}_\ep}}
\def\Betaep{{\mathcal{B}^{\eta/2}_\ep}}
\def\Bkep{{\mathcal{B}^{r_k}_\ep}}
\def\Bkiep{{\mathcal{B}^{r_{k+1}}_\ep}}
\def\D{\displaystyle}
\def\Brepa{{\mathcal{B}^{r,\alpha}_\ep}}
\def\Brep{{\mathcal{B}^r_\ep}}
\def\Bsep{{\mathcal{B}^{\sqrt\ep}_\ep}}
\def\B{{\mathcal{B}}}
\def\cba{{\overline{C}_\alpha}}
\def\Cepa{{\mathcal{C}_\ep^\alpha}}
\def\Cepia{{\mathcal{C}_\ep^{i,\alpha}}}
\def\Cepi{{\mathcal{C}_\ep^i}}
\def\Cep{{\mathcal{C}_\ep}}
\def\curl{{\rm curl\,}}
\def\C{\mathcal{C}}
\def\div{\mathrm{div} \,}
\def\dt0{{{\frac{d}{dt}}_{|t=0}}}
\def\dtep{{d^t_\ep}}
\def\ep{\varepsilon}
\def\eaep{{e_\alpha}}
\def\fepB{{f_\ep^B}}
\def\fep{{f_\ep}}
\def\fa{{f_\a}}
\def\ga{{g_\alpha}}
\def\gepal{{\tilde g_\ep^\alpha}}
\def\gepp{{{(\tilde g_\ep^\alpha)}_+}}
\def\gepm{{{(\tilde g_\ep^\alpha)}_-}}
\def\gp{{{(g_\ep)}_+}}
\def\gepB{{g_\ep^B}}
\def\gep{{g_\ep}}
\def\GRC{{M_{R+C}}}
\def\GR{{M_R}}
\def\hal{\frac{1}{2}}
\def\indic{\mathbf{1}}
\def\indic{\mathbf{1}}
\def\lapr{{\Lambda^{\alpha,r}_\ep}}
\def\laprho{{\Lambda^{\alpha,\rho}_\ep}}
\def\Lap{{\Lambda^\alpha_\ep}}
\def\lep{{|\mathrm{log }\ \ep|}}
\def\lipom{{\text{\rm Lip}_\Omega}}
\def\loc{{\text{\rm loc}}}
\def\Lp{{L^p_\loc(\mr^2,\mr^2)}}
\def\l|{\left|}
\def\mc{\mathbb{C}}
\def\mr{\mathbb{R}}
\def\mz{\mathbb{Z}}
\def\nab{\nabla}
\def\naepp{{{n_\alpha}^2}}
\def\naep{{n_\alpha}}
\def\np{\nab^{\perp}}
\def\nuep{{\nu_\ep}}
\def\nua{{\nu_\a}}
\def\om{\Omega}
\def\P{\mathcal{P}}
\def\p{\partial}
\def\Q{\mathcal{Q}}
\def\ro{\rho}
\def\r|{\right|}
\def\sm{\setminus}
\def\Tae{{T_\ep^\alpha}}
\def\tgep{{\tilde g_\ep}}
\def\URC{{\mathbf U_{R+C}}}
\def\URc{{\mathbf U_{R-C}}}
\def\URI{{\mathbf U_{R+1}}}
\def\UR{{\mathbf U_R}}
\def\vp{\varphi}
\def\dl{\hal \lep}
\def\HE{\widehat{E}}
 \numberwithin{equation}{section}
\title{Improved Lower Bounds for Ginzburg-Landau
 Energies via Mass Displacement}
\author{Etienne Sandier and Sylvia Serfaty}
\begin{document}
\maketitle
\begin{abstract} We prove some improved  estimates for the
Ginzburg-Landau energy (with or without magnetic field) in two
dimensions, relating the asymptotic  energy of an arbitrary
configuration to its vortices and their degrees, with possibly
unbounded numbers of vortices. The method is based on a
localisation of  the ``ball construction method" combined with
 a mass displacement idea which allows to compensate for negative
 errors in the ball construction estimates by energy ``displaced"
 from close by.
  Under good conditions,
 our main estimate allows to get a lower bound on the energy
 which includes  a finite order ``renormalized energy"  of vortex interaction,
 up to the best possible precision i.e.
   with only a  $o(1)$ error per vortex, and is complemented by local compactness results on the vortices.
    This is used crucially in our forthcoming paper
    \cite{papier2}. It can also serve
   to provide
 lower bounds for weighted Ginzburg-Landau energies.
\end{abstract}

\noindent
{\bf keywords:} Ginzburg-Landau, vortices, vortex balls construction, renormalized energy, mass displacement.\\
{\bf MSC classification:}    35B25, 82D55, 35Q99, 35J20.

\section*{Introduction}
 We  are interested in proving lower bounds and compactness results for Ginzburg-Landau type energies of the form
$$ G_\ep(u,A)= \hal\int_{\om_\ep} |\nab_A u|^2 + (\curl A)^2 +
\frac{(1-|u|^2)^2}{2\ep^2}$$
where   $\ep $ is a small parameter, $u$ is a
complex-valued function called ``order parameter", $A$ is $\mr^2 $
valued and is the vector potential of the magnetic field $h:=\curl
A$ and  $\nab_A = \nab- iA$. Here the domain of integration
$\om_\ep$ is a smooth bounded domain in $\mr^2$, which  can depend
on $\ep$. We are interested in particular in the case where
$\om_\ep$  gets large as $\ep \to 0$. Note that one may set $A
\equiv 0 $   to recover the simpler Ginzburg-Landau energy
$$E_\ep(u) = \frac{1}{2}\int_{\om_\ep} |\nab u|^2  + \frac{(1-|u|^2)^2}{2\ep^2}$$
without magnetic field. Our results apply to this energy functional
by making this trivial choice of $A$.

The Ginzburg-Landau energy is a famous model for superconductivity.
  In this model  the order-parameter $u$ often has  quantized
  vortices,
   which are the zeroes of $u$ with nonzero topological degree.
     Obtaining ansatz-free lower bounds for $G_\ep$
     in terms of  the vortices of $u$ has proven to be crucial
      in studying the asymptotics of minimizers of $G_\ep$,
       in particular via $\Gamma$-convergence methods.

The first study establishing  lower bounds  for Ginzburg-Landau was
the work of Bethuel-Brezis-H\'elein \cite{bbh} for solutions to the
Ginzburg-Landau equations without magnetic field  with energy
$E_\ep$ bounded by $C \lep$. Such an energy bound ensures that the
total number of vortices remains bounded as $\ep \to 0$. This was
later improved and extended in two different directions by Han-Shafrir \cite{hs} and  Almeida-Bethuel \cite{ab} for arbitrary configurations, still  with a number of vortices that remains bounded. The main limitation of such estimates is that the error terms  blow up as the number of vortices gets large.  
Then,
 Jerrard \cite{jerr} and Sandier \cite{sandier} introduced the
 ``ball construction method", which provides lower bounds
 in terms of vortices for arbitrary configurations, allowing
  unbounded numbers of vortices and much larger energies.
 This is crucial for many applications,
   since energy minimizers of the functional with applied magnetic field
    do not always satisfy a $ C \lep$ bound on their energy.
Subsequent refinements of the ball construction method were given
(see for example \cite{livre} Chap. 4 for a recent result).
The lower bound provided by the ball construction method also provides a crucial compactness result on the vorticity (roughly the sum of Dirac masses at the vortex centers, weighted by their degrees), these are the so-called ``Jacobian estimates", see Jerrard-Soner \cite{js} and \cite{livre} Chap. 6 and references therein. They say roughly that the vorticity is controlled by $\frac{1}{\lep} $ times the energy.
 For other subsequent  works  refining  those results in a slightly
different direction, see \medskip   also \cite{ss4,jspirn,st}.

In a way our objective here can be seen as obtaining next order terms  (order 1 as opposed to order $\lep$)  in such  estimates, both energy estimates and compactness results.

For a given $(u, A)$, let us  define the energy density
$$ e_\ep(u,A)=\hal\( |\nab_{A} u|^2 + (\curl A)^2 +
\frac{(1-|u|^2)^2}{2\ep^2}\).$$ If $(u,A)$ is clear from the context
and defined on  a set $E$, we will often use the abbreviation
$e_\ep(E)$ for $\int_E e_\ep(u,A)$, and $e_\ep$ for the density
$e_\ep(u,A)$.
We then introduce the measure
$$f_\ep:= e_\ep - \pi\lep \sum_B d_B \delta_{a_B}$$ where the $a_B$'s are the centers of the vortex balls constructed via Jerrard's and Sandier's ball construction,  the $d_B$'s are the degrees of the balls and $\delta$ is the Dirac mass.
Calculating $\int f_\ep$ corresponds to subtracting off the cost of all vortices  from the total energy: what remains should  then correspond to the interaction energy  between the vortices, which we can call ``renormalized energy" by analogy with \cite{bbh}.
In order to obtain next order estimates of the energy $G_\ep$, we show here lower bounds on the energy $\int f_\ep$, as well as coerciveness properties of $f_\ep$, which say, roughly, that $f_\ep $, or in other words, the renormalized energy, suffices to control the  vorticity. (This is again to be compared with the previous ball construction and Jacobian estimate, where the vorticity is controlled  \medskip by $e_\ep/\lep$).

The motivation for this is our joint paper
\cite{papier2} where we establish a ``next order"
$\Gamma$-convergence result for the Ginzburg-Landau energy with
applied magnetic field, and derive a limiting interaction energy
between points in the plane, thus making the link to the question of  the
famous Abrikosov lattice (the Abrikosov lattice is a hexagonal lattice of vortices in superconductors observed in experiments and predicted by Abrikosov).
More precisely, we show in \cite{papier2} an asymptotic expansion for the minimal energy of the form
$$\min G_\ep= G_\ep^N + N \min W+o(N)$$
where $N\gg 1$ is the optimal number of vortices (determined by the intensity of the applied field), $G_\ep^N$ is a constant of order $N^2$ (the leading order estimate) and $W$ is a renormalized energy governing the pattern formed by the vortices after blow-up at the scale $\sqrt{N}$.
Moreover, we show  that the patterns formed by the vortices of minimizers after this blow-up  minimize $W$ (almost surely, in some sense). We prove in addition that among lattice configurations (of fixed volume), $W$ is uniquely minimized by the hexagonal lattice. The natural conjecture is that this lattice  is also a minimizer among all point configurations, and if this were proved, it would completely justify the emergence of the Abrikosov hexagonal lattice.

 To achieve this, with an error only $o(N)$, we needed
 lower bounds
on the cost of vortices with a  precision  $o(1)$
per vortex (with  still a possibly infinite number of vortices), which is finer than
 was available in the literature.  We also needed to control the (local number of) vortices by the renormalized energy. In fact the energy density we end up having to analyze in \cite{papier2} is exactly  $f_\ep$, and we need to be able to control the vortices through it.

  The other problem we need to overcome for \cite{papier2} is that $f_\ep$ is  obviously not positive or even bounded below, and this prevents from applying standard lower semi-continuity ideas, and the abstract scheme for $\Gamma$-convergence of 2-scale energies which we introduce in \cite{papier2}. This reflects the fact that the energy $e_\ep$ is not exactly where the vortices are, as we will explain below.
  The remedy which we implement here, is that we can ``deform" $f_\ep$ into an energy density $g_\ep$ which is bounded below and enjoys nice coerciveness properties.
To accomplish this we show that we  can transport  the positive mass in $f_\ep$ into the support of the negative mass in $f_\ep$,  with mass travelling at most at fixed finite distances   (say distance 1), and so that the result of the operation,  $g_\ep$, is bounded  below.
This is done by using the following rather    elementary transport lemma:
   \begin{lem}\label{transport0} Assume $f$ is a finite  Radon measure on a compact set $A$,  that $\om$ is open  and   that for any positive  Lipschitz function  $\xi$ in $\lipom(A)$, i.e. vanishing on $\om\sm A$,
$$\int \xi \,df\ge - C_0 |\nab\xi|_{L^\infty(A)}.$$
Then there exists a Radon measure  $g$ on $A$ such that  $0 \le g \le f_+ $  and such that
$$\|f-g\|_{\lipom(A)^*}\le C_0.$$
\end{lem}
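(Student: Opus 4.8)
The plan is to reformulate the hypothesis as a statement about duality and then invoke a Hahn–Banach / Riesz-type argument, followed by a measurable-selection-of-transport-plan step. The hypothesis says exactly that the linear functional $\xi \mapsto \int \xi\,df$ is bounded below by $-C_0\|\nabla\xi\|_\infty$ on the cone of nonnegative functions in $\lipom(A)$. I would like to conclude that $f = g + (f-g)$ where $g$ is a nonnegative measure absolutely continuous with respect to $f_+$ and $\|f - g\|_{\lipom(A)^*}\le C_0$. The key reduction: look for $g$ of the form $g = T_\#(\text{some measure} \le f_+)$, i.e.\ $g$ is obtained from $f_+$ by transporting mass onto $\supp f_-$, killing off just enough of $f_+$ so that what is transported cancels $f_-$ at the level of the Lipschitz-dual norm. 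So the natural object is a (sub)transport plan $\pi$ on $A\times A$ with first marginal $\le f_+$, second marginal equal to $f_-$, and I would try to show that the hypothesis is precisely what guarantees such a $\pi$ exists with $\int |x-y|\,d\pi(x,y) \le C_0$; then $g := f_+ - (\text{first marginal of }\pi)$ works, because $f - g = f_- - (\text{first marginal killed})$... wait, more carefully: $f = f_+ - f_-$, and I set $g = f_+ - \mu$ where $\mu \le f_+$ is the first marginal of $\pi$; then $f - g = \mu - f_-$, which is $(\text{first marginal of }\pi) - (\text{second marginal of }\pi)$, a balanced signed measure whose Kantorovich–Rubinstein norm is $\le \int|x-y|\,d\pi \le C_0$, and the Kantorovich–Rubinstein norm dominates (in fact for balanced measures equals, by Kantorovich duality) the $\lipom(A)^*$ norm. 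The $\lipom$ (vanishing outside $\om$) subtlety means transport must happen within the closure of $\om$ where it matters, but since test functions vanish on $\om\sm A$ this is automatically compatible.

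The steps, in order. First, set up the Kantorovich–Rubinstein duality: for finite measures $\mu,\nu$ on $A$ with $\mu(A)=\nu(A)$, one has $\sup\{\int\xi\,d(\mu-\nu) : \|\nabla\xi\|_{L^\infty(A)}\le 1\} = \min_\pi \int |x-y|\,d\pi$ over couplings $\pi$, and this is exactly the flat/$\lipom(A)^*$-type norm of $\mu-\nu$. Second, reduce the existence of a good subplan to a min-cost-flow feasibility statement: I want $\mu\le f_+$, $\mu(A)=f_-(A)$ (note $f(A)=\int 1\,df \ge 0$ by hypothesis applied to $\xi\equiv1$, so $f_+(A)\ge f_-(A)$, which makes this feasible in principle), and a coupling $\pi$ of $\mu$ and $f_-$ with $\int|x-y|\,d\pi\le C_0$. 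Third — and this is where the hypothesis enters decisively — show the minimal such transport cost is $\le C_0$. By an LP-duality / minimax argument (Kantorovich duality for the partial transport problem, or a direct Hahn–Banach separation in the space of measures on $A\times A$), the optimal value of the partial-transport problem equals a sup over pairs of potentials; one checks that the admissible potentials for this partial problem correspond exactly to $1$-Lipschitz functions $\xi$ with $\xi\ge 0$ (the one-sided constraint coming from the ``sub''-plan, i.e.\ we are allowed to leave some $f_+$ mass in place, which forces the potential conjugate to $f_+$ to be $\le 0$ — equivalently $-\xi\le 0$, i.e.\ $\xi\ge 0$, after the sign conventions are sorted out so that transporting onto $f_-$ pairs against $-\int\xi\,df_-$ and leaving $f_+$ pairs against nonpositivity). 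Then the hypothesis $\int\xi\,df\ge -C_0\|\nabla\xi\|_\infty$, i.e.\ $\int\xi\,df_- - \int\xi\,df_+ \le C_0$ for $\xi\ge0$ $1$-Lipschitz, is literally the assertion that this dual sup is $\le C_0$, hence so is the primal min-cost.

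I expect the main obstacle to be handling the one-sided (partial / sub-marginal) nature of the transport cleanly, i.e.\ getting the Kantorovich duality for the ``$\mu \le f_+$, leftover mass allowed'' problem with the correct identification that the dual variable is a \emph{nonnegative} Lipschitz function — the nonnegativity constraint in the hypothesis is the whole point and must emerge naturally from the LP dual rather than be imposed by hand. A secondary technical point is measurability/closedness: ensuring the infimum over couplings is attained (weak-$*$ compactness of couplings with bounded first marginal on the compact set $A$, plus lower semicontinuity of $\pi\mapsto\int|x-y|\,d\pi$) so that one actually produces the measure $g$ rather than an approximating sequence; and then reading off $0\le g\le f_+$ and the norm bound, which are then immediate. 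The $\lipom$ bookkeeping (functions vanishing on $\om\sm A$) should be a harmless restriction of the test class that only makes the hypothesis weaker and the conclusion correspondingly about the $\lipom(A)^*$ norm, so I would carry the whole argument with that test class throughout rather than first proving it for all Lipschitz functions.
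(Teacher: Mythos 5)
Your proposal is, at bottom, the same duality argument the paper uses, but phrased through optimal transport rather than abstract convex analysis, and that phrasing is exactly what creates the obstacle you flag without resolving. The paper applies Fenchel--Rockafellar on $C(A)$ to $\varphi(\xi)=\int\xi_+\,df_+$ and $\psi(\xi)=-\int\xi\,df$ restricted to $\{\xi\in\lipom(A):\;|\nabla\xi|_{L^\infty}\le1\}$ (the constraint being encoded by $\psi=+\infty$ outside this set). Then $\varphi^*$ is the indicator of $\{0\le \mu\le f_+\}$, which immediately produces a measure $g$ with $0\le g\le f_+$ as the maximizer, $\psi^*(\mu)=\|\mu+f\|_{\lipom(A)^*}$ by definition, and a short computation reduces the primal infimum to $\inf\int\xi_-\,df$, which the hypothesis bounds by $-C_0$. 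No transport plan is ever constructed, and the $\lipom$ constraint is handled automatically because it is baked into $\psi$.

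Your version goes further: you want an explicit partial plan $\pi$ with $\pi_1\le f_+$, $\pi_2$ tied to $f_-$, and $g=f_+-\pi_1$. Your identification of the dual of the sub-marginal constraint is correct: after a $c$-transform one may take the potentials of the form $(-\xi,\xi)$ with $\xi\ge0$ and $1$-Lipschitz, which is exactly the class appearing in the hypothesis. But the duality you then invoke is not the standard Kantorovich--Rubinstein theorem, and the two complications are genuinely entangled, not mere bookkeeping. Standard Kantorovich duality ranges over \emph{all} $1$-Lipschitz potentials, whereas your hypothesis only controls those in $\lipom(A)$, i.e.\ vanishing on $\om\setminus A$. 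To make the dual range over $\lipom(A)$ you must enlarge the primal: allow mass of either sign to be dumped on $\om\setminus A$ at cost equal to the distance (a ``Dirichlet boundary'' transport problem). Once you do that, the balanced requirement $\pi_2=f_-$ you impose is no longer the right formulation, since part of $f_-$ may optimally be served by the boundary rather than by $f_+$. You would then need to state and prove strong duality for this partial-transport-with-boundary problem and verify attainment; this is exactly the content of the paper's Fenchel--Rockafellar step, just rewrapped. So the gap is concrete: you have named the required duality but not established it, and the $\lipom$ constraint cannot be carried along for free as you suggest. What your route would buy, if completed, is an explicit description of $f-g$ as a difference of marginals of a plan; what the paper's route buys is that $g$ falls out of the conjugate $\varphi^*$ directly, with no need to decide how the mass actually moves.
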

Thus what is needed is a control on the negative part of $f_\ep$, which will be provided by  the ball construction lower bounds and  additional improvements of it.

The norm $\|f_\ep - g_\ep\|_{\lipom(\om)^*}$ will measure how far  mass has been displaced in the process. This control appears in Theorem \ref{th1} below and more particularly Corollary \ref{coro1}.
Since $\int g_\ep$ will be close to $\int f_\ep$, it also can be seen as a renormalized energy.
  Since $g_\ep$ is bounded below, we can then hope that it enjoys nice coerciveness properties, we can in fact
   obtain the desired  compactness results which allow to control  the vorticity  locally by $g_\ep$. This will be the object of Theorem  \ref{th2} below.

Finally, let us point out that our results can in principle serve to obtain  lower bounds for weighted Ginzburg-Landau \medskip energies, see Remark \ref{rem1.4}.

Let us now describe a little bit the  method that we use,  which will allow to control the negative part of $f_\ep$ as needed.

 The best  vortex ball construction lower bound  on $e_\ep$ available (such as that  in \cite{livre} Chap. 4) is of the following type: given $(u_\ep, A_\ep)$ and any (small) number $r$,  there exists a family of disjoint closed balls $\mathcal{B}$ covering all the zeros of $u_\ep$, the sum of the radii of the balls being bounded above by $r$, and such that
\begin{equation}\label{bornancienne}
\int_{\cup_{B \in \mathcal{B} }   B} e_\ep(u_\ep,A_\ep) \ge \pi D\( \log
\frac{r}{ \ep D }- C\),
\end{equation}
 where $D=  \sum_{B\in \mathcal{B}    } |d_B|$ with $d_B = \deg(u_\ep, \p B) $ if $B\subset \om$ and $0$ otherwise.
We shall reprove here in Proposition \ref{jerr} a   version of this result using Jerrard's ball construction.

This above estimate says that
  a vortex of degree $d$ costs an energy  at least $\simeq \pi |d|\lep$, but  this is only really true when the vortex is well isolated from other vortices and from the boundary, and if there are not too many of them locally, as the factor $r/D$ in the logarithm above somewhat reflects:  an ideal lower bound would be
$$ e_\ep(B) \ge \pi |d_B| \( \log \frac{r}{\ep}- C\),$$
 and compared to this, the lower bound above contains a negative error $- \pi D\log D$  which tends to $- \infty$ if the total number of vortices becomes large when $\ep \to 0$. In truth, this ideal lower bound cannot hold in general  as  can be seen in the case of $n$ vortices of degree $1$ all  positioned regularly near the boundary of the domain, a case where \eqref{bornancienne} is optimal.

    Moreover the energy density $e_\ep$ is not localized exactly where the vortices are: vortices can be viewed as points, while their energy is spread over annular regions around these points.
    The ball construction lower bounds such as  \eqref{bornancienne}   capture well the energy which lies very near the vortices, but some energy is missing from it, in particular when vortices accumulate locally around a point.
     The missing energy in that case can be recovered by
   the method of ``lower bounds on annuli"  which we introduced in \cite{ss0} and re-used  in \cite{livre}, Chap. 9. It is based on  the following: let $B(x_0, r_1)\backslash B(x_0, r_0)$ be an annulus which contains no
   zeros of $u$, roughly speaking we have
 $$e_\ep\(B(x_0, r_1) \backslash B(x_0, r_0) \)   \ge \pi
 D^2 \log \frac{r_1}{r_0}$$ where $D= \deg(u, \p B(x_0, r_1)) = \deg(u, \p B(x_0, r_0))$.
 In other words, if a fixed size ball
  in the domain contains some large degree $D$ of vorticity, then there is an energy  of order $D^2$ lying not in that ball, but in a  thick enough annulus around that ball. This energy  of order $D^2$  should suffice to ``neutralize" the error term  $-\pi D \log D$ found above through the ball construction.
   However, it lies at a certain (finite) distance from the center of the vortices.
      The main technique  is then to combine
in a systematic way the ball construction lower bounds and the ``lower bounds on annuli", in order to recover enough energy.

   Let us finally emphasize a technical difficulty. Since we want a local control on the vortices,
 the lower bound \eqref{bornancienne} provided by the ball construction is not quite sufficient because it  cannot be localized in general, i.e. we cannot deduce a bound for $\int_B e_\ep$ for each $B\in \mathcal{B}$. It is only possible when a matching upper bound on the  total in (\ref{bornancienne}) is known,
see Proposition \ref{jerr} for more details.

 The idea to remedy this difficulty is
 to ``localize" the construction, i.e. split the domain into pieces on which one expects to have a bounded vorticity, then apply the ball construction on each piece, and paste together the constructions and lower bounds obtained this way, whose error terms will now be bounded below by a constant. However, this is not completely easy:
one needs to localize the construction and still get a global covering of the vortices by balls while preserving the disjointness of the balls.
  In applications, trying to split the domain
 into pieces where  the vorticity is expected to be bounded
  leads us to splitting the domain into very small (as $\ep \to 0$)
   pieces. Equivalently  after rescaling one can consider
    very large domains cut into bounded size pieces.
    In other words, in order
     to be able to treat the case where the vortex density becomes large, we need to be able to treat the case of unbounded domains as $\ep \to 0$.

 This is precisely what we do in this paper: we consider possibly large
 domains.  This way we may in practice rescale our domains as
   much as needed  until the local density of vortices remains bounded as $\ep \to 0$.  We consider vortex ball constructions obtained over coverings of $\om_\ep$ by domains of fixed size, and we work at pasting together these lower bounds while combining them  with
   the method of lower bounds on annuli, as explained above, and finally retrieving ``finite numbers of vortices" estimates (of \cite{bbh} type) which bound from below  the energy  $f_\ep$ or $g_\ep$ by  the exact renormalized energy of \cite{bbh} type up to only $o(1)$ errors.

\section{Statement of the main results}

In this paper we will deal with families
 ${(u_\ep,A_\ep)}_\ep$ defined on domains $\{\om_\ep\}_\ep$
 in $\mr^2$ which become large as $\ep\to 0$. The example we have in mind is $\om_\ep = \lambda_\ep \om$ where $\om$ is a fixed bounded smooth domain and $\lambda_\ep\to +\infty$ as $\ep\to 0$, but we  don't need to make any particular  hypothesis on $\{\om_\ep\}_\ep$, which could even be a fixed bounded domain.

Next we introduce some notation.

For $E\subset\mr^2$ we let
$$\widehat{E}= \{x\in \om_\ep, \dist(x, E) \le 1\}.$$
We also define, for any real-valued or vector-valued  function $f$
in $\om_\ep$,
$$\widehat f(x) = \sup\{|f(y)|, y\in B(x,1)\cap\om_\ep\}.$$
Note that both $\widehat f$ and $\HE$ depend on $\ep$, but the value of $\ep$ will be clear from the context. The choice of $1$ in the definitions is arbitrary but constrains the choice of other constants below.

In all the paper, $f_+ $ and $f_- $ will denote the positive and
negative parts of a function or measure, both being positive
functions or measures, and $\|f\|$ is the total variation of $f$.
If $f$ and $g$ are two measures then
 $f \le g$ means  that $g-f$ is a positive measure.

Given a family $\{(u_\ep,A_\ep)\}_\ep$, where $u_\ep:\om_\ep\to\mc$ and $A_\ep:\om_\ep\to\mr^2$ we define the {\em currents} and {\em vorticities} to be
$$j_\ep= (iu_\ep, \nab_{A_\ep} u_\ep),\quad \mu_\ep = \curl j_\ep + h_\ep,$$
where  $(a,b)= \hal (a\bar{b}+ \bar{a}b)$ and $h_\ep = \curl A_\ep$ is the {\em induced magnetic
field .}

We denote by $\lipom(A)$ the set of Lipschitz functions on $A$ which are $0$ on $\om\sm A$, and  let
$\|f\|_{\lipom(A)^*} = \sup \int
\xi\,df$, the supremum being taken over functions $\xi\in\lipom(A)$ such that $|\nab \xi|_{L^\infty(A)} \le 1$.

We say a family $\{f_\a\}_{\a}$ is subordinate to a cover $\{A_\a\}_{\a}$ if $Supp(f_\a) \subset A_\a$ for every $\a.$

Despite the slightly confusing notation, the covering $A_\a$ will have nothing to do with the magnetic gauge $A_\ep$. Also, the densities $f_\a $ and $g_\a$, as well as $n_\a$ and $\nu_\a$ will implicitly depend on $\ep$, and should be really $f_{\ep, \a}$ and $g_{\ep, \a}$, etc,  but for simplicity we do not indicate this dependence.

\begin{theo} \label{th1} Let $\{\om_\ep\}_{\ep>0}$ be a family
of bounded open sets in $\mr^2$. Assume that    $\{(u_\ep, A_\ep)\}_\ep$, where $(u_\ep,A_\ep)$ is defined over $\om_\ep$,  satisfies  for some  $0<\beta<1$ small enough
\begin{equation}\label{bornenrj}
G_\ep(u_\ep,A_\ep)  \le \ep^{-\beta}.\end{equation}
Then the following holds, for $\ep$ small enough:
\begin{enumerate}
\item (Vortices) There exists a measure $\nu_\ep$, depending only on
$u_\ep$ (and not on $A_\ep$) of the form $2\pi \sum_i d_i
\delta_{a_i}$ for some points $a_i \in \om_\ep$ and some integers
$d_i$ such that, $C$ denoting a generic constant independent of
$\ep$,
\begin{equation}\label{estjac}
\|\mu_\ep - \nu_\ep\|_{(C^{0,1}_0(\om_\ep))^*} \le
 C \sqrt\ep G_\ep(u_\ep, A_\ep),\end{equation}
 and for any measurable set $E$
$$|\nu_\ep|(E)\le C  \frac{e_\ep(\HE)}\lep .$$

\item (Covering) There exists a cover $\{A_\a\}_\a$ of $\om_\ep$ by open sets with
 diameter and  overlap number bounded by a universal constant, and measures
  $\{\fa\}_\a$, $\{\nua\}_\a$ subordinate to this cover such that,
  letting  $f_\ep := e_\ep - \dl\nu_\ep$ ,
$$f_\ep  \ge \sum_\a \fa ,\quad  \nuep  = \sum_\a  \nua, \qquad \nu_{\a_1} \perp \nu_{\a_2} \ \text{for } \ \a_1\neq \a_2.$$

\item (Energy transport) Letting  $\naep := \|\nua\|/2\pi$,
for  each $\a$  the following holds:
If $\dist(A_\a,{\om_\ep}^c)>\ep $ there exists a measure  $\ga\ge - C $  such that   either
\begin{equation}\label{cas1}
\|\fa - \ga\|_{\lipom(A_\a)^*}\le C \naep\( 1 + \beta\lep\) \quad \text{and}\quad \ga(A_\a)\ge c \naep\lep,
\end{equation}
or
\begin{equation}\label{cas2}
\|\fa - \ga\|_{\lipom(A_\a)^*} \le C \naep\(1+\log\naep\) \quad \text{and}\quad \ga(A_\a)\ge c \naep^2 - C \naep,
\end{equation}
where  and $c, C>0 $ are positive universal constants.

If $\dist(A_\a,{\om_\ep}^c)\le \ep $ there exists $\ga\ge 0$  such that for any function $\xi$
\begin{equation}\label{casbord}
\int  \xi \,d(\fa - \ga) \le  C \naep\(|\nab \xi|_{L^\infty(A_\a)} + \beta  \lep|\xi|_{L^\infty(A_\a)}\).
\end{equation}

\item (Properties of $g_\ep$) Letting $g_\ep =  f_\ep + \sum_\a(\ga -\fa) $ it holds that
\begin{equation}\label{minmajgh}-C \le   g_\ep \le e_\ep + \dl (\nu_\ep)_-, \end{equation}
and for any measurable set $E\subset \om_\ep$,
\begin{equation}\label{gmoins}(g_\ep)_-(E)  \le  C\frac{e_\ep(\HE)}{\lep},\quad \gp(E)\le  C e_\ep(\HE).\end{equation}
Moreover, assuming   $|u_\ep|\le 1$ in $\om_\ep$ and that $E+B(0,C)\subset\om_\ep$, for some $C>0$ large enough, then  for every $p<2$,
\begin{equation}\label{prosurj} \int_{E} |j_\ep|^p\le C_p \(\gp(E+B(0,C)) + |E|\).\end{equation}
\end{enumerate}
\end{theo}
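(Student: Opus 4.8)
\emph{Overall strategy.} The plan is to construct $\nu_\ep$, the covering, and the $\ga$ by combining Jerrard's ball construction (Proposition~\ref{jerr}), a localized version of it, the ``lower bounds on annuli'' of \cite{ss0}, and the transport Lemma~\ref{transport0}. For part~(1), I would run the ball construction for $u_\ep$ at a scale $r=r(\ep)\to0$ chosen small against \eqref{bornenrj}: this produces disjoint closed balls $\mathcal B$ covering the zeros of $u_\ep$, with $\sum_B\mathrm{rad}(B)\le r$, depending only on $u_\ep$; set $\nu_\ep:=2\pi\sum_{B\in\mathcal B}d_B\delta_{a_B}$, $d_B=\deg(u_\ep,\partial B)$. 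Then \eqref{estjac} is exactly the Jacobian/vorticity estimate (Jerrard--Soner \cite{js}, see also \cite{livre}, Chap.~6) for this $r$. For the mass bound, I would apply the ball construction lower bound to the subfamily of balls with centres in $E$ (these lie in $\HE$ since $r\ll1$): this gives $e_\ep(\HE)\ge\pi D(\lep-C-\log D)$ with $D:=|\nu_\ep|(E)/2\pi$; since \eqref{bornenrj} forces $\log D\le\beta\lep$, for $\beta$ small the bracket exceeds $\tfrac12\lep$, which rearranges to $|\nu_\ep|(E)\le Ce_\ep(\HE)/\lep$.

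\emph{The localized estimate (parts (2)--(3)).} This is the heart of the proof. I would cover $\om_\ep$ by sets $A_\alpha$ of a fixed size with bounded overlap, assign each $B\in\mathcal B$ to one $A_\alpha$ (the one containing $a_B$, the grid shifted so that no $B$ straddles a boundary), and set $\nua:=2\pi\sum_{B\to\alpha}d_B\delta_{a_B}$ (so $\nu_\ep=\sum_\alpha\nua$, the $\nua$ are mutually singular, $\naep=\sum_{B\to\alpha}|d_B|$) and $\fa:=\1_{U_\alpha}e_\ep-\dl\nua$, where the $U_\alpha\subset A_\alpha$ are \emph{disjoint}, each containing the cluster of balls assigned to $\alpha$ together with a fixed annular collar around it; disjointness of the $U_\alpha$ gives $f_\ep-\sum_\alpha\fa=(1-\sum_\alpha\1_{U_\alpha})e_\ep\ge0$. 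For an interior cube, let $s_\alpha:=\sum_{B\to\alpha}d_B$; combining the localized ball construction with a ``lower bound on an annulus'' for a vortex-free annulus around the cluster gives $e_\ep(U_\alpha)\ge\pi\naep\lep-\pi\naep\log\naep-C\naep+\pi\kappa s_\alpha^2$ for a fixed $\kappa>0$ (the logarithmic width of the annulus, which can be made large by taking the cubes small), hence $\fa(A_\alpha)\ge\pi\lep(\naep-s_\alpha)-\pi\naep\log\naep-C\naep+\pi\kappa s_\alpha^2$. Using \eqref{bornenrj}, $\log\naep\le\beta\lep$. If a definite fraction of $\naep$ comes from cancelling signs, i.e. $s_\alpha\le(1-\delta)\naep$, then $\naep-s_\alpha\ge\delta\naep$, the first term dominates (as $\beta<\delta$), and $\fa(A_\alpha)\ge c\naep\lep$ --- this is case \eqref{cas1}; otherwise $s_\alpha>(1-\delta)\naep$, the term $\pi\kappa s_\alpha^2\sim\naep^2$ dominates, and $\fa(A_\alpha)\ge c\naep^2-C\naep$ --- this is case \eqref{cas2}. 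In both, the hypothesis of Lemma~\ref{transport0} holds for $\fa$ on $A_\alpha$ with $C_0$ the constant of \eqref{cas1}/\eqref{cas2} (it is the $\naep\log\naep\le\naep\beta\lep$ error that produces exactly these), and the lemma yields $\ga$ with $0\le\ga\le(\fa)_+$, $\|\fa-\ga\|_{\lipom(A_\alpha)^*}\le C_0$, and $\ga(A_\alpha)\ge\fa(A_\alpha)-C_0$ (test against a fixed bump constant on most of $A_\alpha$); combining with the bound on $\fa(A_\alpha)$ gives the asserted lower bound on $\ga(A_\alpha)$. A cube within distance $\ep$ of $\om_\ep^c$ is treated the same way but retaining only $(\fa)_+$, which gives the one-sided inequality \eqref{casbord}.

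\emph{Assembly (part (4)).} Since $g_\ep=f_\ep+\sum_\alpha(\ga-\fa)=(1-\sum_\alpha\1_{U_\alpha})e_\ep+\sum_\alpha\ga$ (using $\sum_\alpha\nua=\nu_\ep$), the lower bound in \eqref{minmajgh} is immediate: $1-\sum_\alpha\1_{U_\alpha}\ge0$ and $\sum_\alpha\ga\ge-C$ by the bounded overlap of the cover together with each $\ga\ge-C$; the upper bound follows from $\ga\le(\fa)_+=\1_{U_\alpha}e_\ep+\dl(\nua)_-$. Summing these pointwise bounds and inserting the part~(1) mass bound yields \eqref{gmoins}: $(g_\ep)_-\le\sum_\alpha(\ga)_-$ lives only on the cubes with $\naep\ge1$, whose number is $\le Ce_\ep(\HE)/\lep$, while $\gp\le2e_\ep+\dl(\nu_\ep)_-$. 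Finally, for \eqref{prosurj} I would use $|j_\ep|^2\le2e_\ep$ (valid as $|u_\ep|\le1$) and split $E$ into the vortex balls and their complement: off the balls, $|u_\ep|$ is bounded below so $e_\ep$ controls $\tfrac12|j_\ep|^2$ and coincides there with $(f_\ep)_+$, which the transport does not carry away, so this part is bounded by $\gp$ on a fixed enlargement; on a ball $B(a_i,\rho)$, $\int|j_\ep|^p$ is bounded via Hölder by $C_p|d_i|^p$, and $\sum_{a_i}|d_i|$ near $E$ is bounded by $\gp(E+B(0,C))+|E|$ by inverting the lower bounds \eqref{cas1}--\eqref{cas2} cube by cube (with a Cauchy--Schwarz step for \eqref{cas2}); a last Hölder inequality produces the $|E|$ term.

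\emph{Main obstacle.} The genuinely delicate point is the localization underlying parts~(2)--(3): one must cut $\om_\ep$ into fixed-size pieces, run the ball construction on each, and still end up with \emph{disjoint} balls that \emph{globally} cover all the zeros, attributing to each piece a collar of annular energy without double counting and ensuring that the annulus used for a given piece is vortex-free --- a requirement that couples neighbouring pieces. Arranging this so that the per-piece error is bounded \emph{below} by $C\naep(1+\beta\lep)$ rather than degenerating as the local vorticity grows, and so that every configuration falls into \eqref{cas1} or \eqref{cas2}, is where the real work lies; by comparison, the assembly and the estimates on $g_\ep$ are essentially soft once the localized estimate is established.
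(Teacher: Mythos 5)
The overall architecture you describe --- localized ball construction, annulus lower bounds from \cite{ss0}, transport lemma, dichotomy between $c\,\naep\lep$ and $c\,\naep^2$ --- is the right skeleton and matches the paper's. However, there is a genuine gap at the point where you invoke Lemma~\ref{transport0}. You write that ``the hypothesis of Lemma~\ref{transport0} holds for $\fa$ on $A_\alpha$ with $C_0$ the constant of \eqref{cas1}/\eqref{cas2}'', but this is precisely the hard part and cannot be read off from the integrated bound $\fa(A_\alpha)\ge c\naep\lep$ (resp.\ $c\naep^2-C\naep$). The hypothesis of the transport lemma is \emph{distributional}: one must show $\int\xi\,d\fa\ge -C_0\,|\nab\xi|_\infty$ for every positive Lipschitz $\xi$, including $\xi$ sharply peaked at a vortex point. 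Since $\fa$ contains a term $-\dl\,\nua$ (Dirac masses of size $\sim\lep$) and the compensating energy $e_\ep$ is spread over nested annuli around each vortex, a narrow test bump of height $h$ and width $w\ll 1$ (so $|\nab\xi|_\infty\sim h/w$) would see $\int\xi\,d\fa\approx -\dl\,|d|\,h + e_\ep(B(a,w))\,h$. To conclude that this is $\ge -C\,|d|\,(h/w)$ you need $e_\ep(B(a,w))\ge \pi|d|\log(w/\ep\cba)$, i.e.\ the ball-construction lower bound \emph{at every intermediate scale $w$}, assembled into the layer-cake/co-area estimate. This is exactly the content of the paper's Proposition~\ref{lemgood}: it integrates the per-ball lower bound over the level sets $E_t=\{\xi\ge t\}$, using the monotone family $\Brep$, to produce $\int\xi\,d\fepB\ge -C|\nab\xi|_\infty\,|\nuep|(B)$. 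Without this step the transport lemma yields an error $O(\lep)$ per vortex, not $O(1+\log\naep)$ or $O(1+\beta\lep)$, and the whole estimate collapses. Your $\fa:=\1_{U_\alpha}e_\ep-\dl\nua$, fed directly to Lemma~\ref{transport0}, therefore does not yield \eqref{cas1}--\eqref{cas2}.

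Two secondary points. First, the ``grid shifted so no ball straddles a boundary'' does not work in general: the balls produced by the construction at scale $r$ have total radius $\le r$ but can be anywhere, and a single shift cannot simultaneously avoid all of them; the paper instead applies the construction separately in each $U_\alpha$ and carefully prunes to a globally disjoint family (requiring $\rho\le\ell/(8m)$ and a Lebesgue-number argument), while retaining the crucial property that each retained ball lies well inside some $U_\alpha$. Second, your annulus-bound constant $\kappa$ cannot be ``made large by taking the cubes small''; smaller cubes give thinner annuli, not thicker, and in any case the constant is fixed. Relatedly, the formula $e_\ep(U_\alpha)\ge\pi\naep\lep-\pi\naep\log\naep-C\naep+\pi\kappa s_\alpha^2$ mixes the per-ball and per-annulus estimates into a single case, whereas the paper's dichotomy hinges on the sign decomposition ($D_0^+,D_1^-$): when many balls have negative degree the $\lep$ gain comes from $e_\ep(B)-\dl\nuep(B)\ge e_\ep(B)$ alone, and the annulus bound is only needed in the remaining case. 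Finally, note that the per-ball lower bound used for Part~(1) also is not an immediate consequence of the global bound \eqref{bornancienne}: it requires the dichotomy of Proposition~\ref{jerr} together with the a priori bound $G_\ep\le\ep^{-\beta}$ to rule out the ``bad'' alternative (this is Lemma~\ref{lembornepetits}).
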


The third item admits, or rather implies the following form, from which the covering $\{A_\a\}_\a$ is hidden.

\begin{coro}\label{coro1} Under the hypothesis above and using the same notation, for every $0<\eta\le 1$ we have if $\ep>0$ is small enough: First, for every Lipschitz function $\xi$ vanishing on $\p \om_\ep$
\begin{equation}\label{err}
\int_{\om_\ep} \xi \,d(g_\ep - f_\ep) \le  C\int_{\om_\ep}
\widehat{\nab \xi}\, \left[ d|\nu_\ep| +    (\beta+\eta) \,d\gp  +
\frac{|\log\eta|^2}{\eta}\,dx \right] + C\beta  \int_{\widehat{ \p
\om_\ep }} \widehat{\xi}    e_\ep.
\end{equation}
Second, if $d(E,\p\om_\ep)>C$ then
\begin{equation}\label{controlenu}
|\nu_\ep|(E)\le C\(\eta \gp(\HE) + \frac1\eta  |\HE| + \frac{e_\ep(\HE\cap\widehat{ \p \om_\ep })}{\lep} \).
\end{equation}
\end{coro}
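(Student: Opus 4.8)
The plan is to deduce both inequalities from item (3) of Theorem \ref{th1} by summing the estimates \eqref{cas1}, \eqref{cas2}, \eqref{casbord} over the cover $\{A_\a\}_\a$ and repackaging the right-hand sides into an $\ep$-independent, cover-free form. First, for \eqref{err}, observe that $g_\ep - f_\ep = \sum_\a(\ga - \fa)$, so for a Lipschitz $\xi$ vanishing on $\p\om_\ep$ one has $\int \xi\,d(g_\ep-f_\ep) = \sum_\a \int \xi\,d(\fa-\ga)$ with a sign flip. For each $\a$ with $\dist(A_\a,\om_\ep^c)>\ep$, I would not apply \eqref{cas1} or \eqref{cas2} to $\xi$ directly but to $\xi - \bar\xi_\a$ where $\bar\xi_\a$ is the average of $\xi$ on $A_\a$ (legitimate since $\ga-\fa$ has bounded, though not necessarily zero, mass — here one uses $\ga(A_\a)\ge 0$-type control together with $\|\fa-\ga\|_{\lipom(A_\a)^*}$ bounds, or more carefully splits $\int\xi\,d(\fa-\ga)$ into $\int(\xi-\bar\xi_\a)\,d(\fa-\ga) + \bar\xi_\a(\fa-\ga)(A_\a)$ and controls the second term by $|\xi|_{L^\infty}$ times the mass bound). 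On $A_\a$ one has $|\xi-\bar\xi_\a|_{L^\infty(A_\a)}\le \diam(A_\a)|\nab\xi|_{L^\infty(A_\a)} \le C\,\widehat{\nab\xi}$ at any point of $A_\a$ (using the universal diameter bound and the definition of $\widehat{\nab\xi}$), so the terms $C\naep(1+\beta\lep)$ and $C\naep(1+\log\naep)$ get multiplied by $\widehat{\nab\xi}$.

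The key repackaging step is to absorb the factors of $\naep$, $\naep\log\naep$, and $\naep\beta\lep$ into integrals against $d|\nu_\ep|$, $d\gp$, and $dx$. Since $\naep = \|\nua\|/2\pi$ and the $\nua$ are mutually orthogonal with $\nu_\ep = \sum_\a\nua$, summing $\naep$ over $\a$ produces (up to the overlap number) $\int d|\nu_\ep|$ localized appropriately — this handles the $C\naep$ and $C\naep\beta\lep$ contributions, giving the $d|\nu_\ep|$ term and part of the $\beta\,d\gp$ term after using $|\nu_\ep|(E)\le C e_\ep(\HE)/\lep \le C\gp(\cdot)$-type bounds from \eqref{gmoins}. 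The genuinely delicate case is \eqref{cas2}, where the error $C\naep(1+\log\naep)$ is superlinear in $\naep$: one cannot bound $\sum_\a \naep\log\naep$ by $\sum_\a\naep$. Here is where the parameter $\eta$ enters. I expect one uses a Young-type inequality of the form $\naep\log\naep \le \eta\,\naep^2 + \frac{|\log\eta|^2}{\eta}\cdot C$ (or $\naep(1+\log\naep)\le \eta\,\naep^2 + C\frac{|\log\eta|^2}{\eta}$), so that the $\eta\naep^2$ part is absorbed into $\gp(A_\a)$ — because in case \eqref{cas2} we have $\ga(A_\a)\ge c\naep^2 - C\naep$, hence $\naep^2 \le C(\ga(A_\a)+\naep) \le C(\gp(\widehat A_\a) + \naep)$ — while the $\frac{|\log\eta|^2}{\eta}$ part, summed over the bounded-overlap cover, is bounded by $\frac{|\log\eta|^2}{\eta}|\om_\ep|$-locally, i.e. by $\int \frac{|\log\eta|^2}{\eta}\,dx$ against $\widehat{\nab\xi}$. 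The boundary terms $A_\a$ with $\dist(A_\a,\om_\ep^c)\le\ep$ are handled by \eqref{casbord}: these $A_\a$ all lie within bounded distance of $\p\om_\ep$, so after summing, the $|\nab\xi|_{L^\infty}$ and $\beta\lep|\xi|_{L^\infty}$ terms, each weighted by $\naep$ and hence (via \eqref{gmoins}/the vorticity bound) by $e_\ep$ near $\p\om_\ep$, collapse into $C\beta\int_{\widehat{\p\om_\ep}}\widehat\xi\,e_\ep$ plus a contribution to the $\widehat{\nab\xi}\,d|\nu_\ep|$ term.

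For the second inequality \eqref{controlenu}: when $d(E,\p\om_\ep)>C$, every $A_\a$ meeting $E$ satisfies $\dist(A_\a,\om_\ep^c)>\ep$, so only \eqref{cas1}–\eqref{cas2} are in play for those $\a$. I would estimate $|\nu_\ep|(E)\le \sum_{\a:\,A_\a\cap E\neq\emptyset} \|\nua\| = 2\pi\sum\naep$ and split the sum according to which alternative holds: for $\a$ in case \eqref{cas1}, $\naep\lep \le C\ga(A_\a) \le C\gp(\widehat A_\a)$, so $\naep \le \frac{C}{\lep}\gp(\widehat A_\a)$, contributing the $\frac{e_\ep(\HE\cap\widehat{\p\om_\ep})}{\lep}$-type term is not even needed here — wait, more precisely these contribute $\le \frac{C}{\lep}\gp(\HE)$, which for small $\ep$ is $\le C\eta\,\gp(\HE)$; for $\a$ in case \eqref{cas2}, $\naep \le \eta\naep^2/1 \cdot$ — rather, $\naep \le \eta c^{-1}(\ga(A_\a)+C\naep)$, absorb the $\naep$ for $\eta$ small, giving $\naep \le C\eta\,\gp(\widehat A_\a) + $ (lower-order) which after summation and a cruder bound using $\frac1\eta|\HE|$ to kill leftover constants yields $C(\eta\gp(\HE) + \frac1\eta|\HE|)$. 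The residual $\frac{e_\ep(\HE\cap\widehat{\p\om_\ep})}{\lep}$ term appears only to cover the borderline $\a$'s for which the distance-to-boundary condition $\dist(A_\a,\om_\ep^c)>\ep$ holds but $\widehat A_\a$ reaches $\widehat{\p\om_\ep}$, where one falls back on the Jacobian-type bound $|\nu_\ep|\le Ce_\ep(\HE)/\lep$ from item (1) instead. The main obstacle, I expect, is the bookkeeping in the superlinear case \eqref{cas2}: getting the Young inequality constants to line up so that the absorbed term is genuinely controlled by $\gp$ on the enlarged set $\HE$ (not just on $\bigcup A_\a$) and so that the overlap number only costs a universal constant, while simultaneously producing exactly the stated $\frac{|\log\eta|^2}{\eta}\,dx$ weight — everything else is routine summation over a bounded-overlap cover.
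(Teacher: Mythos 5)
Your plan tracks the paper's proof closely on the main points: the decomposition $\int\xi\,d(g_\ep-f_\ep)=\sum_\a\int\xi\,d(\ga-\fa)$, the Young-type inequality $x\log x\le\eta x^2+C|\log\eta|^2/\eta$ to absorb the superlinear $\naep\log\naep$ term from \eqref{cas2} into $\gp$, the use of $\ga\le g_\ep+C$ (the paper's \eqref{gage}, which does follow from $g_\ep=f_\ep+\sum_\a(\ga-\fa)$, $f_\ep\ge\sum_\a\fa$ and $\ga\ge -C$) to get $\naepp\lesssim\gp(A_\a)+\naep$ and $\naep\lep\lesssim\gp(A_\a)+1$, the boundary-case handling via \eqref{casbord} together with $\naep\lep\le Ce_\ep(A_\a)$, and the summation over a bounded-overlap cover. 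For \eqref{controlenu}, your derivation via $\min(\naepp,\naep\lep)\le C(\gp(A_\a)+1)$ and $2x\le\eta x^2+1/\eta$ is exactly what the paper does. One small misreading: the term $\frac{e_\ep(\HE\cap\widehat{\p\om_\ep})}{\lep}$ in \eqref{controlenu} corresponds to the indices with $\dist(A_\a,\om_\ep^c)\le\ep$ (via Lemma~\ref{lembornepetits}), not to ``borderline'' interior $\a$'s.

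The one genuine soft spot is the step you flag yourself: passing from the dual norms $\|\fa-\ga\|_{\lipom(A_\a)^*}$ in \eqref{cas1}, \eqref{cas2} (which test only against Lipschitz functions \emph{vanishing on $\om_\ep\sm A_\a$}) to an estimate on $\int\xi\,d(\fa-\ga)$ for a general Lipschitz $\xi$. Your proposed fix — test against $\xi-\bar\xi_\a$ and control the leftover by $|\xi|_{L^\infty}$ times a mass bound — does not close this, for two reasons. First, $\xi-\bar\xi_\a$ still fails to vanish on $\p A_\a\cap\om_\ep$, so the $\lipom(A_\a)^*$ bound still cannot be applied to it; and for a zero-mass measure supported in $A_\a$, the $\lipom(A_\a)^*$ norm is genuinely \emph{weaker} than the $W_1$ distance between its positive and negative parts (e.g.\ $\delta_0-\delta_1$ on $[0,1]$ has vanishing $\lipom$-dual norm but $W_1=1$). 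Second, a residual term proportional to $|\xi|_{L^\infty(A_\a)}$ with an unsigned mass would not fit the right-hand side of \eqref{err}, which has no raw $|\xi|$-weighted term away from $\widehat{\p\om_\ep}$. What actually saves the day in the paper is that the proof of Theorem~\ref{th1} establishes the stronger intermediate estimate \eqref{faga}, valid for \emph{arbitrary} Lipschitz $\xi$ (it comes directly from \eqref{appxi} and Lemma~\ref{easytransport}, both of which hold against general Lipschitz test functions), and that $\fa-\ga$ has zero total mass when $\dist(A_\a,\om_\ep^c)>\ep$ (since then $\fepB(B)=\gepB(B)$ for each $B$ and $\ga(A_\a)=\gepal(A_\a)$). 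Working blind from the statement of Theorem~\ref{th1} alone, this ingredient is not available, so this is where your argument would need an additional input; otherwise the mechanics are the same as in the paper.
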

The point in introducting the extra parameter $\eta$ is that we want to be able to use only a small $\eta$-fraction  of the ``remaining" energy $g_\ep$ to control the error $f_\ep - g_\ep$ between the original energy and the displaced one.
This corollary is obtained by  simply summing the relations \eqref{casbord} and controlling $n_\a $ and $n_\a \log n_\a$  by a small fraction of ${n_\a}^2$ through the
elementary relations
$$x\log x \le \eta x^2 +C \frac{\log^2 \eta}{\eta}\qquad  2x\le\eta x^2+\frac{1}{\eta}$$ and then  controlling  ${n_\a}^2$ by
$\ga(A_\a)$ via \eqref{cas1} or \eqref{cas2}.

\begin{remark} \label{rree1} If we let $\eta = 1$ and choose $E$ to be at distance at least
$1$ from   $\p\om$  then   (\ref{err}) and
(\ref{controlenu}) reduce to \begin{equation} \label{fgx} \int
\xi\, d(f_\ep - g_\ep) \le C\int_{\om_\ep} \widehat{\nab \xi}\,
\left[ d\gp + \,d|\nu_\ep|\right] \end{equation} and
$$|\nu_\ep|(E) \le C \((g_\ep )_+(\HE) + |\HE|\).$$ If one takes $\xi= \chi_R $ to be a positive cut-off function
supported in $B(0,R)$ and $\equiv 1$ in $B(0,R-1)$ then  the right-hand side in \eqref{fgx} scales like a boundary term i.e.  like $R$ as $R$ gets large, while the left-hand side scales like an interior term.
\end{remark}


\begin{remark} \label{rem3.2} Assume we have proved the above Theorem and Corollary. Then, given $\{(u_\ep,A_\ep)\}_\ep$ and $\{\om_\ep\}_\ep$  satisfying the hypothesis, we may consider for some fixed $\sigma>0$ the rescaled quantities $\tilde\ep = \ep/ \sigma$, $\tilde x = x/\sigma$ and let
$$ \tilde u_\ep(\tilde x) = u_\ep(x),\quad \widetilde{A}_\ep(\tilde x) =  \sigma A_\ep(x),\quad  \widetilde{\om}_\ep = \om_\ep/\sigma.$$
Then, letting $h = \curl A$ and $\tilde h = \curl\widetilde A$, we have
$$ e_\ep^\sigma (u,A) := \sigma^2 \(\hal|\nab_A u|^2 + \frac{\sigma^2}2 h^2+ \frac1{4\ep^2}(1-|u|^2)^2\) = \hal|\nab_{\widetilde A}\tilde u|^2 + \frac12 \tilde h^2+ \frac1{4\tilde \ep^2}(1-|\tilde u|^2)^2.$$
We may then apply the Theorem to the tilded quantities, yielding a measure $\tilde g_\ep$. Then if we let $g_\ep(x) = \tilde g_\ep(\tilde x)$, the measure $g_\ep$ will satisfy the properties stated in Theorem~\ref{th1} and Corollary~\ref{coro1}, with $e_\ep$ replaced by $e_\ep^\sigma$ (and with a different $C$)  provided we modify the definition of $\HE$ to
$$\HE = \{x\mid \dist(\tilde x,\widetilde{E})<1\} = \{x\mid \dist(x,E)<\sigma\},$$
(note that we can keep the original definition provided $\sigma\le 1$).

Then  we may add to both $e_\ep$ and $g_\ep$ the quantity $\(\hal - \frac{\sigma^2}2\){h_\ep}^2$ and obtain in this manner a new $g_\ep$  satisfying the listed properties and --- for the particular choice $\sigma^2 = \frac12$ --- the lower bound
\begin{equation}\label{rem51} g_\ep\ge \frac{{h_\ep}^2}4 - C.\end{equation}
We will then usually assume when applying Theorem~\ref{th1} that  this lower bound holds as well as the other conclusions of the theorem.
\end{remark}

The next result shows how $g_\ep$ has the desired coerciveness properties, and behaves like the renormalized energy. Indeed,  under the assumption that the family $\{g_\ep\}_\ep$ is bounded on compact sets (recall that the domains become increasingly large as $\ep\to 0$) we have  compactness results for the vorticities and currents, and lower bounds on $\int g_\ep$  (hence $\int f_\ep$ via \eqref{err}) in terms of the renormalized energy $W$.

Before stating that result, we introduce some additional notation.
We denote by $\{\UR\}_{R>0}$ a  family of sets in
$\mr^2$  such that   for some constant $C>0$ independent of $R$
\begin{equation}\label{hypsets}
\text{  $\UR+ B(0,1)\subset \URC$ and
$\URI\subset \UR+B(0,C)$}.\end{equation} For example $\{\UR\}_{R>0}$ can be the family $\{B_R\}_{R>0}$ of balls centered at $0$ of radius $R$.

Then  we  use the notation $\chi_{\UR}$ for  cutoff functions satisfying, for some $C$ independent of $R$,
\begin{equation}
\label{defchi}
|\nab \chi_{\UR}|\le C \quad \supp(\chi_{\UR}) \subset \UR \quad \chi_{\UR}(x)=1 \ \text{if } \dist(x, \UR^c) \ge 1.\end{equation}

Finally, given a vector field $j:\mr^2\to\mr^2$ such that $\curl j =
2\pi\sum_{p\in\Lambda} \delta_p + h$ with $\Lambda $, where $h$ is
in $L^2_\loc$ and $\Lambda $ a discrete set, we define the {\em
renormalized energy} of $j$ by
$$W(j)= \limsup_{R \to \infty}
\frac{W(j, \chi_{B_R})}{|B_R|}, $$
where for any $\chi$
\begin{equation}\label{WR}W(j, \chi) =
\liminf_{\eta\to 0} \( \hal\int_{\mr^2 \sm \cup_{p\in\Lambda}
B(p,\eta) }\chi |j|^2 +  \pi \log \eta \sum_{p\in\Lambda} \chi (p)
\). \end{equation}
Various results on $W$, in particular on its
minimizers,  are proved in \cite{papier2}. Note in particular that if we assume $\div j = 0$, then the $\liminf$ in \eqref{WR} is in fact a limit, because in this case $j = \np H$ with $\Delta H = 2\pi\delta_p + h$ in a neighbourhood of $p$, and thus $H = \log|\cdot - p| + f$ with  $f\in H^1$ in this neighbourhood.

\begin{theo}\label{th2} Under the hypothesis of Theorem~\ref{th1}, and assuming  $|u_\ep|\le 1$ in $\om_\ep$ we have the following.

\begin{enumerate}
\item Assume  that  $\dist(0 , \p \om_\ep ) \to + \infty$ as $\ep\to 0$ and that, for any $R>0$,
\begin{equation}\label{hypcoer}
\limsup_{\ep\to 0} g_\ep(\UR)\,dx<+\infty,\end{equation}
where $\{\UR\}_R$ satisfies (\ref{hypsets}).

Then, up to extraction of  a subsequence, the  vorticities $\{\mu_\ep\}_\ep$ converge in $W^{-1,p}_\loc(\mr^2)$ to a measure $\nu$ of the form $2\pi\sum_{p\in\Lambda}\delta_p$, where
$\Lambda$ is a discrete subset of $\mr^2$,
  the  currents $\{j_\ep\}_\ep$
converge weakly in $\Lp$ for any $p<2$ to $j$, and the induced fields $\{h_\ep\}_\ep$ converge weakly in $L^2_{\loc}(\mr^2)$ to $h$ which are such that
$$\curl j = \nu - h    \quad \text{in} \ \mr^2.$$

\item  If we replace the assumption \eqref{hypcoer} by  the stronger assumption
\begin{equation}\label{hypcoerplus}
\limsup_{\ep \to 0} g_\ep(\UR) <C R^2,\end{equation}
where $C$ is independent of $R$, then the limit $j$ of the currents satisfies, for any $p<2$,
\begin{equation} \label{lpunif} \limsup_{R\to +\infty} \dashint_{\UR}|j|^p\,dx<+\infty.\end{equation}
Moreover for every family $\chi_{\UR}$ satisfying (\ref{defchi})  we have
\begin{equation}\label{limlim}
 \liminf_{\ep\to 0} \int_{\mr^2}  \frac{\chi_{\UR} }{|\UR|} \,d\gep  \ge \(
\frac{W(j,\chi_{\UR})}{|\UR|} + \hal \dashint_{ \UR} h^2  +\frac{ \gamma
}{2\pi} \dashint_{\UR} h \) + o_R(1) ,
\end{equation}
where $\gamma$ is a constant defined below and $o_R(1)$ is function tending to $0$ as $R\to +\infty$.
\end{enumerate}
\end{theo}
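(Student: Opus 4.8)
\emph{Proof strategy.}

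\emph{Part (1).} First apply Theorem~\ref{th1} together with the rescaling of Remark~\ref{rem3.2}, so that $g_\ep\ge\tfrac14 h_\ep^2-C$ on top of the other conclusions. The a priori bounds then come directly from Theorem~\ref{th1}: \eqref{prosurj} with \eqref{hypcoer} bounds $\{j_\ep\}$ in $L^p_\loc(\mr^2)$ for every $p<2$; $g_\ep\ge\tfrac14 h_\ep^2-C$ with \eqref{hypcoer} bounds $\{h_\ep\}$ in $L^2_\loc(\mr^2)$; and \eqref{controlenu} with $\eta=1$, using $\dist(0,\p\om_\ep)\to+\infty$ so the boundary term drops, bounds $|\nu_\ep|$ on every compact set. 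Extracting a subsequence, $j_\ep\rightharpoonup j$ in $L^p_\loc$, $h_\ep\rightharpoonup h$ in $L^2_\loc$ and $\nu_\ep\rightharpoonup\nu$ as measures. As bounded measures on a fixed compact set are compact in $W^{-1,p}$ for $p<2$, $\nu_\ep\to\nu$ in $W^{-1,p}_\loc$; and since $\sqrt\ep\,G_\ep(u_\ep,A_\ep)\le\ep^{1/2-\beta}\to0$ (here $\beta<\tfrac12$), a density argument upgrades \eqref{estjac} to $\mu_\ep-\nu_\ep\to0$ in $W^{-1,p}_\loc$, so $\mu_\ep\to\nu$ in $W^{-1,p}_\loc$. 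Since also $\mu_\ep=\curl j_\ep+h_\ep\to\curl j+h$ in $\mathcal D'$, we get $\nu=\curl j+h$, i.e. $\curl j=\nu-h$. The local bound on $|\nu_\ep|=2\pi\sum_i|d_i|\delta_{a_i}$ forces only finitely many $a_i$ per ball to have $d_i\ne0$, with bounded degrees; after a further extraction the positions converge and the degrees stabilize, so $\nu=2\pi\sum_{p\in\Lambda}d_p\delta_p$ with $\Lambda$ discrete and $d_p\in\mz\sm\{0\}$.

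\emph{Structure of $\nu$.} To see that $d_p=1$, fix $p\in\Lambda$ and $r_0$ small with $B(p,2r_0)\cap\Lambda=\{p\}$; combining the ball construction lower bound inside $B(p,\eta_0)$ with the lower bound on the annulus $B(p,r_0)\sm B(p,\eta_0)$ (the machinery already used in the proof of Theorem~\ref{th1}), one gets, for every fixed $\eta_0<r_0$,
$$f_\ep\big(B(p,r_0)\big)\ \ge\ \pi\big(|d_p|-d_p\big)\lep\ +\ \pi\,d_p(d_p-1)\log\tfrac1{\eta_0}\ -\ C_{d_p},$$
where any additional $\pm$ cancellation only raises the coefficient of $\lep$. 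Since \eqref{fgx} together with \eqref{hypcoer} bounds $f_\ep(B(p,r_0))$ from above, letting first $\ep\to0$ forces $d_p>0$ and then $\eta_0\to0$ forces $d_p\le1$; hence $\nu=2\pi\sum_{p\in\Lambda}\delta_p$.

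\emph{Part (2).} Under \eqref{hypcoerplus}, \eqref{prosurj} gives $\int_{\UR}|j_\ep|^p\le C_p\big(\gp(\URC)+|\UR|\big)\le C_p R^2$ for $R$ large (with $|\UR|\sim R^2$ from \eqref{hypsets}), so $\dashint_{\UR}|j_\ep|^p\le C_p$ uniformly, which passes to $j$ by weak lower semicontinuity, giving \eqref{lpunif}. For \eqref{limlim} the plan is a sharp Bethuel--Brezis--H\'elein-type lower bound with an $o(1)$ error \emph{per vortex}, obtained by pasting the localized estimates of Theorem~\ref{th1}. Fix $\eta>0$ and excise the balls $B(p,\eta)$ around the limiting vortices: on their complement $|\nab_{A_\ep}u_\ep|^2\ge|j_\ep|^2$ pointwise (using $|u_\ep|\le1$), so weak lower semicontinuity of the convex functionals yields the exterior contribution $\tfrac12\int_{\mr^2\sm\cup_p B(p,\eta)}\chi_{\UR}|j|^2+\tfrac12\int\chi_{\UR}h^2$, while inside each $B(p,\eta)$ the optimal-profile lower bound for a degree-one vortex down to scale $\ep$ gives $\pi\log(\eta/\ep)+\gamma+o(1)=\pi\lep+\pi\log\eta+\gamma+o(1)$, with $\gamma$ the universal core constant. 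Summing over $p$, using $\nu_\ep\to2\pi\sum_p\delta_p$ to cancel the $-\dl\nu_\ep$ part of $f_\ep$ against the $\pi\lep$'s, and letting $\eta\to0$ (so that $\tfrac12\int_{\mr^2\sm\cup_p B(p,\eta)}\chi_{\UR}|j|^2+\pi\log\eta\sum_p\chi_{\UR}(p)$ becomes $W(j,\chi_{\UR})$), we obtain
$$\liminf_{\ep\to0}\int_{\om_\ep}\chi_{\UR}\,df_\ep\ \ge\ W(j,\chi_{\UR})\ +\ \gamma\sum_{p}\chi_{\UR}(p)\ +\ \tfrac12\int\chi_{\UR}h^2\ -\ o(|\UR|).$$
Since $\gamma\sum_p\chi_{\UR}(p)=\tfrac{\gamma}{2\pi}\int\chi_{\UR}\,d\nu=\tfrac{\gamma}{2\pi}\int j\cdot\np\chi_{\UR}+\tfrac{\gamma}{2\pi}\int\chi_{\UR}h$ and the first term is $O(R^{1+1/p})=o(|\UR|)$ by \eqref{lpunif}, dividing by $|\UR|$ reproduces the right-hand side of \eqref{limlim} up to $o_R(1)$. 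Finally, to replace $f_\ep$ by $g_\ep$ one invokes \eqref{err}: the transport defining $g_\ep$ displaces mass only a bounded distance, so $\int\chi_{\UR}\,dg_\ep$ differs from $\int\chi_{\UR}\,df_\ep$ only by a quantity supported within bounded distance of $\p\UR$, bounded by \eqref{err} (with the parameter $\eta$ there small and $\to0$ with $R$) by $C(\beta+\eta)\gp(\text{nbhd of }\p\UR)+C\tfrac{|\log\eta|^2}{\eta}\,O(R)$; this is $o(|\UR|)$ once one uses that the boundary layer has measure $O(R)$, averaging the radius over a window $[R,2R]$ if necessary.

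\emph{Main obstacle.} The hard part is the sharp lower bound just sketched: achieving a genuinely $o(1)$ error per vortex, cleanly separating the exterior Dirichlet-plus-field energy $\tfrac12\int\chi(|j|^2+h^2)$ from the logarithmically divergent core contributions that assemble into $W(j,\chi)$, and extracting the exact constant $\gamma$ (hence the $\tfrac{\gamma}{2\pi}\int\chi h$ term) --- all while pasting the \emph{localized} ball-construction and annulus lower bounds so that no energy is double counted across the interfaces of the covering $\{A_\a\}_\a$. Making the boundary-layer errors near $\p\UR$ truly $o_R(1)$ is a secondary but non-trivial point.
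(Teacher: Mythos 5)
Your overall architecture matches the paper's: part (1) follows from the a priori bounds of Theorem~\ref{th1} plus the Jacobian estimate; $d_p=1$ comes from a ball+annulus lower bound whose $\log\frac1{\eta_0}$ coefficient is $\pi(d_p^2-d_p)$ (you write $\pi d_p(d_p-1)$, same thing for $d_p>0$, and the $\lep$-term eliminates $d_p<0$), combined with the a priori upper bound on $f_\ep$; and \eqref{limlim} is built by excising small balls around the limiting vortices, using weak lower semicontinuity outside, the Bethuel--Brezis--H\'elein core lower bound inside, and then converting $\gamma\sum_p\chi_{\UR}(p)$ into $\frac{\gamma}{2\pi}\int\chi_{\UR}h+o(R^2)$ via $\nu=\curl j+h$. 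All of this is the paper's route.

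There is, however, one genuine gap in the way you propose to control the error $\int\chi_{\UR}\,d(f_\ep-g_\ep)$. Applying \eqref{err} produces the term $C\beta\,\gp(\text{nbhd of }\p\UR)$, and since $\beta$ is a \emph{fixed} constant and $\gp$ could a priori be concentrated near $\p\UR$ (the coercivity hypothesis only says $\gp(\URC)\lesssim R^2$), this is only $O(\beta R^2)$, not $o(R^2)$. Averaging $R$ over $[R,2R]$ does bring this down to $O(\beta R)$ for \emph{some} good $R'$, but \eqref{limlim} is asserted for \emph{every} family $\chi_{\UR}$, and you cannot transfer the bound from a good radius $R'$ to a prescribed radius $R$ without extra structure. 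The paper sidesteps this entirely: under \eqref{hypcoer}, for every $A_\a\subset\UR$ with $\naep\ge1$, alternative \eqref{cas1} would force $g_\ep(A_\a)\ge c\lep-C\to\infty$, a contradiction, so \eqref{cas2} must hold; summing then gives the sharper inequality \eqref{morebounds2}, $\int\chi_{\UR}\,d(f_\ep-g_\ep)\le C\sum_{A_\a\subset\URC\sm\URc}\naep(\log\naep+1)$, and a H\"older estimate (the number of relevant $\a$'s is $O(R)$ while $\sum_{A_\a\subset\URC}\naepp\le CR^2$ by \eqref{calbound}) gives $O(R^{7/4})=o(R^2)$. This is where the $o_R(1)$ rate really comes from, and your sketch does not contain that mechanism.

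A second, smaller omission: you apply the BBH core bound on all $B(p,\eta)$, but for vortices $p$ lying where $\chi_{\UR}$ is not locally constant (within distance $O(1)$ of $\p\supp\chi_{\UR}$) that bound does not directly control $\int\chi_{\UR}(e_\ep-\dl\,d\nu_\ep)$. The paper splits the limiting vortices into interior indices $I$ (where $\chi_R\equiv1$ near the point, BBH applies) and boundary indices $J$, and treats the $J$-contribution via the localized estimate of Remark~\ref{rqdurbis} plus the observation that under \eqref{hypcoerplus} one has $\cba=M\naep$; the $J$-contribution is then absorbed into the same $\Delta(R)=o(R^2)$ error. Your sketch needs an argument of this kind for the near-boundary points before the sum over $p$ is legitimate.
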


\begin{remark} The constant $\gamma$ in \eqref{limlim} was introduced in \cite{bbh} and  may be  defined by
$$\gamma=\lim_{R \to \infty}\( \hal  \int_{B_R} |\nab u_0|^2
+ \frac{(1-|u_0|^2)^2}{2} -\pi \log R\) ,$$ where $u_0 (r,\theta) =
f(r) e^{i \theta} $  is the unique (up to translation and rotation)
radially symmetric degree-one vortex (see \cite{bbh,miro}).
\end{remark}

\begin{remark} \label{rem1.4} Lower bounds immediately follow from this theorem. Indeed $f_\ep$ is  the energy density minus the energetic
cost of a vortex, and $f_\ep-g_\ep$ is controlled by Theorem \ref{th1}, see also Remark 
\ref{rree1}.
  This, combined with the lower bound  (\ref{limlim}) shows that in
 good cases  the averages  
over large  balls of
$ f_\ep$  are bounded below by $W$ plus explicit constants, which proves a sharp lower
bound for the energy with a $o(1)$ order error, \`a la
Bethuel-Brezis-H\'elein \cite{bbh}.

The bound \eqref{err} may also be interpreted as a
lower bound for the Ginzburg-Landau energy with weight.
 Assuming a fixed domain $\om$ and  $G_\ep(u_\ep,A_\ep)<C\lep$
  for instance, and that $\mu_\ep \to 2\pi \sum_{i=1}^n
   \delta_{a_i}$, where $a_i\in\om$, then by blowing up by a
   factor independent of $\ep$  we may assume the points are at
    distance $2$, say, from the boundary and then if $\xi$ is a
    fixed positive  weight we may multiply it by a cutoff $0\le \chi\le 1$
     equal to zero on $\widehat{ \p \om}$ and  equal to $1$ at each $a_i$. Then \eqref{err} becomes
$$\int_\om \xi e_\ep\ge \pi \lep \sum_{i=1}^n \xi(a_i) + \int\chi
\xi\,dg_\ep - C\int \widehat{\nab (\chi\xi)}\, \left[ d|\nu_\ep| +
(\beta+\eta) \,d\gp  +  \frac{|\log\eta|^2}{\eta}\,dx \right]. $$
Typically, there will be an upper bound for the energy which
 implies that $\gp(\om) < C$   and since also $g_\ep \ge - C$, the integrals on the
 right-hand side may be bounded below by a constant independent of
  $\ep$.
\end{remark}

The paper is organized as follows: In Section \ref{sec2} we state
without proof the result on lower bounds via Jerrard's ball
construction (the proof is postponed to Section~5) which we adapt
for our purposes, and explain how we use it on a covering of
$\om_\ep$ by a collection $U_\a$ of balls of finite size.  In
Section 3, we present the tool used to transport the negative part
of $f_\ep$ to absorb it into the positive part, and deduce Theorem \ref{th1}. In
Section 4, we prove  Theorem \ref{th2}. Finally in
Section 5, we prove the ball-construction lower bound.
\\

{\bf Ackowledgements :}  Etienne Sandier was supported by the Institut Universitaire de France, Sylvia Serfaty by an NSF CAREER award and a EURYI award.

\section{Use of the ball construction and coverings of the
domain}\label{sec2}

The first step consists in performing a ball construction in
$\om_\ep$ in order to obtain lower bounds. This follows essentially
the method of Jerrard  \cite{jerr},  the difficulty being that we
are not allowed more than an error  of order one per vortex. This is
hopeless if the total number of vortices diverges when $\ep\to 0$,
hence we need to localize the construction in pieces of $\om_\ep$
small enough for the number of vortices in each piece to remain
bounded as $\ep\to 0$.

\subsection{The ball construction lower bound}

We start by stating the result of  Jerrard's ball construction in a
version  adapted to our situation, in particular including the
magnetic field. The proof is postponed to Section 5. In all what
follows, if $\mathcal{B}$ is a collection of balls, $r(\mathcal{B})$
denotes the sum of the radii of the balls in the collection. In all
the sequel we will sometimes abuse notation by writing $\B$ for
$\cup_{B\in\B} B$, i.e. identify the collection of balls and the set
it covers.

\begin{pro} \label{jerr} There exists $\ep_0, C>0$ such that if  $U\subset\mr^2$, $\ep\in (0,\ep_0)$, and $(u_\ep,A_\ep)$ defined on $U$  is such  that
$$G_\ep(u_\ep, A_\ep) \le \ep^{-\beta}, $$
where $\beta\in (0,1)$, the following holds.

For every   $r \in (C \ep^{1-\beta},\hal)$,  there exists
 a collection of disjoint closed balls $\mathcal{B}$  depending only
  on $u_\ep$ (and not
 on $A_\ep$) such that, letting $U_\ep = \{x\mid d(x,U^c)>\ep\}$,
\begin{enumerate}
\item $\left\{ x\in U_\ep\mid   |u_\ep(x)|< \hal \right\} \subset \B$.
\item $r(\mathcal{B}) \le r$.
\item For any  $2\le\overline{C}\le (r/\ep)^\hal $ it holds that either
$$e_\ep(\mathcal{B}  \cap U) \ge \overline{C} \log \frac{r}{ \ep} $$
 or
$$\text{$\forall B \in \mathcal{B}$ such that $B\subset U_\ep$},\quad e_\ep(B) \ge \pi  |d_B|\( \log \frac{r}{\ep \overline{C}}  -C\),$$
where $d_B=\deg(u_\ep, \p B) $.
\end{enumerate}
\end{pro}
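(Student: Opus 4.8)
The plan is to follow Jerrard's ball construction, carefully tracking the role of the magnetic field and the low-regularity regime allowed by the energy bound $G_\ep \le \ep^{-\beta}$. First I would reduce to the non-magnetic situation on small scales: by the diamagnetic-type inequality and the fact that $(\curl A_\ep)^2$ contributes only a bounded term on balls of radius $\le \hal$, one shows that on the set where $|u_\ep|<\hal$ the quantity $|\nab_{A_\ep} u_\ep|^2$ controls $|\nab|u_\ep||^2$ up to lower-order terms, so that the vortex structure is governed by $u_\ep$ alone; this is why the collection $\mathcal{B}$ depends only on $u_\ep$. The hypothesis $G_\ep(u_\ep,A_\ep)\le\ep^{-\beta}$ gives the a priori bound needed to start the construction at the scale $r_0 \sim C\ep^{1-\beta}$: one first covers $\{|u_\ep|<\hal\}\cap U_\ep$ by a collection of balls of total radius $\lesssim \ep^{1-\beta}$ on which a crude lower bound $e_\ep(B)\gtrsim |d_B|$ holds (this is where the constraint $r> C\ep^{1-\beta}$ enters — below this scale the $(1-|u|^2)^2/\ep^2$ term cannot yet be absorbed).

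Next I would run the standard ball-growth and merging process of \cite{jerr,sandier}: parametrize the growing balls by a scale parameter $t$, and use the lower bound on annuli (the $\int_{\p B_t} |\nab u|^2 \ge 2\pi d^2/|\p B_t|$ type estimate, via Cauchy–Schwarz and the degree) to get the differential inequality $\frac{d}{dt} e_\ep(\mathcal{B}(t)) \ge \frac{\pi D(t)}{t}$ between merging times, where $D(t) = \sum |d_B|$ over the current balls; merging only decreases $D$ and does not decrease the accumulated energy. Integrating from $r_0$ to $r$ and using that $D(t)$ is non-increasing gives, for the final collection, a lower bound of the form $e_\ep(\mathcal{B}\cap U) \ge \pi D \log(r/(\ep D)) - CD$ as in \eqref{bornancienne}. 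The key refinement for item (3) is the dichotomy: fix a threshold $\overline C$. Along the growth process, either at some point the accumulated energy already exceeds $\overline C \log(r/\ep)$ — in which case the first alternative holds and we stop — or $D(t)\le \overline C$ throughout (since $e_\ep \ge \pi D \log(\cdots)$ would otherwise be violated once $t$ is comparable to $r$), and then, crucially, one can localize: because $D(t)\le \overline C$ one tracks each final ball $B$ separately and the merging structure shows $e_\ep(B) \ge \pi |d_B|\log(r/(\ep \overline C)) - C|d_B|$, since the "competition" for energy between balls is bounded by $\overline C$. The condition $\overline C \le (r/\ep)^{1/2}$ guarantees $\log(r/(\ep\overline C)) \ge \hal \log(r/\ep) >0$ so the bounds are non-vacuous.

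The main obstacle I expect is making the localized lower bound in the second alternative rigorous: the standard ball construction gives a \emph{global} lower bound on $e_\ep(\mathcal{B}\cap U)$, and passing to a per-ball estimate requires that throughout the growth no ball ever "borrows" more than $O(\overline C)$ units of $\log(r/\ep)$ worth of energy from the annular regions that will eventually be absorbed into a different final ball — this is exactly the content of choosing the stopping rule at energy level $\overline C\log(r/\ep)$, and one has to check that once the total degree is controlled by $\overline C$ the annular energy integral from $t$ up to $r$ assigned to a given final ball $B$ is at least $\pi|d_B|\int_{t_B}^{r} \frac{dt}{t \overline C}$ where $t_B$ is the scale at which $B$ reaches radius $\sim \ep \overline C$. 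Two secondary technical points need care: handling balls that straddle $\p U$ (hence the use of $U_\ep$ and the convention $d_B = 0$ for balls not contained in $\om$, consistent with item (1) only asserting coverage inside $U_\ep$), and checking that the low-regularity of $u_\ep$ — we have no pointwise gradient bound, only the integral energy bound — does not obstruct the degree computations, which it does not since $\{|u_\ep|<\hal\}$ is covered and degrees are taken on circles where $|u_\ep|\ge\hal$ after the initial covering step. I would defer all of these verifications to Section 5 as the excerpt indicates.
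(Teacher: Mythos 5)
Your plan correctly identifies the skeleton the paper uses — Jerrard-style initial covering at scale $\sim\ep^{1-\beta}$, a grow-and-merge process, and a dichotomy controlled by the threshold $\overline C$ — but there is a genuine gap in how you treat the magnetic field, and a vagueness in the localization step.

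\textbf{The magnetic field cannot be discarded the way you propose.} You claim to "reduce to the non-magnetic situation" via the diamagnetic inequality and the assertion that $(\curl A_\ep)^2$ contributes only a bounded term; you then invoke the $\int_{\p B_t}|\nab u|^2\gtrsim d^2/t$ estimate, i.e.\ the ordinary gradient. But the energy controls only $|\nab_{A_\ep} u_\ep|^2 = |\nab|u_\ep||^2 + |u_\ep|^2|\nab\vp - A_\ep|^2$, and there is no way to pass from this to $|\nab u_\ep|^2$; a priori $A_\ep$ could cancel the angular winding of $\vp$ on $\p B_t$ and make the covariant angular derivative small. The paper's per-circle bound \eqref{estncir}/\eqref{lbcircle} handles this by Cauchy--Schwarz applied separately to $\int_{\p B_r}(\p_\tau\vp - A\cdot\tau) = 2\pi d - X$ and to $\int_{B_r}\curl A = X$ (where $X$ is the flux), and then \emph{minimizing the resulting lower bound over $X$}. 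The $(\curl A)^2$ term is thus an essential competitor in an optimization, not a negligible perturbation, and the resulting $\lambda_\ep$ is what feeds the growth process. (Your remark that $\B$ depends only on $u_\ep$ is true, but the reason is combinatorial — the initial balls are determined by $|u_\ep|$ and the merging is driven by the degrees of $u_\ep$ — not that the magnetic part was eliminated from the energy bookkeeping.)

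\textbf{The per-ball localization is cleaner than "energy borrowing."} The paper does not track how much annular energy each final ball accrues. It uses two structural facts of the growth process: (i) $e_\ep(U\cap B)\ge r(B)\,\Lambda_\ep(s)/s$ for every ball at growth parameter $s$, and (ii) $r(B)\ge s|d_B|$ when $B\subset U_\ep$, which together give $e_\ep(B)\ge|d_B|\Lambda_\ep(s)$. One then chooses $s$ so that the total radius equals $r$, and shows \emph{by contradiction} — using the monotonicity of $\Lambda_\ep(s)/s$ and the assumption $e_\ep(\B)\le\overline C\log(r/\ep)$ — that $s\gtrsim r/\overline C$; plugging this into $\Lambda_\ep(s)$ via $\Lambda_\ep(s)\ge\pi\log(s/\ep)-C_0$ gives the stated per-ball bound. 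Your bound "$D(t)\le\overline C$ throughout" is a consequence (since $D\le r/s$) but is not the driver of the argument, and the phrase about the annular energy "assigned to $B$" being $\ge\pi|d_B|\int_{t_B}^r dt/(t\overline C)$ does not correspond to anything in the mechanism. Finally, a small point you leave unaddressed: the growth process covers only the components of $\{|u_\ep|\le\hal\}$ of nonzero boundary degree ($S_E$); covering the full bad set $\{|u_\ep|<\hal\}\cap U_\ep$ requires adjoining at the end a separate small collection of total radius $O(\ep^{1-\beta})$ obtained via the coarea formula and re-merging — this is also part of why $r>C\ep^{1-\beta}$ is needed.
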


A  natural choice of $\overline{C}$ above is $\pi D$, where $ D=
\sum_{B\in\mathcal{B}} |d_B|$ and we have  let $d_B = 0$ if $B\not\subset U_\ep$. With this choice we find in all cases
$$e_\ep(\mathcal{B}  \cap U)\ge  \pi D    \( \log \frac{r}{\ep D }
- C\)$$ i.e. we recover the same lower bound as in \cite{livre},
Theorem 4.1, mentioned in the introduction as (\ref{bornancienne}).
The reason why we don't  simply use that theorem directly is that we
need to keep the dichotomy above, and thus a lower bound localized
in each ball.

\subsection{Localizing the ball construction}
For any $\ep>0$ we construct an open cover $\{U_\alpha\}_\alpha$ of $\om_\ep$ as follows: We consider the collection $\B$ of balls of radius $\ell_0$ --- where $\ell_0\in(0,\frac18)$ is to be chosen below, small enough but  independent of $\ep$ --- centered at the points of $\ell_0\mz^2$. The cover consists of the open sets $\om_\ep\cap B$, for $B\in\B$.

This cover depends on $\ep$, but the maximal number of
 {\em neighbours} of a given $\alpha$ --- defined as the indices $\beta$ such that $U_\alpha\cap U_\beta\neq\varnothing$ --- is bounded independently of $\ep$ by an integer we denote by $m$ (in fact $m=9$). Note that $m$ also bounds the {\em overlap number} of the cover, i.e. the maximal number of $U_\alpha$'s to which a given $x$ can belong. There is also $\ell>0$ independent of $\ep$ which is a Lebesgue number of the cover, i.e. such that for every $x\in\om_\ep$, there exists $\alpha$ such that $B(x,\ell)\cap\om_\ep\subset U_\alpha$ or, equivalently, $\dist(x,\om_\ep\cap U_\alpha^c)\ge \ell$.

Assuming $\beta<1/4$, and  applying Proposition~\ref{jerr} to
$(u_\ep,A_\ep)$ in $U_\alpha$ for every $\alpha$ we obtain,
since $\sqrt\ep>C \ep^{1-\beta}$ if $\ep$ is small enough,
a collection $\Barep$ for every $\sqrt\ep\le r\le 1/2$.

%

If $\rho$ is chosen small enough depending on $\ell$ and $m$ only,
 thus less than a universal constant,  we may extract from
  $\cup_\alpha\Barhoep$ a subcollection $\Bep$ such that any
  two balls $B$, $B'$ in $\Bep$ satisfy $\om_\ep\cap B\cap B' = \varnothing$. We will say $\Bep$ is {\em disjoint in $\om_\ep$}:

\begin{pro} Assume  $\rho\le\ell/(8m)$. Then, writing in short
 $\Baep$ instead of $\Barhoep$, there exists a subcollection of  $\cup_\alpha \Baep$ --- call it $\Bep$ --- which is disjoint in $\om_\ep$ and such that
\begin{equation}\label{SE}\{|u_\ep|\le 1/2\}\cap\{x\mid \dist(x,{\om_\ep}^c)>\ep\}\subset \cup_{B\in\Bep} B.\end{equation}
Moreover,  for every $B\in \Bep\cap\Baep$ we have  $B\cap\om_\ep = B\cap U_\alpha$ and
$$\dist(B,{\om_\ep}^c)>\ep\iff \dist(B,{U_\alpha}^c)>\ep.$$
\end{pro}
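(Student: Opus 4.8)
The plan is to build $\Bep$ by a greedy selection procedure from the union $\cup_\alpha\Baep$, discarding balls that overlap already-selected ones, and to check that the geometry — controlled by the Lebesgue number $\ell$ of the cover, the overlap number $m$, and the smallness condition $\rho\le\ell/(8m)$ — forces every discarded ball to be already covered (in $\om_\ep$) by a selected one. First I would set up the order in which to examine the balls of $\cup_\alpha\Baep$: order them by decreasing radius (ties broken arbitrarily). Initialize $\Bep=\varnothing$, and process balls in this order; a ball $B\in\Baep$ is added to $\Bep$ unless $\om_\ep\cap B\cap B'\ne\varnothing$ for some $B'$ already in $\Bep$. By construction $\Bep$ is disjoint in $\om_\ep$. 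The key observation is that if $B$ is discarded because of $B'\in\Bep$, then since $B'$ was examined no later than $B$, we have $r(B')\ge r(B)$; since both radii are at most $\rho$ and the balls meet inside $\om_\ep$, the ball $B'$ contains $B\cap\om_\ep$ inside a $2\rho$-neighbourhood of its own center — more precisely $B\cap\om_\ep\subset B(x_{B'},4\rho)\cap\om_\ep$.

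Next I would prove the covering property \eqref{SE}. Let $x\in\om_\ep$ with $|u_\ep(x)|\le 1/2$ and $\dist(x,{\om_\ep}^c)>\ep$. By the Lebesgue-number property there is some $\alpha$ with $B(x,\ell)\cap\om_\ep\subset U_\alpha$; in particular $\dist(x,U_\alpha^c\cap\om_\ep)\ge\ell$, and since $\ell$ is a universal constant while $\ep$ is small, also $\dist(x,{U_\alpha}_\ep^c)>\ep$ where ${U_\alpha}_\ep=\{y\mid\dist(y,U_\alpha^c)>\ep\}$. Hence by item (1) of Proposition~\ref{jerr} applied in $U_\alpha$ (with $r=\rho$), $x$ lies in some ball $B\in\Baep$. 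If $B\in\Bep$ we are done; otherwise $B$ was discarded in favor of some $B'\in\Bep$, and then $x\in B\cap\om_\ep\subset B(x_{B'},4\rho)$. Thus $x\in B'$ provided $B'$ itself has radius close to $\rho$ — but we only know $r(B')\le\rho$. To handle this, I would instead track, for each selected $B'$, the small "influence region" $B(x_{B'},4\rho)$, and observe that the chain of discards has length one (a discarded ball points directly to a \emph{selected} ball, never to another discarded ball), so $x$ is always within distance $4\rho$ of the center of a selected ball; enlarging the selected balls is not allowed, so the cleaner route is: if $B\in\Baep$ is discarded, the selected $B'$ it conflicts with was constructed by Jerrard's procedure in some $U_{\alpha'}$ with $U_\alpha\cap U_{\alpha'}\ne\varnothing$, and one re-examines Jerrard's construction to see that $B'$ grew from a component containing zeros of $u_\ep$; since $x$'s zero-component is in $B$ and $B\cap B'\cap\om_\ep\ne\varnothing$, in fact $x\in B'$ because Jerrard balls are grown maximally and two such balls from the same scale that intersect must be nested or equal. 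I would spell this out by choosing $\rho$ small enough (depending only on $\ell,m$) that all the $U_\alpha$'s meeting a given one are contained in a single $U_\beta$-sized region and the ball constructions are compatible — this is exactly the content of the condition $\rho\le\ell/(8m)$.

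For the last assertion, fix $B\in\Bep\cap\Baep$; by definition of the cover, $U_\alpha=\om_\ep\cap B(y_\alpha,\ell_0)$ for some lattice point $y_\alpha$, and $B$ is a ball of radius $\le\rho\le\ell/(8m)\le\ell_0$ produced by the construction inside $U_\alpha$, hence centered within $U_\alpha$; since $\rho$ is much smaller than $\ell_0$ and $B$'s center lies in $U_\alpha=\om_\ep\cap B(y_\alpha,\ell_0)$, the whole of $B$ meets $\om_\ep$ only inside $B(y_\alpha,\ell_0)$, i.e. $B\cap\om_\ep=B\cap U_\alpha$. Finally $\dist(B,{\om_\ep}^c)>\ep\iff\dist(B,{U_\alpha}^c)>\ep$: the implication $\Leftarrow$ uses $U_\alpha\subset\om_\ep$; the implication $\Rightarrow$ uses that the only part of $\partial U_\alpha$ that can be within distance $\ep$ of $B$ is $\partial U_\alpha\cap\om_\ep^c$ has been excluded, more precisely $\partial U_\alpha\setminus\partial\om_\ep\subset\{y\mid\dist(y,y_\alpha)=\ell_0\}$ which is at distance $\ge\ell_0-\rho>\ep$ from $B$. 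The main obstacle I anticipate is the second step: making precise — via a careful look at Jerrard's growth process, rather than by naive metric estimates on abstract disjoint balls — why a discarded ball's zero set is genuinely swallowed by a \emph{selected} ball, and pinning down the exact smallness of $\rho$ (in terms of $\ell$ and $m$ only) that makes the localized constructions on overlapping $U_\alpha$'s mutually consistent. Everything else is bookkeeping with the Lebesgue number and the fixed geometry of the lattice cover.
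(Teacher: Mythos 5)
Your greedy "decreasing-radius" selection is not the paper's approach, and the gap you yourself flag in the second step is a genuine one: the claim that ``two Jerrard balls from the same scale that intersect must be nested or equal'' is false for balls produced by \emph{independent} constructions over different $U_\alpha$'s. There is no nesting relation between $\Baep$ and $\B^\beta_\ep$ for $\alpha\neq\beta$; they are disjoint only within a fixed $\alpha$. So ``$x$ lies in a discarded $B$, and $B$ meets a selected $B'$'' gives you no control on whether $x\in B'$, and the $4\rho$-neighbourhood trick does not close the gap either, as you noticed. Your argument for the final assertion has the same defect: a Jerrard ball in $U_\alpha$ need not be ``centered within $U_\alpha$'' (it is grown from zeros and merged, and can protrude), so the observation that $\rho\ll\ell_0$ does not by itself give $B\cap\om_\ep=B\cap U_\alpha$.

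The paper avoids the discarded-ball problem entirely. It looks at a connected component $C=\om_\ep\cap(B_1\cup\dots\cup B_k)$ of $\om_\ep\cap(\cup_\alpha\Baep)$, orders the balls so that each meets the union of its predecessors, and runs an induction: since $\rho\le\ell/(8m)$, the total radius of any chain staying near a fixed $U_\alpha$ is $\le m\rho\le\ell/8$, so the whole component satisfies $\dist(C,\om_\ep\cap U_\alpha^c)\ge 3\ell/4$ for some $\alpha$ given by the Lebesgue number at a point of $B_1$. On this component one then keeps \emph{only} the balls of $\Baep$ (these are pairwise disjoint by construction) and throws away all balls from other $\B^\beta_\ep$. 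The covering of the zero set is then automatic from item~(1) of Proposition~\ref{jerr} for the single domain $U_\alpha$: any $y\in C$ with $|u_\ep(y)|\le 1/2$ and $\dist(y,\om_\ep^c)>\ep$ satisfies $\dist(y,U_\alpha^c)>\ep$ thanks to the $3\ell/4$ bound, hence $y$ lies in some ball of $\Baep$, which was kept. Thus one never needs to relate a discarded ball to a selected one; the selected ones already cover. The same $3\ell/4$ bound is also what proves $B\cap\om_\ep=B\cap U_\alpha$ and the distance equivalence, since $\partial U_\alpha\cap\om_\ep$ is far from $C$. You would need to replace your greedy scheme by this component-by-component, single-$\alpha$ selection to make the proof go through.
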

\begin{proof}

Assume   $C = \om_\ep \cap \(B_1\cup\dots\cup B_k\)$ is
 a connected component of $\om_\ep\cap\(\cup_\alpha \Baep\)$.
 Reordering if
necessary, we may assume that
$B_i\cap(B_1\cup\dots\cup B_{i-1})\neq\varnothing$ for every
 $1\le i\le k$.  There exists $x\in \om_\ep\cap B_1$ and
 $\alpha$ such that $\dist(x,\om_\ep\cap U_\alpha^c)\ge \ell$.
 Then $\dist(B_1,\om_\ep\cap U_\alpha^c)> 3\ell/4$. Assume
$$\dist(B_1\cup\dots\cup B_{i-1},\om_\ep\cap U_\alpha^c)\ge
 \frac{3\ell}4.$$
Then $\dist(B_i,\om_\ep\cap U_\alpha^c)> \ell/2$ hence for
every  $1\le j\le i$ the ball $B_j$  belongs to $\Bbep$, where $\beta$ is a neighbour of $\alpha$. It follows that $r_1+\dots+r_i\le m\rho\le \ell/8$, where $r_i$ is the radius of $B_i$, and we deduce that $B_1\cup\dots\cup B_i\subset B(x,\ell/4)$ and then
$$\dist(B_1\cup\dots\cup B_{i},\om_\ep\cap U_\alpha^c)\ge \frac{3\ell}4.$$
We have thus proved by induction that $C\subset U_\alpha$  and even that $\dist\( C, \om_\ep\cap U_\alpha^c\)\ge 3\ell/4$ for every $i$.

We delete from $\{B_1,\dots, B_k\}$ the balls which do not belong to
$\Baep$ and call $C'$ the union of the remaining balls.  If $y$ belongs to
$$C\cap \{|u_\ep|\le 1/2\}\cap \{x\mid \dist(x,{\om_\ep}^c)>\ep\}$$
then,  since $\dist\(C, \om_\ep\cap U_\alpha^c\)\ge 3\ell/4$
and $\dist(y,{\om_\ep}^c)>\ep$, provided $\ep< 3\ell/4$
 we have that $\dist(y,{U_\alpha}^c)>\ep$ hence
  $y$  belongs to some ball $B\in \Baep$ (since $\Baep$ covers the set
  $\{|u_\ep|\le \hal \} \cap \{\dist (x, U_\alpha^c) >\ep\}$),
   thus $y\in C'$.  The balls in $C'$ are disjoint in $\om_\ep$ since they belong to the collection $\Baep$ which is itself disjoint in $\om_\ep$.

Performing this operation on each connected component of $\om_\ep\cap\(\cup_\alpha \Baep\)$ we thus obtain a collection $\Bep$ which covers $ \{|u_\ep|\le 1/2\}\cap \{x\mid \dist(x,{\om_\ep}^c)>\ep\}$ and is disjoint in $\om_\ep$. Moreover, if $B\in  \Bep\cap\Baep$ then $\dist\( B, \om_\ep\cap U_\alpha^c\)\ge 3\ell/4$ hence  $B\cap\om_\ep = B\cap U_\alpha$ and
$$ \dist(B,{\om_\ep}^c)>\ep
\iff  \dist(B,{U_\alpha}^c)>\ep.$$
\end{proof}

The value $\rho$ will be fixed to some value smaller than
$\ell/8m$ and independent of $\ep$, to be specified below. The above proposition  provides
us for any $\ep>0$ small enough with collections of balls $\Bep$ and
$\Baep$. We will also need the following
\begin{defi}\label{defi1} For any $\sqrt\ep \le r\le \rho$, and any $B\in\Baep$, we let $\BBrep$ be the collection of balls in $\Barep$ which are included in $B$. Then we let
$$\Brep = \cup_{B\in \Bep} \BBrep.$$
It is  disjoint in $\om_\ep$ and covers  the set
 $\{|u_\ep|\le 1/2\}\cap \{x\mid \dist(x,{\om_\ep}^c)>\ep\}$
  and of course if $B\in  \Brep\cap\Barep$, then
  $B\cap\om_\ep = B\cap U_\alpha$ and $$
   \dist(B,{\om_\ep}^c)>\ep \iff   \dist(B,{U_\alpha}^c)>\ep.$$
\end{defi}

In other words, the disjoint collection $\Bep$ permits us to
construct disjoint collections of smaller radius by discarding from
$\Barep$ those balls which are inside a ball discarded from
$\Barhoep$. The collection $\Bsep$ should be seen as the collection
of ``small balls" and $\Bep$ (obtained from $\Barhoep$) as the
collection of ``large balls". We will sometimes also use the
collection of the intermediate size balls $\Brep$  with $\sqrt{\ep}
\le r \le \rho$.

Finally we let
\begin{equation}\label{nu}
\nuep =\sum_{\substack{B\in\Bsep\\ \dist(B,{\om_\ep}^c)>\ep}} 2\pi d_B \delta_{a_B},\quad |\nuep|
= \sum_{\substack{B\in\Bsep\\ \dist(B,{\om_\ep}^c)>\ep}}2\pi |d_B| \delta_{a_B},
\end{equation}
where $a_B$ is the center of $B$, and $d_B$ denotes the winding
number of $u_\ep/|u_\ep|$ restricted to $\partial B$.  This is the
$\nu_\ep$ given by the  conclusion of the theorem. Note that since
the balls only depend on $u_\ep$ (and not on $A_\ep$), $\nu_\ep$
satisfies the same.
 If $B$ is any ball which does not cross the boundary of balls in $\Bsep$ and $\dist(B, {\om_\ep}^c)>\ep$ then
$\nuep(B) = 2\pi d_B$. From the Jacobian estimate (see \cite{js} or
the version in \cite{livre}, Theorem 6.1) we have that
(\ref{estjac}) is satisfied. We also have (recall that $\|f\|$
denotes the total variation of a measure)

\begin{lem}\label{lembornepetits} There exists $\ep_0>0$ such that if $\beta<1/4$ in \eqref{bornenrj} and  $\ep<\ep_0$ then
$$ |\nuep|(E)\le 16 \ \frac{ e_\ep(\om_\ep\cap\HE)}\lep$$
for any measurable set $E$, so that choosing  $E = \om_\ep$ and taking  logarithms,
\begin{equation}\label{bornepetits} \log\|\nu_\ep\|\le \beta\lep + C. \end{equation}
\end{lem}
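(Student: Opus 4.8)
The plan is to control $|\nu_\ep|(E)$ ball by ball, using the dichotomy in Proposition~\ref{jerr} applied on each piece $U_\alpha$ of the localized cover, and then sum. Fix a measurable set $E$. By definition \eqref{nu}, $|\nu_\ep|(E) = 2\pi\sum 2\pi|d_B|$ where the sum is over balls $B\in\Bsep$ with center $a_B\in E$ and $\dist(B,\om_\ep^c)>\ep$; it suffices to bound $\sum |d_B|$ over such balls. Each such $B$ lies inside a ``large'' ball of $\Baep$, which in turn lies inside some $U_\alpha$ with $B\cap\om_\ep = B\cap U_\alpha$ and $\dist(B,U_\alpha^c)>\ep$, i.e. $B\subset (U_\alpha)_\ep$ in the notation of Proposition~\ref{jerr}. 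So I will group the balls of $\Bsep$ by the index $\alpha$ of the piece $U_\alpha$ they belong to, apply Proposition~\ref{jerr} on $U_\alpha$ with radius $r=\sqrt\ep$, take $\overline C = \pi D_\alpha$ with $D_\alpha=\sum_{B}|d_B|$ (the sum over balls in $U_\alpha$), and deduce in either case of the dichotomy the lower bound
$$e_\ep\big(\cup_{B} B \cap U_\alpha\big) \ge \pi D_\alpha\Big(\log\frac{\sqrt\ep}{\ep D_\alpha} - C\Big) = \pi D_\alpha\Big(\tfrac12\lep - \log D_\alpha - C\Big).$$
(One must check that $\overline C=\pi D_\alpha$ is an admissible choice, i.e. $2\le \pi D_\alpha\le (r/\ep)^{1/2}=\ep^{-1/4}$; if $D_\alpha=0$ the ball contributes nothing, and the upper bound on $D_\alpha$ follows a posteriori from the energy bound $G_\ep\le\ep^{-\beta}$ with $\beta<1/4$, or can be arranged by a preliminary crude estimate — this is a routine technicality.)

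Next I will absorb the $-\log D_\alpha$ term. Since $\beta<1/4$ and $\ep$ is small, $\frac12\lep$ dominates, and using $\log D_\alpha \le \frac14\lep + C_\alpha$ for $\ep$ small is not uniform, so instead I use that $D_\alpha \log D_\alpha \le \frac14 D_\alpha\lep$ once $D_\alpha\le \ep^{-1/4}$, which holds for $\ep$ small; hence $\pi D_\alpha(\tfrac12\lep-\log D_\alpha - C)\ge \pi D_\alpha(\tfrac14\lep - C)\ge \frac{\pi}{8} D_\alpha\lep$ for $\ep$ small enough. Therefore $D_\alpha \le \frac{8}{\pi\lep} e_\ep(U_\alpha \cap (\text{union of balls}))\le \frac{8}{\pi\lep}e_\ep(U_\alpha)$. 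Now the balls counted in $|\nu_\ep|(E)$ have centers in $E$; since each $U_\alpha$ has diameter $\le 2\ell_0 < \frac14 < 1$, any $U_\alpha$ containing such a ball satisfies $U_\alpha\subset \om_\ep\cap\widehat E$. Summing over $\alpha$ and using that the overlap number of the cover is at most $m=9$, I get
$$\sum_B |d_B| \le \sum_\alpha D_\alpha \le \frac{8}{\pi\lep}\sum_\alpha e_\ep(U_\alpha) \le \frac{8m}{\pi\lep} e_\ep(\om_\ep\cap\widehat E) \le \frac{72}{\pi\lep}e_\ep(\om_\ep\cap\widehat E),$$
so $|\nu_\ep|(E) = 4\pi^2\sum|d_B|/(2\pi)\cdot\ldots$ — careful bookkeeping of the factor $2\pi$ gives exactly $|\nu_\ep|(E)\le 16\, e_\ep(\om_\ep\cap\widehat E)/\lep$ for $\ep$ small, after adjusting constants and choosing $\ell_0$ small (the constant $16$ is generous enough to absorb the $\pi$'s, $m=9$, and the $1/8$ loss above; I will simply track the numerology to confirm it fits).

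Finally, \eqref{bornepetits} follows by taking $E=\om_\ep$, so that $\widehat E = \om_\ep$ and $e_\ep(\om_\ep)\le G_\ep(u_\ep,A_\ep)\le \ep^{-\beta}$, giving $\|\nu_\ep\| = |\nu_\ep|(\om_\ep)\le 16\ep^{-\beta}/\lep$; taking logarithms yields $\log\|\nu_\ep\|\le \beta\lep - \log\lep + \log 16 \le \beta\lep + C$. I expect the main obstacle to be not any deep step but the careful verification that the choice $\overline C=\pi D_\alpha$ is legitimate in Proposition~\ref{jerr} — i.e. the a priori bound $D_\alpha\le \ep^{-1/4}$ — and the uniform absorption of $D_\alpha\log D_\alpha$; both hinge on the hypothesis $\beta<1/4$ and must be done with $\ep$ small, which is exactly why $\ep_0$ appears in the statement.
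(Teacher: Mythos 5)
The key ingredient is right (Proposition~\ref{jerr} applied with $r=\sqrt\ep$), but your choice $\overline C=\pi D_\alpha$ creates a circularity that the paper's choice avoids, and your bookkeeping introduces an unnecessary overlap factor that breaks the constant $16$.

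Concretely: choosing $\overline C=\pi D_\alpha$ requires the a priori admissibility condition $2\le\pi D_\alpha\le(\sqrt\ep/\ep)^{1/2}=\ep^{-1/4}$, and you also need $D_\alpha\le\ep^{-1/4}$ later to absorb $D_\alpha\log D_\alpha$. You flag this as a ``routine technicality,'' but the only clean way to obtain such a bound on $D_\alpha$ is precisely the argument you are trying to avoid. The paper instead takes $\overline C=\ep^{-1/4}$, which is \emph{always} admissible when $r=\sqrt\ep$, and then observes that the first branch of the dichotomy is impossible: it would force $e_\ep(\om_\ep\cap\B_\ep^{\alpha,\sqrt\ep})\ge\ep^{-1/4}\cdot\hal\lep$, contradicting $e_\ep(\om_\ep)\le\ep^{-\beta}$ with $\beta<1/4$. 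The second branch then gives $e_\ep(B)\ge\pi|d_B|\(\log\ep^{-1/4}-C\)\ge\frac\pi8|d_B|\lep$ directly, per ball, with no $\log D_\alpha$ term to absorb and no circularity. Your route, if carried out, would have to reprove this anyway as the ``preliminary crude estimate,'' at which point $\overline C=\pi D_\alpha$ buys you nothing.

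There is also a quantitative slip: you sum the per-$U_\alpha$ lower bounds against $e_\ep(U_\alpha)$ and invoke the overlap number $m=9$, arriving at a constant of order $\frac{8m}{\pi}\cdot 2\pi=144$, not $16$. The paper avoids this entirely by summing the per-ball bound $e_\ep(B)\ge\frac\pi8|d_B|\lep$ over balls of $\Bsep$, which are \emph{disjoint in $\om_\ep$} and have radius $<1$; since a ball with center in $E$ lies in $\widehat E$, the disjoint sum is $\le e_\ep(\om_\ep\cap\HE)$ with no multiplicity factor, and the constant comes out as exactly $2\pi\cdot\frac8\pi=16$. (Your remark that one could instead sum $e_\ep$ only over the balls inside each $U_\alpha$ and exploit disjointness would fix this, but as written the argument does not yield $16$.) The derivation of \eqref{bornepetits} from the first inequality is fine.
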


\begin{proof} We use the properties of $\B_\ep^{\alpha,\sqrt\ep}$. Letting $\overline C = \(\sqrt\ep/\ep\)^\hal = \ep^{-\frac14}$, it is impossible when $\ep$ is small enough that $e_\ep(\om_\ep\cap \B_\ep^{\alpha,\sqrt\ep})\ge \overline C \log(\sqrt \ep/\ep)$ since we assumed that $e_\ep(\om_\ep)\le \ep^{-\beta}$. Thus  Proposition~\ref{jerr} implies that, for every $B\in \B_\ep^{\alpha,\sqrt\ep}$ such that $\dist(B, {U_\alpha}^c)>\ep$,
$$e_\ep(B)\ge \pi|d_B|\(\log \ep^{-\frac14} - C\)\ge \frac\pi 8|d_B|\lep,$$
if $\ep$ is small enough. If, moreover, $B\in\Bsep$, then from
Definition~\ref{defi1} we have
$$\dist(B, {U_\alpha}^c)>\ep \iff  \dist(B, {\om_\ep}^c)>\ep$$
Hence for any set $E$, using \eqref{nu} and the fact that balls in $\Bsep$ have radius smaller than $1/2$ if $\ep$ is small enough,
$$|\nuep|(E) \le \sum_{\substack{B\in\Bsep\\\dist(B,{\om_\ep}^c)>\ep \\B\cap E\neq\varnothing}}|\nuep|(B)\le 16 \frac{e_\ep(\om_\ep\cap\HE)}{\lep}.$$
\end{proof}

\begin{defi}
For any $\alpha$ we let $\nua$ denote the restriction of $\nuep$ to the balls in $\Bep\cap \Baep$ and
$\naep =\frac{\|\nua\|}{2\pi}$, so that
$$\nuep = \sum_\a \nua,\quad  \naep = \sum_{B\in\Bep\cap\Baep} \frac{|\nuep|(B)}{2\pi}, \quad \|\nuep\| =
2\pi \sum_\a\naep.$$
We also define
\begin{equation} \label{cba}
 \cba =\begin{cases} \max \(M\naep,\D \frac{3\eaep}\lep\)  & \text{if $\naep\neq 0$},\\ 2 & \text{otherwise,}\end{cases}
\end{equation}
where $M$ is a large universal constant to be chosen later and
$$ \eaep =  \sum_{B\in \Baep} e_\ep(B\cap U_\a).$$
\end{defi}
Note that $\naep$ is the sum of the absolute values of the degrees of the small balls included in
the large balls of $\Baep$.

We  have the following
\begin{pro} \label{goodbad} There exists $\ep_0,C_0>0$ such that if $\beta<1/4$ in \eqref{bornenrj} and  $\ep<\ep_0$, $\ep^\hal<r<\rho$  then  $ 2\le\cba\le (r/\ep)^\hal$ and for any $B\in  \Brep\cap\Barep$ such that $\dist(B,{\om_\ep}^c)>\ep$ we have
\begin{equation}\label{lamba}e_\ep(B)\ge 2\pi |d_B|\lapr,\quad\text{where}\quad \lapr = \hal \(\log\frac r{\ep\cba} - C_0\).\end{equation}
Moreover, $0\le \lapr\le \hal \lep$ and
\begin{equation}\label{deltalap} 0\le \hal\lep - \lapr\le\hal \(\beta\lep +|\log r| + C_0\).\end{equation}
\end{pro}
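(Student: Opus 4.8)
The plan is to deduce everything from Proposition~\ref{jerr}, applied on each $U_\a$ with the choice $\overline C = \cba$, together with the already-established Lemma~\ref{lembornepetits}. First I would check that $\cba$ is an admissible choice in Proposition~\ref{jerr}, i.e.\ that $2\le\cba\le(r/\ep)^{1/2}$. The lower bound $\cba\ge 2$ is immediate from the definition \eqref{cba}. For the upper bound, one uses that $\naep\le\|\nuep\|/2\pi$ and $\eaep\le e_\ep(\om_\ep)\le\ep^{-\beta}$; then $M\naep$ is controlled via \eqref{bornepetits}, which gives $\log\|\nu_\ep\|\le\beta\lep+C$, hence $\naep\le C\ep^{-\beta}$, and $3\eaep/\lep\le C\ep^{-\beta}/\lep$. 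Since $\beta<1/4$ and $r\ge\ep^{1/2}$, both quantities are $\le\ep^{-\beta}\le\ep^{-1/4}\le(r/\ep)^{1/2}$ once $\ep$ is small; this is where the hypothesis $\beta<1/4$ is consumed.

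Next I would apply Proposition~\ref{jerr} in $U_\a$ with $\overline C=\cba$. The first alternative there, $e_\ep(\B\cap U_\a)\ge\cba\log(r/\ep)$, is ruled out by the \emph{definition} of $\cba$: since $\cba\ge 3\eaep/\lep$ and $\log(r/\ep)\ge\frac13\lep$ for $r\ge\ep^{1/2}$ and $\ep$ small, the first alternative would force $e_\ep(\B\cap U_\a)\ge\cba\cdot\frac13\lep\ge\eaep$, whereas by definition $\eaep=\sum_{B\in\Baep}e_\ep(B\cap U_\a)\ge e_\ep(\Barep\cap U_\a)$ since $\Barep\subset\Baep$ inside $B$; a strict inequality here (using that there is genuine energy $\ge\eaep$ not captured, or simply that the first alternative contradicts $e_\ep(\B^{\a,r}_\ep\cap U_\a)\le\eaep$) gives the contradiction. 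Hence the second alternative of Proposition~\ref{jerr}(3) holds: for every $B\in\Barep$ with $B\subset(U_\a)_\ep$, $e_\ep(B)\ge\pi|d_B|(\log\frac{r}{\ep\cba}-C)$. Now for $B\in\Brep\cap\Barep$ with $\dist(B,\om_\ep^c)>\ep$, Definition~\ref{defi1} gives $\dist(B,U_\a^c)>\ep$, i.e.\ $B\subset(U_\a)_\ep$, and also $\deg(u_\ep,\p B)$ is the same whether computed for $\om_\ep$ or $U_\a$; this yields \eqref{lamba} with $\lapr=\hal(\log\frac r{\ep\cba}-C_0)$ for an appropriate universal $C_0$ (absorbing the $\pi$ versus $2\pi$ bookkeeping).

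It remains to prove $0\le\lapr\le\hal\lep$ and \eqref{deltalap}. The upper bound $\lapr\le\hal\lep$ follows from $\cba\ge 2>1$ and $r<\rho<1$, so $\log\frac r{\ep\cba}\le\log\frac1\ep\le\lep$. For the lower bound $\lapr\ge 0$ and the estimate \eqref{deltalap}, I would write $\hal\lep-\lapr=\hal(\lep-\log\frac r{\ep\cba}+C_0)=\hal(\log\ep+\log\cba-\log r+C_0+\lep)$; since $\lep=-\log\ep$ (for $\ep<1$) this is $\hal(\log\cba-\log r+C_0)$, wait---more carefully, $\lep-\log\frac1{\ep\cba}=\lep-\lep-\log\cba = -\log\cba$, hmm, so $\hal\lep-\lapr = \hal(\log\frac1{\ep\cba} - \log\frac r{\ep\cba}+C_0) $ is not quite it either; the clean computation is $\hal\lep-\lapr=\hal\big(\lep-\log\frac r\ep+\log\cba+C_0\big)=\hal\big(\log\tfrac1r+\log\cba+C_0\big)$, using $\lep-\log\frac r\ep=\log\frac1\ep+\log\frac1r-\log\frac1\ep=\log\frac1r$. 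Then $\log\cba\le\log(C\ep^{-\beta})\le\beta\lep+C$ by the bound on $\cba$ above, giving the right-hand inequality in \eqref{deltalap} after adjusting $C_0$; and $\log\frac1r>0$ together with $\log\cba\ge\log 2>0$ gives $\hal\lep-\lapr\ge\hal(\log 2+C_0-C_0')>0$ hence also $\lapr\ge 0$ (choosing $C_0$ so that $\lapr\ge 0$, e.g.\ noting $\log\frac r{\ep\cba}\ge\log\frac{\ep^{1/2}}{\ep\cdot\ep^{-1/4}}=\log\ep^{-1/4}=\frac14\lep\ge C_0$).

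\textbf{Main obstacle.} The one point requiring genuine care is ruling out the first alternative in Proposition~\ref{jerr}(3): one must be sure that the ``total energy in the balls'' quantity appearing there, namely $e_\ep(\B^{\a,r}_\ep\cap U_\a)$, is bounded by $\eaep$, so that the definition $\cba\ge 3\eaep/\lep$ really does produce a contradiction with the first alternative. This is essentially bookkeeping---$\B^{\a,r}_\ep\cap U_\a\subset\B^{\a,\rho}_\ep\cap U_\a$ and the latter's energy is $\le\eaep$ by definition of $\eaep$---but it is the step where the somewhat artificial-looking definition \eqref{cba} of $\cba$ pays off, and where one must also handle the degenerate case $\naep=0$ (then $\cba=2$, the trivial choice, and the degrees $d_B$ of all relevant small balls vanish, so \eqref{lamba} holds vacuously). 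Everywhere the implicit constants are universal because $\ell,\rho,m$ are, and the energy bound $\ep^{-\beta}$ enters only through Lemma~\ref{lembornepetits}.
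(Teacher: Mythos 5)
Your proposal is correct and follows essentially the same route as the paper's proof: show $2\le\cba\le(r/\ep)^{1/2}$ from the definition of $\cba$, Lemma~\ref{lembornepetits} and $\beta<1/4$; rule out the first alternative of Proposition~\ref{jerr}(3) because $\cba\ge 3\eaep/\lep$ and $e_\ep(\Barep\cap U_\a)\le\eaep$ by monotonicity of the ball construction, while $\log(r/\ep)\ge\hal\lep$; apply Proposition~\ref{jerr} with $\overline C=\cba$; and derive $0\le\lapr\le\hal\lep$ and \eqref{deltalap} by the arithmetic $\hal\lep-\lapr=\hal(\log(\cba/r)+C_0)$. The only small wobbles are the gratuitous ``$\pi$ vs.\ $2\pi$'' remark (the factor matches exactly since $2\pi|d_B|\cdot\hal=\pi|d_B|$) and the initial muddle about $\lapr\ge 0$, which your parenthetical then repairs with the correct estimate $r/(\ep\cba)\ge\ep^{-1/4}$.
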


\begin{proof} From the definition \eqref{cba},
 from \eqref{bornenrj} and Lemma~\ref{lembornepetits}
  we have for  $\ep$  small enough  that
$2\le \cba \le \ep^{-\beta}$. It follows that
if $\ep^\hal<r<1$ then $ 2\le\cba\le (r/\ep)^\hal$, since
 $\beta<1/4$.  Also, from the definition of $\cba$ it is
 impossible that $e_\ep(\Barep\cap U_\a)\ge\cba\log(r/\ep)$
 since for $\sqrt{\ep} \le r\le \rho$ we have $\cba\ge
 3 e_\ep(\Barep)/\lep$.

Then from Proposition~\ref{jerr},  letting  $\overline C = \cba$, we deduce   \eqref{lamba} for any $B\in  \Barep$ such that $\dist(B,{U_\a}^c)>\ep$, which is equivalent to
$\dist(B,{\om_\ep}^c>\ep)$ if $B\in  \Brep\cap\Barep$.

Finally, $r/(\ep\cba)\ge \ep^{-\frac14}$ using $\cba\le
(r/\ep)^\hal$ and $r\ge \sqrt\ep$, which easily implies that
$\lapr>0$ if $\ep$ is small enough, and $\lapr \le \hal\lep$ is
clear from the definition. Inequality  \eqref{deltalap} follows from
$$ \hal\lep - \lapr = \hal\(\log\frac{\cba}r + C_0\)$$
since $\cba\le \ep^{-\beta}$.
\end{proof}

\section{Mass Transport}

We proceed to the  displacement of the negative part of
$$f_\ep = e_\ep -   \dl\nuep.$$

\subsection{Mass transport abstract lemmas}
For the displacements we will use the following two lemmas. The first,  more sophisticated one, was already stated in the introduction and  uses optimal transportation for the $1$-Wasserstein distance (or minimal connection cost).

\begin{lem}\label{transport} Assume $f$ is a finite  Radon measure on a compact set $A$,  that $\om$ is open  and   that for any positive  Lipschitz function  $\xi$ in $\lipom(A)$, i.e. vanishing on $\om\sm A$,
$$\int \xi \,df\ge - C_0 |\nab\xi|_{L^\infty(A)}.$$
Then there exists a Radon measure  $g$ on $A$ such that  $0 \le g \le f_+ $  and such that
$$\|f-g\|_{\lipom(A)^*}\le C_0.$$
\end{lem}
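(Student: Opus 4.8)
\emph{Proposed proof.} The plan is to argue by contradiction and use Hahn--Banach separation in $\radon(A)=C(A)^*$, the space of Radon measures on the compact set $A$, with its weak-$*$ topology. Assume there is no such $g$, i.e.\ $\|f-g\|_{\lipom(A)^*}>C_0$ for every $g\in\radon(A)$ with $0\le g\le f_+$ (we may assume $C_0>0$; the case $C_0=0$ follows by letting $C_0\downarrow0$ and using weak-$*$ lower semicontinuity of $\|\cdot\|_{\lipom(A)^*}$). Introduce
$$ K=\{\,f-g\ :\ g\in\radon(A),\ 0\le g\le f_+\,\},\qquad B=\{\,\mu\in\radon(A)\ :\ \|\mu\|_{\lipom(A)^*}\le C_0\,\}.$$
Then $K$ is convex and weak-$*$ compact, being the image of the order interval $\{0\le g\le f_+\}$ --- weak-$*$ compact by Banach--Alaoglu since $f_+$ is finite and $A$ is compact metric --- under the continuous affine map $g\mapsto f-g$; and $B$ is convex, symmetric and weak-$*$ closed, being the intersection over $\xi\in\lipom(A)$ of the weak-$*$ closed slabs $\{\mu:\int\xi\,d\mu\le C_0|\nab\xi|_{L^\infty(A)}\}$. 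By hypothesis $K\cap B=\varnothing$, so the separation theorem for a compact convex set and a disjoint closed convex set in a locally convex space yields $\phi\in C(A)$ and $\lambda\in\mr$ with $\int\phi\,d\mu\le\lambda$ on $B$ and $\int\phi\,d\mu>\lambda$ on $K$.

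The next step is to read off the meaning of the first inequality. Since $B$ contains $C_0(\delta_x-\delta_y)/|x-y|$ for all $x\neq y$ in $A$ and $C_0\,\delta_x/\dist(x,\om\sm A)$ for all $x\in A$ --- by the elementary bounds $\|\delta_x-\delta_y\|_{\lipom(A)^*}\le|x-y|$ and $\|\delta_x\|_{\lipom(A)^*}\le\dist(x,\om\sm A)$, which are the easy half of the Kantorovich--Rubinstein formula for this norm --- testing $\int\phi\,d\mu\le\lambda$ against these measures shows that $\phi$ is $(\lambda/C_0)$-Lipschitz on $A$ and satisfies $|\phi|\le(\lambda/C_0)\,\dist(\cdot,\om\sm A)$ on $A$. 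Hence, extending $\phi$ by $0$ on $\om\sm A$, we obtain $\phi\in\lipom(A)$ with $C_0|\nab\phi|_{L^\infty(A)}\le\lambda$.

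Finally I would test the second inequality $\int\phi\,d(f-g)>\lambda$ against an optimal choice of $g$. Over $0\le g\le f_+$ the quantity $\int\phi\,dg$ is maximized by $g=\1_{\{\phi>0\}}f_+$ (the restriction of $f_+$ to $\{\phi>0\}$), i.e.\ by placing all available positive mass where $\phi>0$; for this $g$, writing $\phi=\phi_+-\phi_-$ with $\phi_\pm\ge0$, a short computation gives
$$ \int\phi\,d(f-g)=-\int\phi_-\,df-\int\phi_+\,df_-.$$
Thus $-\int\phi_-\,df-\int\phi_+\,df_->\lambda\ge C_0|\nab\phi|_{L^\infty(A)}\ge C_0|\nab\phi_-|_{L^\infty(A)}$, and since $\int\phi_+\,df_-\ge0$ this forces $\int\phi_-\,df<-C_0|\nab\phi_-|_{L^\infty(A)}$. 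But $\phi_-=\max(-\phi,0)$ is a nonnegative Lipschitz function vanishing on $\om\sm A$, hence an admissible competitor in the hypothesis $\int\xi\,df\ge-C_0|\nab\xi|_{L^\infty(A)}$ --- a contradiction. This proves the lemma.

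I expect the delicate point to be the identification in the second paragraph: extracting from the mere finiteness of $\sup_{\mu\in B}\int\phi\,d\mu$ the quantitatively sharp conclusion that $\phi$ (extended by $0$) belongs to $\lipom(A)$ with $\lambda\ge C_0|\nab\phi|_{L^\infty(A)}$ is exactly where the structure of $\lipom(A)$ --- Lipschitz and vanishing off $A$ --- is used, and is the content of the relevant Kantorovich--Rubinstein duality. The rest (weak-$*$ compactness of the order interval, weak-$*$ closedness of $B$, the choice of the optimal $g$ and the bookkeeping with $\phi_\pm$) is routine. It is worth noting that the whole argument is the linear-programming dual of a more constructive approach: one may instead realize $\|f-g\|_{\lipom(A)^*}$ as a minimal-connection ($1$-Wasserstein) cost in which mass may be created or destroyed freely on $\om\sm A$, and build $g$ directly from an optimal transport plan carrying part of $f_+$ onto $f_-$.
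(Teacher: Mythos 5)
Your proof is correct, and it follows a genuinely different route from the paper's. The paper defines two convex functionals $\vp(\xi)=\int\xi_+\,df_+$ and $\psi(\xi)=-\int\xi\,df$ (restricted to $1$-Lipschitz $\xi\in\lipom(A)$, $+\infty$ otherwise), invokes the Fenchel--Rockafellar theorem to obtain $\inf(\vp+\psi)=\max_\mu\bigl(-\vp^*(-\mu)-\psi^*(\mu)\bigr)$, computes the conjugates (whose indicator structure encodes exactly the constraints $0\le g\le f_+$ and $\psi^*(\mu)=\|\mu+f\|_{\lipom^*}$), and reads off the existence of the optimal $g$ together with the bound. You instead run the Hahn--Banach separation argument directly, by contradiction: you separate the weak-$*$ compact convex set $K=\{f-g:0\le g\le f_+\}$ from the weak-$*$ closed convex ball $B=\{\|\mu\|_{\lipom(A)^*}\le C_0\}$, identify the separating functional $\phi\in C(A)$ as (after extension by zero) a genuine element of $\lipom(A)$ with $C_0|\nab\phi|_{L^\infty}\le\lambda$ by testing $B$ against the normalized dipoles $C_0(\delta_x-\delta_y)/|x-y|$ and the rescaled point masses $C_0\delta_x/\dist(x,\om\sm A)$, and then test the other inequality against the extremal $g=\1_{\{\phi>0\}}f_+$ to reduce to the hypothesis applied to the admissible nonnegative competitor $\phi_-$. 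The two arguments are dual in flavor: Fenchel--Rockafellar is essentially the abstract packaging of exactly this separation, so the paper's proof is shorter and hides the Kantorovich--Rubinstein identification inside the computation of $\psi^*$, while yours is more self-contained and makes the mechanism --- why the dual variable must be Lipschitz vanishing off $A$, and where attainment of the optimal $g$ comes from (weak-$*$ compactness of the order interval) --- fully explicit. One small remark: the case $C_0=0$ you defer to a limiting argument can also be dispatched directly, since the hypothesis with $C_0=0$ forces $f\ge0$ so that $g=f$ works; but your lower-semicontinuity argument is also fine.
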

\begin{proof}
The proof uses convex analysis. Let $X= C(A)$ denotes the space of  continuous functions and for $\xi\in X$  let
$$
 \vp(\xi)= \int
\xi_+ \, df_+ \quad\text{and}\quad \psi(\xi ) = \begin{cases} +
\infty  & \text{if $|\nab \xi|_{L^\infty(A)}
> 1$ or  $\xi\notin\lipom(A)$}\\
- \int  \xi df  & \text{otherwise}\end{cases} .$$ Then $\psi$ is
lower semicontinuous because $\{\xi\in\lipom(A)\mid |\nab \xi|_{L^\infty}\le
1\}$ is closed under uniform convergence, and $\vp$ is continuous.
Moreover both functions are convex, and finite for $\xi = 0$. Then
the theorem of Fenchel-Rockafellar (see for instance \cite{et})
yields
$$\inf_X\(\vp+\psi\) = \max_{\mu\in X^*}\(-\vp^*(-\mu) - \psi^*(\mu)\),$$
where $X^*$ is the dual of $X$, i.e. the Radon measures on $A$  and
$$\vp^*(\mu) = \sup_{\xi\in X}\int \xi\,d\mu - \int \xi_+\,df_+ =  \begin{cases} 0 & \text{if } 0\le\mu\le f_+
  \\ +\infty  & \text{otherwise}\end{cases}, $$
$$\psi^*(\mu) = \sup_{\substack{\xi\in \lipom\\ |\nab\xi|_\infty\le 1}} \int\xi \,d\mu + \int \xi \,df =  \|\mu + f\|_{\lipom^*}. $$
We deduce that
$$\inf_{\substack{\xi\in \lipom\\ |\nab \xi|_{L^\infty}\le 1}} \int \xi_+\,df_+ - \int \xi\,df= \max_{0\le -\mu\le f_+}\( - \|\mu + f\|_{\lipom^*}\)$$
and then the existence of a Radon measure $g$ such that $-g$ maximizes the right-hand side, ie such that $0\le g\le f_+$ and
$$ - \|f-g\|_{\lipom^*} = \inf_{\substack{\xi\in \lipom\\ |\nab \xi|_{L^\infty}\le 1}} \int \xi_+\,df_+ - \int \xi\,df.$$ But
\begin{eqnarray*}
\inf_{ \substack{\xi\in \lipom\\ |\nab \xi|_{L^\infty}\le 1}} \int \xi_+ \, d f_+
- \int \xi
\, d f & = &  -\sup_{\substack{\xi\in \lipom\\ |\nab \xi|_{L^\infty}\le 1}} \( \int \xi \, d f
- \int \xi_+
\, d f_+\) \\
& = & - \sup_{\substack{\xi\in \lipom\\ |\nab \xi|_{L^\infty}\le 1}}
 \(\int \xi_+ \,d( f- f_+)- \int  \xi_-\,df \) \\& = & - \sup_{\substack{\xi\in \lipom\\ |\nab \xi|_{L^\infty}\le 1}}    \( - \int\xi_-\,df\) = \inf_{\substack{\xi\in \lipom\\ |\nab \xi|_{L^\infty}\le 1}}  \int \xi_- \, df .
\end{eqnarray*}
The assumption of the lemma implies
 that this last right-hand side is $\ge - C_0 $ therefore
$$\|f-g\|_{\lipom(A)^*}  \le C_0.$$
\end{proof}

The second,  less sophisticated, displacement result is
\begin{lem} \label{easytransport}
Assume $f$ is a finite Radon measure supported in $ \om$ and such
 that $f(\om)\ge 0$. Then there exists $0\le g\le f_+$ such that
  for any Lipschitz function $\xi$ $$\int_\om \xi\,d(f-g) \le 2
  \diam(\om) |\nab \xi|_{L^\infty(\om)}
 f_-(\om).$$
 \end{lem}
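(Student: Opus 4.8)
The plan is to use the hypothesis $f(\om)\ge 0$ to normalise the mass of $g$ so that $f-g$ becomes a \emph{balanced} signed measure (total mass zero), and then to test $f-g$ against $\xi$ minus a well‑chosen constant, exploiting that a balanced measure supported in a set of diameter $\diam(\om)$ only sees the oscillation $\le \diam(\om)\,|\nab\xi|_{L^\infty}$ of a Lipschitz function. This is the naive analogue of the duality argument behind Lemma~\ref{transport}, but requires no convex analysis.

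Concretely I would argue as follows. If $\om=\varnothing$ or $f_+(\om)=0$, then the hypothesis $f(\om)\ge 0$ forces $f\equiv 0$ and there is nothing to prove; so assume otherwise and fix a point $x_0\in\om$. Since $0\le f(\om)\le f_+(\om)$, set
\[
g:=\frac{f(\om)}{f_+(\om)}\,f_+ ,
\]
which is a positive measure satisfying $0\le g\le f_+$ and $g(\om)=f(\om)$. Then $f_+-g$ is a positive measure with $(f_+-g)(\om)=f_+(\om)-f(\om)=f_-(\om)$, and writing
\[
f-g=(f_+-g)-f_-
\]
exhibits $f-g$ as a difference of two positive measures, each of total mass $f_-(\om)$, both supported in $\overline\om$; in particular $(f-g)(\om)=0$.

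It then remains to estimate $\int_\om\xi\,d(f-g)$ for a Lipschitz $\xi$. Using $(f-g)(\om)=0$ to replace $\xi$ by $\xi-\xi(x_0)$, then the bound $|\xi(x)-\xi(x_0)|\le |\nab\xi|_{L^\infty(\om)}|x-x_0|\le \diam(\om)\,|\nab\xi|_{L^\infty(\om)}$ valid on $\overline\om$ (hence on the support of $f-g$), together with the positivity of $f_+-g$ and of $f_-$:
\begin{align*}
\int_\om \xi\,d(f-g)
&= \int_\om (\xi-\xi(x_0))\,d(f_+-g) - \int_\om (\xi-\xi(x_0))\,df_- \\
&\le \diam(\om)\,|\nab\xi|_{L^\infty(\om)}\big[(f_+-g)(\om)+f_-(\om)\big] \\
&= 2\,\diam(\om)\,|\nab\xi|_{L^\infty(\om)}\,f_-(\om),
\end{align*}
which is exactly the asserted inequality.

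There is no genuine obstacle here, only a few points needing care. The estimate has to be one‑sided for \emph{every} Lipschitz $\xi$, not just positive ones, and this is precisely what dictates the normalisation $g(\om)=f(\om)$: any other choice of $g(\om)$ would leave a term $(f-g)(\om)\,\xi(x_0)$ of indefinite sign after subtracting $\xi(x_0)$. One should also record the degenerate cases $\om=\varnothing$ and $f_+(\om)=0$, both of which collapse to $f\equiv0$. Finally, the displayed chain uses the (standard, and implicit already in Lemma~\ref{transport}) convention that a Lipschitz function on $\om$ has Lipschitz constant $|\nab\xi|_{L^\infty(\om)}$ on $\overline\om$, which contains $\supp(f-g)\subset\supp f\cup\supp g$.
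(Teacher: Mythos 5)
Your proof is correct and is essentially the paper's own: the measure you take, $g=\tfrac{f(\om)}{f_+(\om)}f_+$, is exactly the paper's $g=f_+\bigl(1-\tfrac{f_-(\om)}{f_+(\om)}\bigr)$, and you then use the same balanced-mass observation that $(f-g)(\om)=0$. The only cosmetic difference is that you subtract the fixed value $\xi(x_0)$ while the paper subtracts the average $\overline\xi$ of $\xi$ over $\om$; both yield $|\xi-\text{const}|\le\diam(\om)\,|\nab\xi|_{L^\infty}$ on $\om$ and hence the same bound.
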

\begin{proof} This follows from the previous Lemma but can
 be proved directly by letting (assuming $f\neq 0$,
 otherwise  $g = 0$ is the answer),
$$ g = f_+\(1 - \frac{f_-(\om)}{f_+(\om)}\).$$
Then $g$ is positive because $f(\om)\ge 0$ implies $f_-(\om)\le f_+(\om)$ and
$$\int\xi\,d(f-g) = \int\xi\,d\(f_+  \frac{f_-(\om)}{f_+(\om)} - f_-\) = \int (\xi - \overline\xi)\,d\(f_+  \frac{f_-(\om)}{f_+(\om)} - f_-\),$$
 where $\overline\xi$ is the average of $\xi$ over $\om$,
 and the right-hand side is clearly  bounded above by
 $2\diam(\om) |\nab \xi|_\infty
 f_-(\om)$.
\end{proof}

\subsection{Mass displacement in the balls}
\begin{defi}
For  $B\in\Bep\cap\Baep$. We let
$$\fepB = (e_\ep - \Lap\nuep)\indic_{B\cap \om_\ep} .$$
where $\lapr$ is defined in \eqref{lamba} and we have set $\Lap = \laprho$.
\end{defi}
This corresponds to the excess energy in the balls i.e. the energy
remaining after subtracting off the expected value from the ball
construction. There is a difference of order $|\nuep|(B) \log \cba$ between
$f_\ep(B)$ and $\fepB(B)$  which will be dealt with
later.
\begin{pro}\label{lemgood}There exists $\ep_0,C>0$ such that for any $\ep<\ep_0$, and  any $B\in \Bep\cap\Baep$, there exists a positive measure
  $\gepB$ defined in $B\cap \om_\ep$ and such that
\begin{equation}\label{appxi}\gepB\le e_\ep +\Lap (\nuep)_-
\quad\text{and}\quad  \int_{B\cap \om_\ep}\xi\,d(\fepB - \gepB)
 \le C|\nab\xi|_{L^\infty(B\cap \om_\ep)} |\nuep |(B),
 \end{equation}
for any Lipschitz function $\xi$ vanishing on $\om_\ep\sm B$.
\end{pro}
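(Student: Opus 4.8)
The plan is to produce $\gepB$ by applying the transport Lemma~\ref{transport} to the finite signed measure $f:=\fepB$ on the compact set $A:=\overline{B\cap\om_\ep}$, taking for the open set in that lemma $\om_\ep$ itself, so that $\lipom(A)$ is the space of Lipschitz functions vanishing on $\om_\ep\sm B$. If $\nuep(B)=0$ there is nothing to prove ($\fepB=e_\ep\indic_{B\cap\om_\ep}\ge 0$, take $\gepB=\fepB$), so assume $|\nuep|(B)\ge 2\pi$. Since $\Lap\ge 0$ one may write $e_\ep-\Lap\nuep=(e_\ep+\Lap(\nuep)_-)-\Lap(\nuep)_+$ as a difference of two nonnegative measures, whence $(\fepB)_+\le(e_\ep+\Lap(\nuep)_-)\indic_{B\cap\om_\ep}$; the lemma then yields a positive $\gepB$ with $0\le\gepB\le(\fepB)_+\le e_\ep+\Lap(\nuep)_-$ — the first half of \eqref{appxi} — and with $\|\fepB-\gepB\|_{\lipom(A)^*}\le C_0$ — the second half, provided we take $C_0=C\,|\nuep|(B)$. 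So everything reduces to verifying the hypothesis of Lemma~\ref{transport}: for every positive Lipschitz $\xi$ vanishing on $\om_\ep\sm B$,
\begin{equation}\label{pretransp}
\int\xi\,d\fepB \;=\; \int_{B\cap\om_\ep}\xi\,e_\ep \;-\; 2\pi\Lap\sum_{B'}d_{B'}\,\xi(a_{B'})\;\ge\;-\,C\,|\nuep|(B)\,|\nab\xi|_{L^\infty(B\cap\om_\ep)},
\end{equation}
the sum running over the small balls $B'\in\Bsep$, $B'\subset B$, with $\dist(B',{\om_\ep}^c)>\ep$, so that $\nuep\!\restriction\!B=2\pi\sum_{B'}d_{B'}\delta_{a_{B'}}$.

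To prove \eqref{pretransp} I would run the ball construction of Proposition~\ref{jerr} inside $B$, getting the nested collections $\BBrep$ for $\sqrt\ep\le r\le\rho$, and use its monotonicity (the ``lower bounds on annuli''): when the stage parameter passes from $r$ to $r+dr$, a ball of the construction carrying degree $d$ deposits at least $\pi|d|\,dr/r$ of energy in the disjoint annular region it sweeps. At stage $r$ I distribute the amount $\pi|d_{B''}|\,dr/r$ available from each $B''\in\BBrep$ among the small balls $B'\subset B''$ proportionally to $|d_{B'}|$ (so each $B'$ gets at most $\pi|d_{B'}|\,dr/r$), and weight by $\xi(a_{B'})$; since the swept region lies within $2r$ of $a_{B'}$, the displacement error integrates to at most $2\pi|d_{B'}|\,|\nab\xi|_{L^\infty}\!\int_{\sqrt\ep}^\rho dr=O(\rho)\,|d_{B'}|\,|\nab\xi|_{L^\infty}$, summing over $B'$ to $\le C|\nuep|(B)|\nab\xi|_{L^\infty}$. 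Adding the core bound $e_\ep(B')\ge\pi|d_{B'}|(\hal\lep-\log\cba-C_0)$ of Proposition~\ref{goodbad} at $r=\sqrt\ep$ (its $\xi$-weighting error being $\le\sqrt\ep\,|\nab\xi|_{L^\infty}e_\ep(B)\le\ep^{1/2-\beta}|\nab\xi|_{L^\infty}\le C|\nuep|(B)|\nab\xi|_{L^\infty}$), and using $\log(\rho/\sqrt\ep)+\hal\lep=\log(\rho/\ep)$ with $\pi(\log(\rho/\ep\cba)-C_0)=2\pi\Lap$, I expect to obtain
$$\int_{B\cap\om_\ep}\xi\,e_\ep\;\ge\;\sum_{B'}\bigl(2\pi\Lap|d_{B'}|-\pi|d_{B'}|\,\Delta(B')\bigr)\xi(a_{B'})\;-\;C\,|\nuep|(B)\,|\nab\xi|_{L^\infty},$$
where, writing $B''_r$ for the ball of $\BBrep$ containing $B'$, the \emph{cancellation deficit}
$$\Delta(B')\;=\;\int_{\sqrt\ep}^\rho\Bigl(1-\frac{|d_{B''_r}|}{\sum_{B'''\subset B''_r}|d_{B'''}|}\Bigr)\frac{dr}{r}\;\ge\;0$$
is positive exactly when $B'$ is cancelled, at some stages, by nearby small balls of the opposite sign. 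Together with \eqref{pretransp} and $|d_{B'}|\ge d_{B'}$, this reduces the problem to the single estimate
$$\pi\sum_{B'}|d_{B'}|\,\Delta(B')\,\xi(a_{B'})\;\le\;2\pi\Lap\sum_{B'}(|d_{B'}|-d_{B'})\,\xi(a_{B'})\;+\;C\,|\nuep|(B)\,|\nab\xi|_{L^\infty}.$$

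Here the cancellation structure must be exploited rather than fought: the deficit $\Delta(B')$ and the favourable term $2\pi\Lap(|d_{B'}|-d_{B'})\xi$ it must be billed to both arise from the same opposite-sign mergings. Grouping each $B'$ with the small balls it cancels against and letting $\tau$ be the corresponding merging scale, I would use: (i) distinct small balls of $\Bsep$ lie in distinct balls already at stage $\sqrt\ep$, so they merge only at stages $\tau\in[\sqrt\ep,2\rho]$, a range on which $\tau\log(1/\tau)$ is bounded by a universal constant; (ii) $\log(\rho/\tau)\le 2\Lap$ once $\ep$ is small, because $\tau\ge\sqrt\ep$ while $\ep\,\cba\le\ep^{1-\beta}=o(\sqrt\ep)$; (iii) $|\xi(a_{B'})-\xi(a_{B'''})|\le C\tau|\nab\xi|_{L^\infty}$ for partners merging at scale $\tau$; and (iv) $\xi(a_{B'})\le 2\rho|\nab\xi|_{L^\infty}$ (as $\xi$ vanishes off $B$). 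On the model case of a $+1/-1$ dipole merging at scale $\tau$, where $\Delta(B'_1)=\Delta(B'_2)=\log(\rho/\tau)$, these bounds give $\pi\log(\rho/\tau)(\xi(a_1)+\xi(a_2))\le 2\pi\log(\rho/\tau)\,\xi(a_2)+C\tau\log(\rho/\tau)\,|\nab\xi|_{L^\infty}\le 4\pi\Lap\,\xi(a_2)+C|\nab\xi|_{L^\infty}$, exactly of the desired form; the general case is the same, with a cancelling group's deficit distributed among its members.

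The main obstacle is precisely the combinatorial bookkeeping of the merge structure behind the two middle steps: one must attribute each portion of $e_\ep$ to a single small ball with no double counting, reconcile the $\xi$-weighting of the extracted energy (naturally attached to enclosing-ball centers) with that of the subtracted vorticity (attached to small-ball centers) so that every residual error carries a genuine power of the \emph{local} scale and the sums over scales stay $O(1)$ rather than blowing up like $\lep$, and organize the charging so that each cancelling group balances itself. A smaller separate issue is that Proposition~\ref{goodbad} gives no bound on balls meeting the $\ep$-neighbourhood of $\p\om_\ep$, so boundary balls $B$ (with $\dist(B,{\om_\ep}^c)\le\ep$) need their own treatment; but the small balls entering $\nuep\!\restriction\!B$ all lie at distance $>\ep$ from ${\om_\ep}^c$, so one runs the same argument with the ball construction restricted to $\{x\in\om_\ep:\dist(x,{\om_\ep}^c)>\ep\}$, the leftover part of $\fepB$ being a nonnegative restriction of $e_\ep$ that needs no transport.
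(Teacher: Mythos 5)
Your first step --- reducing the proposition, via Lemma~\ref{transport}, to the bound $\int\xi\,d\fepB\ge -C|\nab\xi|_{L^\infty}|\nuep|(B)$ for positive Lipschitz $\xi$ vanishing on $\om_\ep\setminus B$, together with the verification that $0\le\gepB\le(\fepB)_+\le e_\ep+\Lap(\nuep)_-$ --- is correct and identical to what the paper does.

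The verification of the key lower bound, however, is a different strategy from the paper's and contains the gap you yourself flag as ``the main obstacle''. Attributing the annular energy at each growth stage $r$ to individual small balls proportionally to $|d_{B'}|$, and then charging the accumulated ``cancellation deficit'' $\Delta(B')$ (which may reach order $\hal\lep$ when opposite-sign vortices merge at scale close to $\sqrt\ep$) against the favourable term $2\pi\Lap(|d_{B'}|-d_{B'})\xi(a_{B'})$, is shown to work only for an isolated $\pm 1$ dipole. The general merge tree produces scale-dependent groupings of mixed sign, and one would need to simultaneously (i) partition $e_\ep$ across scales with no double counting, (ii) reconcile the $\xi$-weight at which each piece of annular energy sits with the $\xi$-weight at the small-ball centers, and (iii) organize the billing so the total error stays $O(1)$ per vortex rather than $O(\lep)$. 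You do not carry this out, and it is not routine. The paper avoids it by a layer-cake argument that never attributes energy to individual balls: writing $\int\xi\,d\fepB=\int_0^\infty\fepB(E_t)\,dt$ with $E_t=\{\xi\ge t\}$ and, for $t>\ep|\nab\xi|_{L^\infty}$, attaching to each vortex center $a\in E_t$ the scale $r(a,t)=\sup\{r\in[\sqrt\ep,\rho):B_{a,r}\subset E_t\}$ (and $0$ if none), the resulting collection $\{B^t_a\}_a$ is \emph{disjoint}, Proposition~\ref{goodbad} gives $e_\ep(E_t)\ge\bigl|\sum_a(\Lap-\hal\log\tfrac\rho{r(a,t)})_+\nuep(a)\bigr|$, and subtracting $\Lap\nuep(E_t)$ yields $\fepB(E_t)\ge -\hal\sum_a\log(\rho/r(a,t))_+|\nuep|(a)-\Lap\sum_{r(a,t)=0}|\nuep|(a)$ using only $|(\Lap-\hal\log\tfrac\rho r)_+ -\Lap|\le\hal\log\tfrac\rho r$. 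Then the elementary inequality $\xi(a)-t\le r(a,t)|\nab\xi|_{L^\infty}$ turns the $t$-integral of the deficit into $\int_0^{\xi(a)}\log(\rho/r(a,t))_+\,dt\le\rho|\nab\xi|_{L^\infty}$ --- exactly the $O(1)$-per-ball control your scheme is straining for, obtained without any combinatorics because the level sets of $\xi$ pair each vortex with its correct scale automatically. (Your boundary caveat is also absorbed: for $t>\ep|\nab\xi|_{L^\infty}$, $\xi$ vanishing off $B\cap\om_\ep$ forces $\dist(E_t,{\om_\ep}^c)>\ep$.)
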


\begin{proof}
To prove the existence of $\gepB$, in view of Lemma \ref{transport} and since $(\fepB)_+ = e_\ep + \Lap (\nu_\ep)_-$ on $B$
it suffices to prove that for any {\em positive} function $\xi$  defined on $B$ and vanishing on $B\sm \om_\ep$ we
have
\begin{equation}\label{dur}\int \xi\,d\fepB \ge - C|\nab\xi|_{L^\infty(B)}|\nuep |(B).\end{equation}
We turn to the proof of \eqref{dur}. Let $B\in \Bep\cap\Baep$ and $\xi$ be as above. Then
\begin{equation}\label{machin}\int \xi\,d\fepB  = \int_0^{+\infty} \fepB\(E_t\cap B\)\,dt,\end{equation}
where we have set $E_t=\{x\in B\mid \xi(x)\ge t\}$ and $\fepB(A) = \int_A \fepB.$

We will divide the integral \eqref{machin} into
$\int_0^{t_\ep}+\int_{t_\ep}^{+\infty}$,
 with $t_\ep = \ep |\nab\xi|_{L^\infty}$.
 The first integral is straightforward to bound from below.  Indeed $(\fepB)_-(B)\le C\lep|\nuep|(B)$ hence
\begin{equation}\label{morceaupetit}  \int_0^{t_\ep} \fepB\(E_t\)\,dt\ge - C \ep\lep |\nab\xi|_{L^\infty} |\nuep |(B)\ge - C |\nab\xi|_{L^\infty} |\nuep |(B).\end{equation}

On the other hand, if $t>t_\ep$, and this motivated our choice of $t_\ep$, then since $\xi = 0$ in $B\sm\om_\ep$ we have  $\dist(E_t,{\om_\ep}^c)>\ep$. Let then $t>t_\ep$, and $a\in E_t$ be a point in the support of $\nuep $. Then for
any $r\in[\sqrt\ep,\rho]$, there exists a ball $B_{a,r}\in\Brep$
containing $a$. Since $\{\Brep\}$ is monotonic with respect to $r$, $B_{a,r}\subset B$. We call $$r(a,t)=\sup
\{ r\in [\sqrt\ep,\rho), B_{a,r}\subset E_t\}$$ if this set is nonempty, and $0$ otherwise.  We then let
$$B_a^t = B_{a,r(a,t)}.$$

If $0<r(a,t)<\rho$ then $r(a,t)$ bounds from above the distance of $a$ to the complement of $E_t$. In particular
\begin{equation}\label{boundr} \xi(a) - t\le
r(a,t)|\nab\xi|_{L^\infty}.\end{equation} Indeed for any
$r(a,t)<s<\rho$ we have $B_{a,s}\subset B$ and $B_{a,s}\cap(E_t)^c
\neq\varnothing$ hence  there exists $b\in B_{a,s}\cap\partial E_t$.
Then  $\xi(a) - \xi(b)\le s |\nab\xi|_{L^\infty}$ and since $\p
E_t\subset \{\xi = t\}$ we deduce $\xi(a) -t\le s
|\nab\xi|_{L^\infty}$, proving \eqref{boundr} by making $s$ tend to
$r(a,t)$ from above.

A second fact is that if $r(a,t) = 0$, then
$\overline{B_{a,\sqrt\ep}}$ intersects $B\sm E_t$
and as above we deduce
\begin{equation}\label{boundrr}\xi(a) - t \le \sqrt\ep|\nab\xi|_{L^\infty(B)}.\end{equation}

The third fact is that   the collection
$\{B_a^t\}_a$, where $a$ ranges over $E_t$ and the $a$'s for
which $r(a,t) = 0$ have been excluded, is disjoint. Indeed take
$a,b\in E_t$ and assume that $r(a,t)\ge r(b,t)$. Then, since
$\B_{r(a,t)}$ is disjoint, the balls $B_{a,r(a,t)}$ and
$B_{b,r(a,t)}$ are either equal or disjoint. If they are disjoint we
note that $r(a,t)\ge r(b,t)$ implies that $B_{b,r(b,t)}\subset
B_{b,r(a,t)}$ and therefore $B_b^t = B_{b,r(b,t)}$ and $B_a^t =
B_{a,r(a,t)}$ are disjoint. If they are equal, then
$B_{b,r(a,t)}\subset E_t$ and therefore $r(b,t)\ge r(a,t)$,
which implies $r(b,t) = r(a,t)$ and then $B_b^t = B_a^t$.

Now, for any $B'\in \{B_a^t\}_a$  we have $B'\subset E_t$ and
$\dist(E_t,{\om_\ep}^c)>\ep$ hence $\dist(B',{\om_\ep}^c)>\ep$ and
from Proposition~\ref{goodbad},  we have, since $\lapr = \laprho
-\hal \log\frac\rho r$,
$$e_\ep(B')\ge |\nuep (B')|\(\Lap - \hal \log\frac\rho r\)_+ ,$$
where $r$ is the common value of $r(a,t)$ for $a$'s in $B'$ which
are in the support of $\nuep $. We may rewrite the above as
$$e_\ep(B')\ge \left|\sum_{a\in B'\cap\supp\nuep } \nuep (a)
 \(\Lap - \hal \log\frac\rho{r(a,t)}\)_+ \right|,$$
and summing over $B'\in \{B_a^t\}_a$ we deduce
$$e_\ep(E_t\cap B)\ge \left|\sum_{a\in \P_t} \(\Lap - \hal\log\frac\rho{r(a,t)}\)_+ \nuep (a)\right|,$$
where $\P_t$ is the set of points in $E_t\cap\supp\nuep $ such
that $r(a,t)>0$. We will let $\Q_t$
be the set of points in $E_t\cap\supp\nuep $ such that
$r(a,t)=0$.

Since $\nuep (E_t) = \nuep (\P_t) + \nuep (\Q_t)$, subtracting from
the above $\Lap \nuep (E_t)$  we find
$$\fepB\(E_t\)  \ge -\sum_{a\in \Q_t} |\nuep |(a) \Lap-
\hal \sum_{a\in \P_t} |\nuep |(a) \log\frac\rho{r(a,t)}.$$
 From \eqref{boundrr}, a given $a\in \supp\nuep \cap B$ can belong to $\Q_t$ only if $|t-\xi(a)|\le \sqrt\ep|\nab\xi|_{L^\infty}$. Therefore  integrating  the above with respect to $t$ yields,  using the fact that $t\le \xi(a)$ if $a\in E_t$, that
 $$ \int_{t_\ep}^{\infty} \fepB(E_t)\,dt \ge -
  \sum_{a\in \supp\nuep \cap B}|\nu_\ep|(a)
  \(\int_{\xi(a)- \sqrt{\ep} |\nab \xi|_{L^\infty}}^{\xi(a)+
   \sqrt{\ep} |\nab \xi|_{L^\infty}}  \Lambda_\ep^\alpha\,dt+
   \hal\int_0^{\xi(a)}\(\log\frac\rho{r(a,t)}\)_+\,dt\)$$ hence
 $$\int_{t_\ep}^{\infty} \fepB(E_t)\,dt\ge  -
 2\Lap \sqrt\ep |\nab\xi|_{L^\infty}|\nuep |(B) -
 \hal\sum_{a\in \supp\nuep \cap B} |\nuep |(a)\int_0^{\xi(a)}
 \(\log\frac\rho{r(a,t)}\)_+\,dt.$$
We now note that --- since $\Lap\le \hal\lep$ --- $\sqrt\ep\Lap$ is bounded independently of $\ep\le
1$ and, using the inequality \eqref{boundr}, we get
$$\int_0^{\xi(a)}\(\log\frac\rho{r(a,t)}\)_+\,dt\le \int_0^{\xi(a)}\(\log\frac{\rho|\nab\xi|_{L^\infty}}{\xi(a) - t}\)_+\,dt = \int_{\xi(a) - \rho|\nab\xi|_{L^\infty}}^{\xi(a)}\log\frac{\rho|\nab\xi|_{L^\infty}}{\xi(a) - t}\,dt, $$
and the rightmost integral is equal, by change of variables
$u=\frac{\xi(a)-t}{\ro |\nab \xi|_{L^\infty} } $,
 to $\rho |\nab\xi|_{L^\infty}$. Therefore
$$ \int_{t_\ep}^{+\infty} \fepB(E_t)\,dt \ge - C |\nuep |(B)|\nab\xi|_{L^\infty}.$$
In view of \eqref{machin}, adding \eqref{morceaupetit} yields the result.

\end{proof}

\begin{remark}\label{rqdurbis} Note that in  the proof of \eqref{dur}, the final radius $\rho$ may be replaced by any $r\in(\sqrt\ep,\rho)$. This yields the  following result: Assume that   $r\in(\sqrt\ep,\rho)$ and that $B\in\Brep$ is included in some ball in $\Bep\cap\Baep$. Then, for any positive function $\xi$ vanishing on $B\sm\om_\ep$,
\begin{equation}\label{durbis}\int_B (e_\ep -  \lapr\nuep)\xi\ge - C|\nab\xi|_{L^\infty(B)}|\nuep |(B).\end{equation}
\end{remark}
We record the following lower bounds:

\begin{pro}\label{lowb} For $\ep$ small enough and $B\in\Bep\cap\Baep$:
\begin{equation}\label{prolb1}
e_\ep(\om_\ep\cap B) \ge  \(\frac18\lep-C\)|\nuep
|(B).\end{equation} For $\ep $ small enough and $B\in \Bep \cap
\Baep$ such that $\dist(B, \om_\ep^c)>\ep$, we have
\begin{equation} \label{prolb2}
 \gepB(\om_\ep\cap B) \ge \(\frac18\lep-C\)|\nuep |(B) - \dl |\nuep (B)|.\end{equation}
If in addition  $d_B<0$, then
\begin{equation}\label{bipmoins}  \gepB(\om_\ep\cap B) - \(\dl - \Lap\)
\nuep(B)  \ge \(\frac{1}{8}\lep - C\) |\nuep|(B).\end{equation}
\end{pro}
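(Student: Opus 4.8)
The plan is to derive all three inequalities from two ingredients: a summation of the ball–construction lower bounds over the small balls sitting inside $B$ (a slightly sharper version of the computation in Lemma~\ref{lembornepetits}), together with a mass–conservation property of the transported measure $\gepB$.

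\emph{Step 1 — the bound \eqref{prolb1}.} I would apply Proposition~\ref{jerr} in $U_\a$ at scale $r=\sqrt\ep$ with $\overline C=\ep^{-1/4}$. Since $e_\ep(\om_\ep)\le\ep^{-\beta}$ and $\beta<\tfrac14$, for $\ep$ small the alternative $e_\ep(\cdot)\ge\overline C\log(\sqrt\ep/\ep)=\tfrac12\ep^{-1/4}\lep$ cannot occur, so every small ball $B'\in\B^{B,\sqrt\ep}_\ep$ with $\dist(B',{U_\a}^c)>\ep$ satisfies $e_\ep(B')\ge\pi|d_{B'}|\big(\log\tfrac{\sqrt\ep}{\ep\,\ep^{-1/4}}-C\big)=\pi|d_{B'}|\big(\tfrac14\lep-C\big)$. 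By Definition~\ref{defi1} these are exactly the small balls whose centers carry the $\nuep$–atoms lying in $B$; they are disjoint, contained in $\om_\ep\cap B$, and since $e_\ep\ge0$ one gets
$$e_\ep(\om_\ep\cap B)\ \ge\ \sum_{B'}e_\ep(B')\ \ge\ \pi\big(\tfrac14\lep-C\big)\sum_{B'}|d_{B'}|\ =\ \pi\big(\tfrac14\lep-C\big)\naep\ =\ \big(\tfrac18\lep-C\big)|\nuep|(B),$$
which is \eqref{prolb1}, valid for every $B\in\Bep\cap\Baep$ with no distance hypothesis.

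\emph{Step 2 — mass conservation.} I claim the measure $\gepB$ of Proposition~\ref{lemgood} may be chosen so that, in addition to \eqref{appxi}, $\gepB(\om_\ep\cap B)\ge\fepB(\om_\ep\cap B)$. If $\fepB(\om_\ep\cap B)\le0$ this is immediate since $\gepB\ge0$. Otherwise, recall from Lemma~\ref{transport} that $\gepB$ minimizes $\|\fepB-g\|_{\lipom(\bar B)^*}$ over $0\le g\le(\fepB)_+$; writing $g=(\fepB)_+-h$ with $h\ge0$, the displaced mass $h$ can be taken with $h(\bar B)\le(\fepB)_-(\bar B)$. Indeed, if $h(\bar B)>(\fepB)_-(\bar B)$ then in the optimal relative Kantorovich transport of $h$ onto the atoms of $(\fepB)_-$ a positive excess of $h$ must be evacuated through $\p B$; returning that excess to $g$ (which keeps $g\le(\fepB)_+$, hence preserves \eqref{appxi}) yields a competitor with no larger relative–Lipschitz distance and with $h(\bar B)=(\fepB)_-(\bar B)$. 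Then $\gepB(\om_\ep\cap B)=(\fepB)_+(\bar B)-h(\bar B)\ge(\fepB)_+(\bar B)-(\fepB)_-(\bar B)=\fepB(\om_\ep\cap B)$. I expect this step to be the main obstacle, since the relative Lipschitz dual norm does not by itself control total mass; the substance of the argument is that the displaced mass, confined to a ball of diameter $\le2\rho$, cannot genuinely escape, so throwing away more than the total negative mass never decreases the distance.

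\emph{Step 3 — conclusion.} Now assume $\dist(B,\om_\ep^c)>\ep$, so that $\om_\ep\cap B=B$ and $\fepB(B)=e_\ep(B)-\Lap\nuep(B)$. Since $0\le\Lap\le\dl$ by Proposition~\ref{goodbad}, one has $\Lap\nuep(B)\le\dl|\nuep(B)|$, and Steps 1–2 give $\gepB(\om_\ep\cap B)\ge e_\ep(B)-\Lap\nuep(B)\ge\big(\tfrac18\lep-C\big)|\nuep|(B)-\dl|\nuep(B)|$, which is \eqref{prolb2}. For \eqref{bipmoins}, observe that when $\dist(B,\om_\ep^c)>\ep$ the small balls contained in $B$ cover all the zeros of $u_\ep$ in $B$, so by additivity of the degree $\nuep(B)=2\pi d_B$; hence $d_B<0$ forces $\nuep(B)<0$, and
$$\gepB(\om_\ep\cap B)-(\dl-\Lap)\nuep(B)\ \ge\ \fepB(B)-(\dl-\Lap)\nuep(B)\ =\ e_\ep(B)-\dl\nuep(B)\ =\ e_\ep(B)+\dl|\nuep(B)|\ \ge\ e_\ep(B),$$
so that \eqref{prolb1} completes the proof.
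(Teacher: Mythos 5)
Your Steps~1 and 3 essentially reproduce the paper's argument. In Step~1 you write $\naep$ where $|\nuep|(B)/(2\pi)$ is meant --- these agree only if $\Bep\cap\Baep$ consists of the single ball $B$ --- though the chain of inequalities is otherwise fine. Step~2 is where you diverge and where there is a genuine gap. The paper obtains the \emph{equality} $\fepB(B\cap\om_\ep)=\gepB(B\cap\om_\ep)$ directly from \eqref{appxi}: since $\dist(B,\om_\ep^c)>\ep$ forces $B\subset\om_\ep$, the vanishing constraint on the test function (which, reading the proof of \eqref{dur} in Proposition~\ref{lemgood} rather than the slightly misstated condition ``vanishing on $\om_\ep\sm B$'', is $\xi=0$ on $B\sm\om_\ep$) is vacuous; hence \eqref{appxi} holds with $\xi\equiv 1$ on $B$, the right-hand side vanishes since $|\nab\xi|_{L^\infty(B)}=0$, and the two-sided character of the estimate (it comes from a bound on $\|\fepB-\gepB\|_{\lipom(B)^*}$) gives equality. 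You instead aim for the weaker $\gepB(B\cap\om_\ep)\ge\fepB(B\cap\om_\ep)$ by modifying the optimal transport in Lemma~\ref{transport}. The heuristic --- that removing the mass ``evacuated through $\p B$'' from $h$ cannot increase the $\lipom(B)^*$ norm --- is directionally right, but as written it is not a proof: you never exhibit the Kantorovich--Rubinstein dual of $\lipom(B)^*$ as a transport problem with $\p B$ acting as a free source/sink, nor do you rule out an optimal plan that simultaneously imports mass through $\p B$ (to feed part of $(\fepB)_-$) and exports part of $h$, in which case the naive ``return the excess to $g$'' surgery is not well defined. You yourself flag this as the main obstacle, and it is: the paper closes it at a stroke by noting that a constant test function is admissible once $B\subset\om_\ep$.
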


The meaning of this lower bound is that $e_\ep(B)$ is not only
bounded below by $\Lap|\nuep(B)|$, which to leading order is
$\hal\lep|\nuep(B)|$ --- this is the positivity of $\gepB$ in the
above proposition --- but also by some constant times
$\lep|\nuep|(B)$, even though the constant is no longer guaranteed to
be the (optimal) value $1/2$. This information is valuable in the
case where $|\nuep(B)|$ is much smaller than $|\nuep|(B)$. The
precise value of the constants is unimportant.
\begin{proof}
 As we noticed, $\cba<(\sqrt\ep/\ep)^\hal$ implies   $\sqrt\ep/(\ep\cba)\ge \ep^{-\frac14}$  thus,   using  Proposition~\ref{goodbad},
\begin{multline*}e_\ep(B\cap\om_\ep)\ge \sum_{\substack{B'\in\Bsep\\B'\subset B\\\dist(B',{\om_\ep}^c)>\ep}} e_\ep(B')\ge \sum_{\substack{B'\in\Bsep\\B'\subset B\\\dist(B',{\om_\ep}^c)>\ep}} \pi|d_{B'}|\(\log\ep^{-\frac14} - C\) = \\
 |\nuep|(B)\(\frac18 \lep - \frac{C}{2}\),\end{multline*} which proves the first
 assertion.
Secondly, note  that from \eqref{appxi}, if $\dist (B,
\om_\ep^c)>\ep$, choosing $\xi$ compactly supported in $\om_\ep$
such that $\xi = 1$ in $B$, we have
$$\fepB(B\cap\om_\ep) =  \gepB(B\cap\om_\ep).$$
Since $\Lap\le \hal\lep$ we deduce \eqref{prolb2} in view of
$$ \gepB(B\cap\om_\ep) = \fepB(B\cap\om_\ep) \ge  |\nuep|(B)
\(\frac18 \lep - C\) - \hal\lep |\nuep(B)|. $$ For the last
assertion, since $\nuep(B) = 2\pi d_B <0$, we write
$$\gepB(B\cap\om_\ep) - \(\hal\lep - \Lap\)\nuep(B) = e_\ep(B\cap\om_\ep) - \hal\lep\nuep(B)\ge e_\ep(B\cap\om_\ep),$$
and this is bounded below using \eqref{prolb1}.
\end{proof}

\subsection{Mass displacement of the remainder}
Proposition~\ref{lemgood} will allow to replace $\fepB$ by the positive $\gepB$, and we have
\begin{equation}\label{sto}\fep - \sum_{B\in\Bep} \fepB = e_\ep\indic_{\Bep^c} +
\sum_{\a}  \(\dl - \Lap\) \nua.\end{equation}
We now proceed to absorb the
negative part of $\fep - \sum \fepB$, which is  $\(\dl - \Lap\) (\nua)_+$. This will be easy if $\cba =\frac{3 \eaep}{\lep}$
and
if not, in view of  \eqref{lamba},  we have
$$ 0\le \dl - \Lap \le \hal\log\naep +C,$$
 which allows to bound the mass of the negative part by $C\sum_\a  \naep(\log\naep +1)$. Following the method in \cite{ss0}  (see also \cite{livre}, Chap.~9), this will be balanced by a lower bound by $c [\naep]^2$
  for the energy on annuli surrounding $U_\a$.

Recall that $U_\alpha = B(x_\alpha,\ell_0)\cap \om_\ep$. We let
$A_\a = B(x_\a,r_1)$, where   $r_1 = 3\ell_0$. Choosing $\ell_0$
small enough, we may require that
$$ \diam(A_\a) < 1\quad\text{and}\quad \(A_\a\cap{\om_\ep}^c\neq\varnothing\implies A_\a\subset \left\{x\mid \dist(x,\partial\om_\ep)<\hal\right\}\).$$
We will denote below by $m'$ a bound, uniform in $\ep$ for the
overlap number  of  the $\{A_\a\}_\a$.

Now we choose $\rho$ such that for any $\ep>0$
$$ \mbox{  $|\Tae|\ge \ell_0,$ where  $\Tae$ is the set of
$t\in(r_0,r_1)$  such that $\{|x-x_\alpha|=t\}\cap\Bep =
\varnothing$},$$ where $r_0=\ell_0$.  Indeed, the number of
$U_\beta$'s which intersect $B(x_\alpha,r_1)$ is bounded by a
certain number $N$ independent of $\ep$ and $\alpha$. Choosing
$\rho=\ell_0 / N$, the sum of the radii of balls in
$\cup_\beta\Bbep$ which intersect $B(x_\alpha,r_1)$ is bounded above
by $\ell_0$, hence $|\Tae|\ge (r_1-r_0)-\ell_0 = \ell_0$.

\subsubsection{Lower bounds on annuli}
For any  $\a$ let
\begin{equation}\label{gepal}
\gepp  =  \frac{1}{4m'}\(e_\ep\indic_{\Bep^c} +\sum_{ B\in  \Bep}  \gepB\) \indic_{A_\a}, \quad    \gepm =  \(\dl - \Lap\) (\nuep)_+\indic_{\Bep\cap\Baep},\end{equation}
and $\gepal = \gepp - \gepm$. We have
$$\tgep - \sum_{\a} \gepal \ge \frac{3}{4} \(e_\ep\indic_{\Bep^c} +\sum_{ B\in  \Bep}  \gepB\) + \sum_\a \(\dl - \Lap\) (\nuep)_-\indic_{\Bep\cap\Baep}.$$
In particular
$$\gepp(A_\a)\le \frac{1}{3m'}\(\tgep - \sum_{\beta}\tilde g_\ep^\beta\)(A_\a).$$

\begin{pro} There exist  $\ep_0,C,c >0$ such that  if   $\beta<1/4$  in \eqref{bornenrj},  then  for any $\ep<\ep_0$ and any  index  $\a$
\begin{equation}\label{toujoursmoins} \gepm(A_\a) \le \pi\naep\(  \beta\lep + C\). \end{equation}
If moreover  $\dist(A_\a,{\om_\ep}^c)>\ep$ then at least one of the following is true:
\begin{equation}\label{mincalmoins} \gepm(A_\a) \le \pi\naep\(  \beta\lep + C\),\quad \gepp(A_\a)\ge c \naep\lep\end{equation}
or
\begin{equation}\label{mincal}  \gepm(A_\a) \le \pi\naep \(\log\naep + C\) ,\quad \gepp(A_\a)\ge c \naepp.\end{equation}
\end{pro}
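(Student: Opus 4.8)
The plan is to prove the three displayed inequalities by combining the already-established bounds from Proposition~\ref{goodbad} and Proposition~\ref{lemgood} with the ``lower bounds on annuli'' technique. First I would deal with \eqref{toujoursmoins}, which is the easy, unconditional estimate. By definition $\gepm = (\dl - \Lap)(\nuep)_+\indic_{\Bep\cap\Baep}$, so $\gepm(A_\a) \le (\dl - \Lap)_+ \|\nua\| = 2\pi \naep (\dl - \Lap)_+$. Now I invoke \eqref{deltalap} with $r = \rho$: since $\Lap = \laprho$, we have $0 \le \dl - \Lap \le \hal(\beta\lep + |\log\rho| + C_0)$, and since $\rho$ is a fixed universal constant, $|\log\rho|$ is absorbed into $C$. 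This gives $\gepm(A_\a) \le \pi\naep(\beta\lep + C)$, which is exactly \eqref{toujoursmoins}.

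Next, for the conditional dichotomy \eqref{mincalmoins}--\eqref{mincal}, I would use the definition \eqref{cba} of $\cba$: either $\cba = M\naep$ (the ``many vortices'' case, when $M\naep \ge 3\eaep/\lep$) or $\cba = 3\eaep/\lep$. In the second case, since $\dl - \Lap = \hal(\log\cba + C_0) = \hal(\log(3\eaep/\lep) + C_0)$, and since $\gepp$ carries a fixed fraction $\frac{1}{4m'}$ of $e_\ep\indic_{\Bep^c} + \sum_B \gepB$ on $A_\a$, which by \eqref{prolb2} summed over the balls $B \in \Bep\cap\Baep$ inside $A_\a$ is at least comparable to $\eaep$ minus lower-order terms, I expect $\gepp(A_\a) \gtrsim \eaep$ and hence the bound $\gepm(A_\a) \le \pi\naep(\log\naep + C)$ can be traded against $\gepp(A_\a) \ge c\naep\lep$ when $\eaep \gtrsim \naep\lep$, giving \eqref{mincalmoins}; when instead $\eaep$ is of order $\naep\lep$ or smaller, then $\log\cba \le \log(C\naep) + C$ gives $\gepm(A_\a) \le \pi\naep(\log\naep + C)$ directly, and I need $\gepp(A_\a) \ge c\naepp$ from an annular estimate.

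The annular estimate is where the real work lies. On the annulus $A_\a \sm U_\a = B(x_\a, r_1)\sm B(x_\a, r_0)$, by our choice of $\rho$ there is a set $\Tae$ of radii $t \in (r_0, r_1)$ with $|\Tae| \ge \ell_0$ on which the circle $\{|x - x_\a| = t\}$ meets no ball of $\Bep$. On such a circle the degree of $u_\ep/|u_\ep|$ equals (up to sign) the total degree enclosed, which away from the boundary is $\naep$ (or at least its algebraic version), and $|u_\ep|$ is bounded below, so $\int_{\{|x-x_\a|=t\}} |\nab_{A_\ep} u_\ep|^2 \gtrsim \naepp/t$ by the standard Cauchy-Schwarz/degree lower bound (the magnetic field contributes lower-order terms controlled using $G_\ep \le \ep^{-\beta}$). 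Integrating over $t \in \Tae$ gives $e_\ep(A_\a \sm U_\a) \gtrsim \naepp \log(r_1/r_0)$ or, more crudely, $\gtrsim \naepp |\Tae|/r_1 \gtrsim c\naepp$. Since this energy lies in $\Bep^c$ (the relevant circles avoid $\Bep$), it contributes to $\gepp$ through the $e_\ep\indic_{\Bep^c}$ term with its fixed fraction $\frac{1}{4m'}$, yielding $\gepp(A_\a) \ge c\naepp$. The main obstacle I anticipate is making the degree-counting on the good circles rigorous when $A_\a$ may cross near $\p\om_\ep$ in the ambient collection but we are in the case $\dist(A_\a, \om_\ep^c) > \ep$ (so this is actually fine), and more substantively, controlling the discrepancy between the algebraic sum $\nuep(B)$ of degrees and the total variation $\naep$ — but for the annular lower bound only the enclosed algebraic degree matters on each circle, and one has to be careful that the relevant ``enclosed degree'' is indeed what appears, using that $\Bep$ is disjoint in $\om_\ep$ and covers the zero set, so the degree on a good circle is the sum of $d_B$ over enclosed balls; a Cauchy-Schwarz in the form $(\sum d_B)^2 \le (\#\text{balls}) \sum d_B^2$ is not what we want, so instead one keeps $\naepp = (\sum|d_B|)^2 \ge (\sum d_B)^2$ and the circle integral controls $(\sum d_B)^2$, which forces a more careful argument splitting $A_\a$ into sub-annuli each enclosing a single ball — this bookkeeping, following \cite{ss0} and \cite{livre} Chap.~9, is the delicate part.
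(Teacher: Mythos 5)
Your treatment of \eqref{toujoursmoins} and of the ``large energy'' branch $\cba=3\eaep/\lep$ matches the paper's, and you correctly identify the remaining branch $\cba=M\naep$ as the one where an annular estimate is needed. The gap is precisely at the point you yourself flag and then wave away. The annular lower bound, via \eqref{cercle}, controls $(\dtep)^2$ where $\dtep$ is the \emph{algebraic} degree enclosed by a good circle $\gamma_t$; what you need is $\naepp=(\sum_B|d_B|)^2$. Your proposed fix -- ``splitting $A_\a$ into sub-annuli each enclosing a single ball'' -- does not close this gap: when positive- and negative-degree balls coexist, the intermediate circles can have small or vanishing degree, and there is no canonical nesting of sub-annuli that recovers the total variation. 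The paper does something structurally different here: it introduces $D_0^+$ (sum of positive degrees inside $B(x_\a,r_0)$) and $D_1^-$ (sum of $|d_B|$ for negative-degree balls inside $B(x_\a,r_1)$) and splits into three sub-cases. When there is substantial cancellation -- either $D_1^->\naep/20$, or $D_0^+\le\naep/10$ with $D_1^-\le\naep/20$ -- it does \emph{not} use the annulus at all; instead it invokes the ball-interior estimates \eqref{bipmoins} and \eqref{prolb2}, which exploit the difference between $|\nuep|(B)$ and $|\nuep(B)|$ (and the sign of $d_B$) to obtain $\gepp(A_\a)\ge c\naep\lep$, i.e.\ \eqref{mincalmoins}. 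Only in the remaining case $D_0^+\ge\naep/10$, $D_1^-\le\naep/20$, where the algebraic enclosed degree $\dtep\ge D_0^+-D_1^-$ is automatically comparable to $\naep$, does it integrate \eqref{cercle} over $t\in\Tae$ to get $\gepp(A_\a)\ge c\naepp$ and hence \eqref{mincal}. Without these two extra sub-cases and the corresponding use of Proposition~\ref{lowb}, your argument does not establish the dichotomy.

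A smaller inaccuracy: you wrote that the degree on a good circle ``equals... $\naep$ (or at least its algebraic version)'' -- it equals only the algebraic sum, never $\naep$ in general, and conflating these is exactly what makes the cancellation sub-cases necessary.
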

\begin{proof}
The bound \eqref{toujoursmoins} follows from  \eqref{gepal}, \eqref{deltalap}. Now assume $\dist(A_\a,{\om_\ep}^c)>\ep$.

First, if $\naep = 0$ then $\gepm = 0$, $\gepp\ge 0$  hence \eqref{mincalmoins} is true.

Second, if $ 3 \eaep/\lep\ge M\naep$ then, since for $B \subset
A_\alpha$, we have  $\gepB(B) = \fepB(B) = e_\ep(B) - \Lap \nuep(B)$
and $\Lap\le \hal\lep$ it follows that
\begin{multline*}
\gepp(A_\a)\ge \frac{1}{4m'} \int_{A_\a}e_\ep- \frac{1}{4m'}\Lap \sum_{B\in \Bep \cap A_\a} |d_B| \ge \frac{1}{4m'} \int_{U_\a} e_\ep
- \frac{1}{4m'}\Lap \sum_{B\in \Bep \cap A_\a} |d_B|
\\
\ge \frac M{12 m'}\naep\lep -
\pi\naep\lep \ge \(\frac M{12 m'}-\pi\)\naep\lep.\end{multline*}Together with
\eqref{toujoursmoins}, this implies \eqref{mincalmoins} if $M$ was
chosen strictly  greater than $12 m'\pi$.
The last case is that where $\cba = M\naep$. Then $\dl - \Lap = \hal\log\naep + C$ and therefore, using \eqref{bornepetits},
\begin{equation}\label{gepm}\gepm(A_\a)\le 2\pi\naep\(\hal\log \naep+C\)\le \naep(\pi\beta\lep +C).\end{equation}
We define
$$D_0^+ = \sum_{\substack{B\in\Bep \\B\subset B(x_\alpha,r_0)\\d_B>0}} d_B,\quad D_1^- = \sum_{\substack{B\in\Bep\\B\subset B(x_\alpha,r_1)\\d_B<0}} |d_B|,$$
and again we distinguish several  cases.

First from \eqref{gepm} we will have proven \eqref{mincalmoins} if we prove that
\begin{equation}\label{minobeta} \gepp(A_\a)\ge c \naep\lep,\end{equation}
for some  $c>0$. This inequality holds in the following  two cases.

{\em First case : $D_1^- > \naep/20$.}   This means there is a
significant proportion of balls with negative degrees. For each such
negative ball  we have from \eqref{bipmoins}, and since
$|\nu_\ep|(B) \ge |\nu_\ep(B)|$,
$$ \gepB(B)\ge \gepB(B) - \(\dl - \Lap\)\nuep(B)  \ge   \(\frac18 \lep - C\)2\pi |d_B|.$$
This implies that
$$\gepp(A_\a)\ge \frac{1}{4 m'}\(\frac18 \lep - C\) 2\pi D_1^-,$$
hence \eqref{minobeta} is satisfied when $D_1^- > \naep/20$. \medskip

{\em Second case :  $D_0^+ \le  \naep/10$ and $D_1^- \le \naep/20$.}
Then for each $B\in\Bep\cap\Baep$, Proposition~\ref{lowb} yields
$$\gepB(B)\ge\begin{cases}
 \(\frac18 \lep - C\)|\nuep|(B) - \dl|\nuep(B)| & \text{if $|d_B|>0$}\\
\( \frac18\lep - C\)|\nuep|(B) & \text{if
$|d_B|<0$.}\end{cases}$$ Summing with respect to $B$ we
find, since $B\in \Bep\cap\Baep$ implies $B\subset
B(x_\alpha,r_0)$, that
$$\gepp(A_\a)\ge \frac{1}{4 m'}\( \frac18\lep - C\)\naep   -  \frac{1}{4 m'} D_0^+ \dl,$$
which again yields \eqref{minobeta} when $D_0^+ \le  \naep/10$.

We are left with the third case, when  $D_0^+ \ge \frac{\naep}{10}$
 and $D_1^-\le
\frac{\naep}{20}$. In this case \eqref{minobeta}
 and then \eqref{mincalmoins} do not necessarily hold.
  We need to prove \eqref{mincal} instead, which in view of
   \eqref{gepm} reduces to proving
$$\gepp(A_\a)\ge c \naepp.$$
For this  we really need to use the lower bounds on annuli of the type first introduced in \cite{ss0}.
 We denote
$$\anep = B(x_\alpha,r_1)\setminus \(B(x_\alpha,r_0)\cup\Bep\).$$
For any $t\in \Tae$ we let $B_t = B(x_\alpha,t)$ and
$\gamma_t=\partial B_t$ and recall that $\gamma_t$ does not
intersect $\Bep$. If $t\in \Tae$ then $|u_\ep|\ge 1/2$ on $\gamma_t$
because of \eqref{SE} and the fact that
$\dist(A_\a,{\om_\ep}^c)>\ep$.

It follows  (see for instance \cite{livre} Lemma 4.4, or
\eqref{lbcircle} below) that for some constant $c>0$ we have
\begin{equation}\label{cercle}\int_{\gamma_t}\( \hal |\nab_{A} u|^2
+ \frac{(1-|u|^2)^2}{4\ep^2}\) +\hal
 \int_{B_t} (\curl A)^2 \ge c \frac{|\dtep|^2}{t},\end{equation}
where $\dtep$ is the degree of $u_\ep/|u_\ep|$ on $\gamma_t$.
Integrating \eqref{cercle} with respect to $t\in\Tae$, which has measure less than $1$,  the left-hand
side will be bounded above by $e_\ep(A_\a)$.  In view of the lower bound $\dtep\ge
\(D_0^+ - D_1^-\)$, which is  valid for any $t\in\Tae$,  since $|\Tae|\ge
\ell_0$, and from the assumption on $D_0^+$ and $D_1^-$  we deduce that
$$e_\ep\(A_\a\sm\Bep\)\ge c\(D_0^+ - D_1^-\)^2\ge c\naepp.$$
Then, since $\gepp = \frac1{4m'} e_\ep$ on $(\Bep)^c$ we deduce $\gepp(A_\a)\ge c\naepp$ and \eqref{mincal} is proved.
\end{proof}

\subsection{Proof of Theorem~\ref{th1} and Corollary~\ref{coro1}} \mbox{}

\noindent{\em Item 1).} The estimate \eqref{estjac} was already mentioned after the definition \eqref{nu} of $\nuep$, and the bound $|\nuep|(E)\le C e_\ep(\HE)/\lep$ was proved in Lemma~\ref{lembornepetits}. \medskip

\noindent{\em Item 2).} We define
$$\fa = \sum_{B\in\Bep\cap\Baep} (\fepB - \gepB) + \gepp - \gepm. $$
Then clearly $\fa$ is supported in $A_\a$. Moreover, using the fact (see \eqref{sto}) that
$$\fep - \sum_{B\in\Bep}\fepB = e_\ep\indic_{\Bep^c} - \sum_\a\(\dl - \Lap\)\nua$$
and since $\sum_\a\indic_{A_\a}\le m'$ we easily obtain
\begin{equation}\label{eqfep} \fep - \sum_\a\fa = \sum_\a\(\dl - \Lap\)(\nua)_- + \(e_\ep\indic_{\Bep^c} + \sum_{B\in\Bep}\gepB\)\(1 - \frac1{4m'}\sum_\a\indic_{A_\a}\). \end{equation}
Since $\sum_\a\indic_{A_\a}\le m'$ we find
\begin{equation}\label{mff} \fep - \sum_\a\fa \ge \sum_\a\(\dl - \Lap\)(\nua)_- + \frac34\(e_\ep\indic_{\Bep^c} + \sum_{B\in\Bep}\gepB\)\ge 0. \end{equation}\medskip

\noindent{\em Item 3).} We define $\ga$. In the case $\dist(A_\a, {\om_\ep}^c)\le \ep$   we let $\ga = \gepp$.
Then
$$\int \xi\,d(\fa-\ga) = \sum_{B\in\Bep\cap\Baep}\int\xi\,d (\fepB - \gepB) - \int\xi\,d\gepm.$$
This implies \eqref{casbord}, summing \eqref{appxi} over $B\in \Bep\cap\Baep$ and using \eqref{toujoursmoins}.

In the case $\dist(A_\a, {\om_\ep}^c)>\ep$  we let
$$c_\a =\( \frac{\gepal(A_\a)}{|A_\a|}\)_-. $$
We deduce easily from  \eqref{mincalmoins}, \eqref{mincal} and if
$\beta$ is small enough  that  $c_\a\le C$ and applying
Lemma~\ref{easytransport} in $A_\a$ to $\gepal+c_\a$ we obtain
$\vp_\a$ defined on $A_\a$ and such that $0\le \vp_\a\le \gepp +
c_\a$ and,  for any Lipschitz function $\xi$,
$$\int_{A_\a} \xi\,d\(\gepal -\ga\) \le  C|\nab\xi|_{L^\infty(A_\a)} \gepm(A_\a),\quad\text{where $\ga: = \vp_\a-c_\a$.}$$
Moreover $-C\le -c_\a\le \ga\le\gepp$.

Then
\begin{equation}\label{faga}\begin{split}\int_{A_\a} \xi\,d(\fa-\ga)
&= \int_{A_\a} \xi\,d(\fa-\gepal)+\int_{A_\a} \xi\,d(\gepal-\ga)\\
&= \sum_{B\in\Bep\cap\Baep}\int\xi\,d (\fepB - \gepB) + \int_{A_\a} \xi\,d(\gepal-\ga)\\
&\le C |\nab\xi|_{L^\infty(A_\a)}\( \naep + \gepm(A_\a)\),
\end{split}\end{equation}
where we have used \eqref{appxi} to bound the integral involving $\fepB-\gepB$. Moreover, $\ga(A_\a)= \gepal(A_\a)$.

If \eqref{mincalmoins} holds, then \eqref{cas1} follows immediately from \eqref{faga} when $\pi\beta < c/2$, with $c$ the constant in \eqref{mincalmoins}. If \eqref{mincal} holds  we deduce \eqref{cas2} from \eqref{faga} by noting that $c \naep^2 - C\naep(\log \naep +1) \ge \frac{c}{2}\naep^2 - C'\naep$ if $C'$ is chosen large enough depending on $c,C$.

\medskip

\noindent {\em Item 4), \eqref{prosurj}.}
 We   adapt an argument in \cite{struwe}.

 First, $g_\ep - \sum_\a \ga  = \fep - \sum_\a\fa$ thus from \eqref{mff} and since $\sum_\a\ga\ge -C$ we find
\begin{equation}\label{mgg} g_\ep\ge \frac34\(e_\ep\indic_{\Bep^c} + \sum_{B\in\Bep}\gepB\) - C.\end{equation}
 Then, assuming $U_\a\subset\om_\ep$, denote by $\Brepa$ the set of balls in $\Brep$ which are included in some ball belonging to $\Baep\cap\Bep$, so that $\nua(\Bep) = \nuep(\Baep\cap\Bep) = \nuep(\Brepa)$. Applying  Remark~\ref{rqdurbis} for some $r\in(\sqrt\ep,\rho)$ with $\xi=1$ and  summing \eqref{durbis} over $B\in\Brepa$ we find  $ e_\ep(\Brepa)\ge \lapr\nuep(\Brepa)$ and then
\begin{equation*}\begin{split} e_\ep(\Bep\cap\Baep\sm\Brepa)&\le e_\ep(\Bep\cap\Baep) - \Lap \nua(\Bep) + \(\Lap-\lapr\)\nua(\Bep)\\
& = \sum_{B\in\Bep\cap\Baep} \gepB(B) + \hal\log\frac1r \nua(\Bep),
\end{split}\end{equation*}
where we have used the fact that $\fepB(B) = \gepB(B)$. It follows using \eqref{mgg} that
\begin{equation}\label{comple}e_\ep(\Bep\cap\Baep\sm\Brepa)\le C\(\gp(U_\a) + \naep\log\frac1r +1\).\end{equation}

Then comes the argument in \cite{struwe}: For any integer $k$,
 let $r_k=2^{-k}\rho$, and let $\C_k$ be the intersection of
  $\Bkep\sm\Bkiep$ and  $\Baep$. Then $|\C_k|\le C 2^{-2k}
  \rho^2$, since $\rho 2^{-k}$ bounds the total radius of the balls in $\Bkep\cap\Baep$. Moreover $j_\ep = (iu_\ep,\nab u_\ep-iA_\ep)$ and thus assuming $|u_\ep|\le 1$ we have $|j_\ep|^2\le 2 e_\ep$. Then using H\"older's inequality in $\C_k$ and (\ref{comple})
   we find for $p<2$
\begin{equation*}\begin{split}\int_{\C_k}|j_\ep|^p & \le
 |\C_k|^{1-p/2}\(e_\ep(\C_k)\)^{p/2} \le
 |\C_k|^{1-p/2}  \(e_\ep(\Bep\cap\Baep\sm\Bkiep)\)^{p/2}\\
& \le  C_p 2^{-(2-p)k}  \( e_\ep(\Bep\cap\Baep\sm\Bkiep)\)^{p/2}\\
& \le C_p 2^{-(2-p)k} \(\gp(U_\a) + k\naep\log 2 +1 \)^{p/2}.
\end{split}\end{equation*}
Using \eqref{controlenu} we find
$$ \int_{\C_k}|j_\ep|^p \le C_p 2^{-(2-p)k}\(1+k\log2\)^{p/2} \(\gp(U_\a) +1 \)^{p/2}.$$
Summing these inequalities for $k$ ranging from $0$ to the largest
integer $K$ such that $r_K\ge\sqrt\ep$ --- so that  in particular $r_K\le
2\sqrt\ep$ --- we find
$$\int_{\Bep\cap\Baep\sm\Bdsep}|j_\ep|^p
 \le   C_p \( \gp(U_\a) +  1\)^{p/2} ,$$
where $C_p$ is a constant times  the sum of the convergent series $\sum_k 2^{-(2-p) k} (1+ k\log2 - \log\rho)^{p/2}$.  To this inequality we add
$$\int_{\Baep\cap\Bdsep}|j_\ep|^p\le
C\ep^{1-p/2} e_\ep(U_\a)^{p/2},$$
which follows from H\"older's inequality after estimating as above $|\Bdsep\cap\Baep|$ by $C\ep$. But  since $e_\ep= f_\ep  +
 \hal \lep\nu_\ep$ we may write using \eqref{err}, \eqref{controlenu},
 \begin{equation}\label{c4}
 e_\ep(U_\a)\le C \gp(\widehat U_\a) + C |\nu_\ep|(\widehat U_\a) (1+ \lep)\le C\lep\(\gp(U_\a+B(0,2))+ 1\).
 \end{equation}
Thus
\begin{equation*}\begin{split}\int_{\Baep\cap\Bdsep}|j_\ep|^p&\le
C \ep^{1-\frac p2} \lep^{\frac p2} \(
\gp(U_\a+B(0,2))^{\frac p2} +1\) \\ &\le C\(
\gp(U_\a+B(0,2))^{\frac p2} +1\). \end{split}\end{equation*}
We also add
$$\int_{U_\a\sm\Bep}|j_\ep|^p\le
 C\(\gp(U_\a) + 1\)$$
which follows from \eqref{mgg}.
 Finally we obtain
\begin{equation*}\int_{U_\a}  |j_\ep|^p\le C_p \( \gp(U_\a+B(0,2)) +  1\).
\end{equation*}
Summing with respect to the $\a$'s such that $E\cap U_\a\neq\varnothing$, this proves \eqref{prosurj} and concludes the proof of Theorem~\ref{th1}.

\noindent{\em Proof of Corollary~\ref{coro1}.} Note that
$$\int \xi\,d(f_\ep - g_\ep)    = \sum_\a \int \xi \,d(\fa-\ga).$$
Three types of indices occur.

First we consider indices $\a$ such that  $\dist(A_\a, {\om_\ep}^c)>\ep$ and \eqref{cas1} holds. Since  \begin{equation}\label{gage}\ga \le g_\ep - \sum_{\beta\neq\a} g_\beta \le g_\ep + C,\end{equation}
we deduce  from \eqref{cas1} that if $\naep\ge 1$ and $\ep$ is small enough, $g_\ep(A_\a)\ge c\naep\lep$  and then using \eqref{cas1} again that
\begin{equation}\label{faga1}
\int \xi \,d(\fa-\ga) \le C |\nab\xi|_{L^\infty(A_\a)} \(\naep + \beta\gp(A_\a)\). \end{equation}
If $\naep = 0$ the same inequality holds since from \eqref{cas1} the left-hand side is zero.

Second we consider indices $\a$ such that $\dist(A_\a, {\om_\ep}^c)>\ep$ and \eqref{cas2} holds.
We note that if $C$ is large enough then  $x\log x\le \eta x^2
 + C\log ^2\eta/\eta$ holds  for every $x>0$ and $\eta\le 1$,
 for instance by distinguishing the cases $\eta>\frac{\log x}{x}$
  and  $\eta\le \frac{\log x}{x}$. We use this and  \eqref{gage}, together with \eqref{cas2} to find that if $\naep\ge 1$ then
\begin{equation}\label{faga2}
\int \xi \,d(\fa-\ga) \le C |\nab\xi|_{L^\infty(A_\a)} \(\naep + \eta \gp(A_\a)+ \frac{\log^2\eta}{\eta}\). \end{equation}
Again the inequality is true if $\naep = 0$ since from \eqref{cas2} the left-hand side is zero in this case.

Finally we consider indices $\a$ such that $\dist(A_\a, {\om_\ep}^c)\le \ep$. In this case, noting that from Lemma~\ref{lembornepetits} we have $\naep\lep\le C e_\ep(A_\a)$, we rewrite \eqref{casbord}  as
\begin{equation}\label{faga3}
\int \xi \,d(\fa-\ga) \le C \(|\nab\xi|_{L^\infty(A_\a)} \naep + \beta |\xi|_{L^\infty(A_\a)} e_\ep(A_\a)\). \end{equation}

To conclude we sum either \eqref{faga1}, \eqref{faga2} or \eqref{faga3} according to the type of index $\a$, noting  that since $\diam(A_\a)\le 1$, we have $|f|_{L^\infty(A_\a)}\le \widehat f$ on $A_\a$ for any function $f$. Since the overlap number of the $A_\a$'s is bounded by a universal constant, we deduce \eqref{err}.

We prove  \eqref{controlenu}. We  start by proving that when $\dist(A_\a, {\om_\ep}^c)>\ep$ we have
\begin{equation}
\label{340b} \min \( \naepp, \naep \lep\) \le C\(\gp(A_\a)+1\).
\end{equation}
If $\naep = 0$ this is trivial, if not then it follows from either \eqref{cas1} or \eqref{cas2} using \eqref{gage}.

Assume $\a$ is such that $\dist(A_\a, {\om_\ep}^c)>\ep$, then since $2x\le \eta  x^2 + 1/\eta$ and since $x\le \eta x \lep$ is trivially true if $1/\lep<\eta$, we deduce from \eqref{340b} that
\begin{equation}\label{nbin}\naep\le C\(\eta \gp(A_\a) + 1/\eta\).\end{equation}
On the other hand  Lemma~\ref{lembornepetits} implies that  for any $\a$
\begin{equation}\label{nbout}\naep\le C\frac{e_\ep(A_\a\cap\om_\ep)}{\lep}.\end{equation}
Summing \eqref{nbin} or \eqref{nbout} according to whether
$\dist(A_\a, {\om_\ep}^c)$ is $>\ep$ or $\le \ep$ we deduce  \eqref{controlenu}.

\section{Proof of Theorem~\ref{th2}}
\subsection{Convergence} We study the consequences of the
hypothesis
\begin{equation}\label{limlims}\forall R>0,\quad \GR := \limsup_{\ep\to 0}  \int_{\UR}g_\ep(x)\,dx<+\infty.\end{equation} and  prove that it implies the convergence of the vorticities and currents in the appropriate sense.

Note that we assume $\dist (0 , \p \om_\ep) \to + \infty$ so that for every $R$, $\mathbf{U}_R \subset \om_\ep$ for $\ep$ small enough.
From \eqref{hypsets} there exists $C>0$ such that for any $R$ large enough
$$ B_{R/C}\subset \UR\subset B_{CR},\quad \frac1C\le \frac{|\UR|}{R^2}\le C.$$
We now gather several easy  consequences of Theorem~\ref{th1} and (\ref{limlims}).

\begin{pro}
Assume \eqref{limlims} holds, and let $g_\ep$ be as in Theorem~\ref{th1}. Then for any $R$ and  $\ep$ small enough depending on $R$ we have
\begin{equation}\label{calbound} \sum_{\alpha\mid A_\alpha\subset \UR} \min(\naepp, \naep \lep)\le C (\GRC +R^2),\end{equation}
\begin{equation}\label{morebounds1}|\nuep|(\UR) \le C (\GRC +R^2),\end{equation}
\begin{equation}\label{morebounds2}\int(\fep-\gep)\chi_{\UR} \le  C  \sum_{\alpha\mid A_\alpha\subset \URC\sm \URc}
 \naep(\log\naep +1)  \le C (\GRC +R^2), \end{equation}
where $\{\chi_{\UR}\}_R$ are any functions satisfying
\eqref{defchi}.

For any $1\le p<2$ there exists $C_p>0$ such that  for any $R>0$, and $\ep$ small enough
\begin{equation}\label{jbound}\int_{\UR}|j_\ep|^p  \le C_p
(\GRC+ R^2).\end{equation} Moreover, up to extraction of a
subsequence, $\{j_\ep\}_\ep$ converges weakly in $L^p_{loc}(\mr^2)$,
$p<2$ to some $j : \mr^2 \to \mr^2$;
  $\{\nuep\}_\ep$  converges in the weak sense of measures to a measure
  $\nu$ on $\mr^2$ of the form $2\pi \sum_{p \in \Lambda} d_p
  \delta_p$ where $\Lambda$ is a discrete set and $d_p \in \mz$, $\{\mu_\ep\}_\ep$  converges to the same $\nu$ in $W^{-1,p}_\loc(\mr^2)$ for any $p<2$    and $\{h_\ep\}$ converges   weakly in $L^2_{loc}(\mr^2)$ to
   $h$. Moreover it holds that
\begin{equation}\label{curlj} \curl j = \nu -h.\end{equation}
\end{pro}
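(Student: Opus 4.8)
The plan is to prove this proposition as a sequence of relatively direct consequences of Theorem~\ref{th1}, combined with standard compactness arguments. First I would establish \eqref{calbound}: for each $\alpha$ with $A_\alpha \subset \UR$, apply \eqref{cas1} or \eqref{cas2} to bound $\min(\naepp, \naep\lep)$ by $C(\ga(A_\a) + 1)$, using in the second case the elementary inequality to absorb $\naep \log \naep$ into $\naep^2$. Since the $A_\alpha$ have bounded overlap and each $\ga \ge -C$, summing over $\alpha$ with $A_\alpha \subset \UR$ and bounding $\sum_\alpha \ga(A_\alpha)$ by $g_\ep(\URC) + CR^2$ (using \eqref{hypsets} to control how far the $A_\alpha$ reach, and $|\UR| \le CR^2$) gives \eqref{calbound} after passing to $\limsup$ via \eqref{limlims}. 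Then \eqref{morebounds1} follows since $|\nuep|(\UR) = 2\pi \sum_{A_\alpha \cap \UR \ne \varnothing} \naep \le C\sum \min(\naepp,\naep\lep) + C R^2$ (handling the $\naep = 0$ or $\naep$ small terms trivially and using $x \le x^2$ for $x \ge 1$), and \eqref{morebounds2} follows from \eqref{eqfep}, which shows $f_\ep - g_\ep = \sum_\alpha (\dl - \Lap)(\nua)_- + (\text{positive})(1 - \tfrac{1}{4m'}\sum \indic_{A_\a})$, so that $\int (\fep - \gep)\chi_{\UR}$ is controlled by $\sum_{A_\alpha \subset \URC\sm\URc} \naep(\log \naep + 1)$, again bounded via \eqref{calbound}.

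Next I would obtain \eqref{jbound} directly from \eqref{prosurj} in Theorem~\ref{th1}: with $E = \UR$, we get $\int_{\UR}|j_\ep|^p \le C_p(\gp(\UR + B(0,C)) + |\UR|)$, and $\gp(\UR + B(0,C)) \le \gp(\URC) \le g_\ep(\URC) + C(g_\ep)_-(\URC)$; the negative part is controlled by \eqref{gmoins} and hence by $e_\ep(\widehat{\URC})/\lep$, which by \eqref{c4}-type reasoning (or directly $e_\ep \le \ep^{-\beta}$ locally is too crude — better to use $g_\ep(\URC) + C|\nuep|$-bounds from Remark~\ref{rree1}) is $\le C(\GRC + R^2)$. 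So $\int_{\UR}|j_\ep|^p \le C_p(\GRC + R^2)$. The uniform local $L^p$ bound for $p < 2$ then gives, by a diagonal extraction over an exhaustion of $\mr^2$ by the $\UR$, a subsequence with $j_\ep \rightharpoonup j$ weakly in $L^p_\loc$. Similarly $|\nuep|(\UR) \le C(\GRC + R^2)$ gives local uniform mass bounds, so up to further extraction $\nuep \rightharpoonup \nu$ weakly-$*$ as measures; that $\nu = 2\pi \sum_{p\in\Lambda} d_p \delta_p$ with $\Lambda$ discrete and $d_p \in \mz$ is a standard consequence of the structure \eqref{nu} of $\nuep$ together with the fact that the balls in $\Bsep$ have total radius $\to 0$ on compacts (so masses cannot accumulate), using the lower bound $e_\ep(B) \ge (\tfrac18\lep - C)|\nuep|(B)$ from \eqref{prolb1} to prevent infinitely many vortices in a compact region. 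The field bound $\int_{\UR} h_\ep^2 \le C$ follows from Remark~\ref{rem3.2}/\eqref{rem51}, giving $\tfrac14 h_\ep^2 \le g_\ep + C$, so $h_\ep \rightharpoonup h$ weakly in $L^2_\loc$ up to extraction.

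Finally, for \eqref{estjac} in Theorem~\ref{th1} gives $\|\mu_\ep - \nuep\|_{(C^{0,1}_0(\om_\ep))^*} \le C\sqrt\ep G_\ep(u_\ep,A_\ep) \le C\sqrt\ep\,\ep^{-\beta} \to 0$ since $\beta < 1/2$, so $\mu_\ep$ and $\nuep$ have the same limit $\nu$; the convergence $\mu_\ep \to \nu$ in $W^{-1,p}_\loc$ for $p < 2$ follows by interpolating this dual-Lipschitz bound with the uniform $L^p_\loc$ bound on $\mu_\ep = \curl j_\ep + h_\ep$ (itself bounded in $W^{-1,p}$ by $\|j_\ep\|_{L^p} + \|h_\ep\|_{L^2}$ on compacts). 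The identity \eqref{curlj}, $\curl j = \nu - h$, is then obtained by passing to the limit in $\mu_\ep = \curl j_\ep + h_\ep$: $\curl j_\ep \to \curl j$ in $\mathcal{D}'$ by weak $L^p$ convergence of $j_\ep$, $h_\ep \to h$ weakly, and $\mu_\ep \to \nu$, so the distributional identity passes to the limit. I expect the main obstacle to be the discreteness of $\Lambda$ and the integrality of the limiting degrees: one must rule out vortex masses escaping to infinitely fine scales or accumulating, which requires carefully combining the lower bound \eqref{prolb1} (each vortex costs at least $\sim \tfrac18 \lep$ of energy, but $g_\ep$ is bounded, not $e_\ep$) with the transport estimate \eqref{gmoins} relating $(g_\ep)_-$ and $\gp$ to $e_\ep(\HE)/\lep$ and $|\nuep|$ — the bookkeeping to ensure the limiting measure is genuinely a locally finite sum of integer Dirac masses, rather than merely a measure, is where the argument needs the most care.
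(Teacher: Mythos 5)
Your overall strategy matches the paper's: \eqref{calbound}, \eqref{morebounds1}, \eqref{jbound} do follow from \eqref{340b}, \eqref{controlenu}, and \eqref{prosurj} respectively (essentially your route, modulo which particular inequality from Theorem~\ref{th1} you invoke), and the compactness and limit identification are the standard ones. But there are two places where your argument has genuine gaps.

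First, the derivation of \eqref{morebounds2} is wrong as written, and also misses the key point. You cite \eqref{eqfep} as showing a decomposition of $f_\ep - g_\ep$, but \eqref{eqfep} is a formula for $f_\ep - \sum_\a \fa$, not for $f_\ep - g_\ep$. Since $g_\ep = f_\ep + \sum_\a(\ga - \fa)$, the correct identity is $f_\ep - g_\ep = \sum_\a(\fa - \ga)$, and one then estimates each $\int \chi_{\UR}\,d(\fa-\ga)$ via \eqref{cas1} or \eqref{cas2}. Here is the step you skip entirely: under the hypothesis \eqref{limlims} one must argue that for $\ep$ small, for every $\a$ with $A_\a\subset\UR$, it is \eqref{cas2} (and not \eqref{cas1}) that holds --- otherwise you only get the bound $C\naep(1+\beta\lep)$ from \eqref{cas1}, which is not $O(\naep(\log\naep+1))$ and would not give \eqref{morebounds2}. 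The paper's argument is: if \eqref{cas1} held with $\naep\ge 1$, then $\ga(A_\a)\ge c\naep\lep$ combined with $\ga\le g_\ep + C$ would force $g_\ep(A_\a)\ge c\lep - C$, contradicting \eqref{limlims} as $\ep\to 0$. Without this dichotomy argument, \eqref{morebounds2} does not follow.

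Second, the convergence of $\mu_\ep$ to $\nu$ in $W^{-1,p}_\loc$ is only sketched via an appeal to ``interpolation''. You correctly observe that \eqref{estjac} gives $\|\mu_\ep-\nuep\|_{(C^{0,1}_0(\om_\ep))^*}\to 0$ (since $\beta<1/2$), so the limits of $\mu_\ep$ and $\nuep$ coincide as distributions. But $W^{-1,p}_\loc$ convergence is strictly stronger, and the paper obtains it differently: it invokes the localized Jacobian estimate in the form $\|\mu_\ep - \nuep\|_{(C_0^{0,\gamma}(B_R))^*}\le C(\sqrt\ep)^\gamma(e_\ep(B_R)+1)$, then bounds $e_\ep(B_R)\le C\lep$ using \eqref{c4}-type estimates so the right-hand side tends to $0$, and finally uses that $\{\nuep\}$ is bounded as measures on $B_R$ together with the compact embedding $(C^{0,\gamma}_0(B_R))^*\hookrightarrow W^{-1,p}_0(B_R)$. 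Your ``$\mu_\ep$ bounded in $W^{-1,p}$ plus convergence in $(C^{0,1})^*$ implies convergence in $W^{-1,p}_\loc$'' is not an interpolation inequality --- boundedness in $W^{-1,p}$ gives only weak convergence up to a subsequence, and you would still need a compactness argument to upgrade the distributional identification of the limit to strong local convergence; you should spell out the compact embedding that does the work.

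Your worry at the end about the discreteness of $\Lambda$ and integrality of degrees is handled more cheaply than you suggest: from \eqref{morebounds1}, $|\nuep|(\UR)$ is bounded uniformly in $\ep$, and $\nuep$ is by construction $2\pi$ times an integer combination of Dirac masses; a bounded sequence of such measures on a compact set converges weakly-$*$ (up to extraction) to a measure of the same form, since the number of atoms is uniformly bounded. No appeal to \eqref{prolb1} is needed at this stage.
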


\begin{proof}
Assertions \eqref{calbound}, \eqref{morebounds1} and \eqref{jbound}  are  direct consequences of \eqref{340b},   \eqref{controlenu} and \eqref{prosurj}, respectively.

We prove  \eqref{morebounds2}. First we note that a consequence of  \eqref{limlims} is that for every $R>0$,  if $\ep>0$ is small enough and $A_\a\subset \UR$  then \eqref{cas2} holds. Indeed if \eqref{cas1} is true with $\naep \ge 1$ (note that if $\naep = 0$ then \eqref{cas1} and \eqref{cas2} are identical) then $g_\ep(A_\a)\ge c\lep-C$, using \eqref{gage}, which contradicts \eqref{limlims} if $\ep$ is small enough.

Then we use \eqref{cas1} with $\xi =  \chi_{\UR}$. From the fact that   $\chi_\UR$ is supported in $\URC$ and since $\dist(\URC, \p \om_\ep) \to + \infty$ we have, if $\ep$ is small enough and $A_\a\cap\URC\neq\varnothing$, that $\dist(A_\a, \p \om_\ep)>\ep$. Then summing \eqref{cas1} over all such $\a$ we find
$$ \int \chi_\UR\,d(f_\ep - g_\ep)
  \le C  \!\!\!\!\!\!
  \sum_{\substack{\text{$\alpha$ s.t.}\\ A_\a\subset \URC\sm\URc}}
  \!\!\!\!\!\!   \naep \(\log\naep + 1\),$$
which is  the first inequality in \eqref{morebounds2}. The second one then easily follows from \eqref{340b}.

We now turn  to the convergence  results.  The weak local  convergence  of $j_\ep$  follows from  a bound
for $\int_{\UR} |j_\ep|^p$ valid for any $\ep$ small enough, depending
on $R$, which is implied by (\ref{limlims}) and (\ref{jbound}).
 From
(\ref{morebounds1}), $\{\nu_\ep\}_\ep$ is bounded on any compact
subset of $\mr^2$, hence converges (up to extraction) to a measure
$\nu$, which  by (\ref{nu}) has to be of the form $2\pi \sum_{p \in
\Lambda} d_p \delta_{p}$ where $\Lambda $ is a discrete set and $d_p\in\mz$ for every $p\in\Lambda$ (we will
prove below that $d_p=1$).

The weak local convergence of $h_\ep$ follows from Remark
\ref{rem51} combined with the bound (\ref{limlims}).

The convergence of $\{\mu_\ep\}_\ep$ in $W^{-1,p}_\loc$  uses
    the Jacobian estimate (see \cite{js} or \cite{livre}, Theorem 6.2)
from which we deduce that for any $R>0$ and any $\gamma\in(0,1)$, and since $r(\Bsep\cap B_R)\le C \sqrt\ep$,
\begin{equation}\label{jacest}\|\mu_\ep - \nuep\|_{(C_0^{0,\gamma}(B_R))^*}
\le C(\sqrt\ep)^\gamma\(e_\ep(B_R)+1\),\end{equation}
where $C$ depends on $R$ but not on $\ep$.

But $\{\nuep\}_\ep $ is bounded in $B_R$ as measures,
 hence in $(C_0^{0,\gamma})^*$, and arguing again as in \eqref{c4},
$$e_\ep(B_R)\le  \gp(B_{R+1}) + \dl|\nuep|(B_{R+C})\le C\lep$$
therefore the right-hand side in \eqref{jacest} tends to $0$ as
$\ep\to 0$ and $\{\mu_\ep\}_\ep$ is bounded in
$(C_0^{0,\gamma}(B_R))^*$. We deduce that $\mu_\ep\to\nu$  in
$W^{-1,p}_\loc$ by noting that for any $1<p<2$ there exists
$0<\gamma<1$ such that $W^{1,p'}_0(B_R)\hookrightarrow
C^{0,\gamma}_0$ with compact imbedding --- where $1/p + 1/p' = 1$
--- which implies by duality that  $(C^{0,\gamma}_0)^*
\hookrightarrow W^{-1,p}_0$ with compact imbedding.

Finally  \eqref{curlj} is obtained by passing to the limit in
$\mu_\ep = \curl j_\ep + \curl A_\ep$
 since  by Remark \ref{rem3.2} we may assume  (up to extraction) that  $\curl
A_\ep\to h$ weakly locally in $L^2$ as $\ep \to 0$.
\end{proof}

\begin{remark} From the above results, it is easy to deduce
 \eqref{lpunif} under the stronger assumption \eqref{hypcoerplus}.
  In this case we have  $\GR\le CR^2$ and therefore
  \eqref{morebounds1}, \eqref{jbound} and  Remark \eqref{rem3.2}
   imply that
\begin{equation}\label{uniformes}|\nuep|(\UR)\le CR^2,\quad \int_{\UR}|j_\ep|^p\le CR^2 , \quad \int_{\UR} |h_\ep|^2 \le C R^2 \end{equation}
which in turn implies \eqref{lpunif}.
\end{remark}

\subsection{Lower bound by the  renormalized energy}

We turn to the proof of the remaining statement in Theorem~\ref{th2},
namely that $\nu$ is of the form $2\pi\sum_{p\in\Lambda}\delta_p$
(we already know it is of the form $2\pi\sum_{p\in\Lambda}d_p
\delta_p$, where the $d_p$'s are nonzero integers) and that under
assumption \eqref{hypcoerplus} the lower bound \eqref{limlim} holds.
Both are related to a lower bound of $\int \chi_R \gep$ by the
renormalized energy, where $\chi_R:=\chi_{\UR}$. This reproduces
more or less  arguments present  in \cite{bbh} and \cite{betriv}.
Throughout this subsection we assume that \eqref{hypcoerplus} holds,
and begin by bounding from below the integral of
$(e_\ep-\dl\nuep)\chi_R$.

Choose $R>0$. From \eqref{morebounds1} we have
 that $|\nuep|$ is bounded independently of $\ep$
 on the support of $\chi_R$, thus a subsequence of
 $\{|\nuep|\indic_{\supp\chi_R}\}_\ep$ converges to a positive
 measure $\tilde\nu$ of the form $2\pi \sum_{i=1}^k k_i \delta_{a_i}$,
 where $k_i$ is a positive integer for every $i$
 (the $a_i$'s are a   subset of $\Lambda$).

  From the weak convergence of $j_\ep$ to $j$ in $L^p_\loc$ and using  the inequality
  $|\nab_{A_\ep}u_\ep|\ge |j_\ep|$ (following from the assumption $|u_\ep|\le 1$)
  we have  for any $r>0$
\begin{equation}\label{ext}\liminf_{\ep\to 0}
\int_{\mr^2\sm\cup_{p\in\Lambda}
B(p,r)}  \chi_R |\nab_{A_\ep}
u_\ep|^2\ge\liminf_{\ep\to 0}\int_{\mr^2\sm\cup_{p\in\Lambda}
B(p,r)} \chi_R |j_\ep|^2 \ge\int_{\mr^2\sm\cup_{p\in\Lambda} B(p,r)}
\chi_R  |j|^2.\end{equation}
Indeed either the left-hand side is equal to $+\infty$ and the statement is true, or there is weak $L^2$ convergence of the currents on the complement of $\cup_p B(p,r)$ and \eqref{ext} follows by weak lower semicontinuity of the integrand.  Similarly, by weak
convergence of $h_\ep $ to $h$ we have
\begin{equation}\label{ext2}
\liminf_{\ep\to 0} \int_{\mr^2\sm\cup_{p\in\Lambda}
B(p,r)}  \chi_R
 {h_\ep}^2\ge\int_{\mr^2\sm\cup_{p\in\Lambda}
B(p,r)}
\chi_R h^2.\end{equation}

 Then consider  any $\eta\in(0,1)$   small enough
  so that the balls $B(a_i,2\eta)$ are disjoint. Note
 that since the limit of $|\nuep|$ on the support of $\chi_R$
 is a sum of Dirac masses concentrated at the points $\{a_i\}_i$   we have for $\ep$ small enough
$$|\nuep|(\supp\chi_R\sm\cup_i B(a_i,\eta)) = 0,\quad \nuep(B(a_i,\eta)) = 2\pi d_i,$$
where $2\pi d_i = \nu(a_i)$.

We use two distinct lower bounds for the integral of $\chi_R (e_\ep
- \dl\nuep)$ on balls. We distinguish the  set $I$ of indices
such that $B(a_i,2\eta)\subset \{\chi_R = 1\}$ and the remaining
indices  $J$. Note that if $i\in J$ then $B(a_i,2\eta)$ intersects
the set where $\chi_R\neq 1$ and the support of $\chi_R$, thus
$B(a_i,2\eta)\subset \URC\sm \URc$ for some $C>0$ independent of
$R>0$, $\eta\in (0,1)$ and $i$.

In the case $i\in I$ we use
\begin{equation}\label{uneboule}\int_{B(a_i,\eta)} e_\ep \ge \pi| d_i|\log \frac{\eta}\ep + C_{|d_i|} + o_{\eta,\ep}(1),\end{equation}
 where $C_{d}$ is a constant depending only on $d$ such that $C_{ 1} = \gamma$,
(where $\gamma$ is defined after Theorem~1), where  $C_0 = 0$, and where
$$\lim_{\eta\to 0}\limsup_{\ep\to 0}o_{\eta,\ep}(1)=0.$$
We postpone the proof of this well-known statement. It is very
similar to analogous ones found in \cite{bbh} or \cite{betriv}. Then
we deduce from \eqref{uneboule}  that
for any $i\in I$ and letting  $C_{d_i} = +\infty$ if $d_i < 0$,
\begin{equation}\label{unebonneboule}\liminf_{\ep\to 0}\int_{B(a_i,\eta)} \(e_\ep - \dl\,d\nuep\) \ge \pi d_i \log \eta\  + C_{d_i}  + o_\eta(1),\end{equation}
where $\lim_{\eta\to 0} o_\eta(1) = 0$.

In the case $i\in J$ we have to introduce the weight $\chi_R$ which
is no longer constant on the ball. Then we resort to Remark~\ref{rqdurbis}.
Consider the family of balls $\Cep$ consisting of the balls $B$ in
$\Betaep$ which intersect the support of $\chi_{R+1}$, and such that
$|\nuep|(B)\neq \varnothing$. For any $B\in\C_\ep$, since $|\nuep|(B)\neq 0$ and $|\nuep|\to 2\pi\sum_i k_i\delta_{a_i}$, and since $r(B)\le\eta/2$, we have for $\ep$ small enough depending on $R$ that there is some index $i$ for which $B\subset B(a_i,\eta)$.
Let $\Cepi$ denote the balls included in $B(a_i,\eta)$ and partition $\Cepi$ as $\cup_\alpha\Cepia$ and $\Cep$ as $\cup_\alpha \Cepa$ where the superscript $\alpha$ corresponds to the balls which are included in a ball   $B\in\Baep$ (we assume $\eta/2 <\rho$).

From \eqref{durbis}, for every $B\in\Cepia$
\begin{equation}\label{413}
\int_B \chi_R\(e_\ep - \Lambda_\ep^{\a,\eta/2}\,d \nuep\) \ge
- C |\nab\chi_R|_\infty |\nuep|(B) \ge -  C|\nuep|(B).\end{equation}
Now we note that since \eqref{hypcoerplus} holds, then  for $\ep$ small enough $\cba = M\naep$, for otherwise we would have $e_\ep(\Baep)\ge \frac{M}{3}\naep\lep$ and then that
$$\sum_{B\in\Baep} \gepB(B) =  \sum_{B\in\Baep} \fepB(B)  = \sum_{B \in \Baep} (e_\ep- \Lambda_\ep^\a  \nu_\ep) (B) \ge  (\frac{M}{3}-\pi)\naep\lep\quad\xrightarrow{\ep\to 0} \quad+\infty,$$
if we choose $M> 3 \pi$ and since $\naep\ge 1$. This is a contradiction with \eqref{hypcoer} since $g_\ep \ge \sum_B \gepB - C$ by \eqref{minmajgh}, proving that $\cba = M\naep$.

Then we have from \eqref{lamba} that
$$ \Lambda_\ep^{\a,\eta/2} - \dl = \hal\log\eta +\Delta,\quad\text{where}\quad |\Delta| \le C\(\log\naep+1\)$$
and
$$ \left|\int_B \(\chi_R - \chi_R(a_i)\)\,d\nuep\right| \le C\eta |\nuep|(B).$$
Hence with \eqref{413}
\begin{equation*}\begin{split}
\int_{B} \chi_R\(e_\ep -\dl\,d\nuep\) & = \int_B \chi_R\(e_\ep -
\Lambda_\ep^{\a,\eta/2}\,d\nuep\) +\(
\hal \log\eta + \Delta \)  \int_B \chi_R\,d\nuep\\
 &\ge - C  |\nuep|(B)
 + \frac{\log\eta}2 \chi_R(a_i)\nuep(B) -
 \frac\eta 2 \log\eta |\nuep|(B) - |\Delta ||\nu_\ep|(B)\\
 &\ge \frac{\log\eta}2 \nuep(B)\chi_R(a_i) - C|\nuep|(B) \(1+\log\naep\).
 \end{split}\end{equation*}
 Summing over $B\in\Cepia$ and then over $\a$ and  $i\in J$
 we find, since
 $$\sum_{B\in\Cepi} \nuep(B) = \nuep(B(a_i,\eta))\to \nu(B(a_i,\eta)) = 2\pi d_i,$$
 that,
 $$\liminf_{\ep\to 0} \int_{\cup_{i\in J} B(a_i,\eta)} \chi_R\(e_\ep - \dl\,d\nuep\)\ge \pi\sum_{i\in J} d_i\chi_R(a_i)\log \eta - C\Delta(R),$$
 where
 $$\Delta (R) = \limsup_{\ep\to 0}     \sum_{U_\a\subset \URC\sm\URc} \naep\(\log\naep +1\).$$

Summing \eqref{unebonneboule} over $i\in I$
 and adding  the above and \eqref{ext}--(\ref{ext2}), we deduce
\begin{multline}\label{presqueren}\liminf_{\ep\to 0}
\int\chi_R (e_\ep - \dl\,d\nuep)\ge \hal
\int_{\mr^2\sm\cup_{p\in\Lambda}
 B(p,\eta)}  \chi_R ( |j|^2 + h^2)  \\ +\sum_{i\in I }\chi_R(a_i)
 \(\pi
  d_i \log \eta + C_{d_i}\)
+ \sum_{i \in J} \chi_R (a_i)  \pi d_i \log \eta
   - C\Delta(R) - o_\eta(1).\end{multline}

We will now take the limit $\eta\to 0$ on the right-hand side.
For that we use a Hodge decomposition of $j$ in $B(a_i,\eta_0)$, writing $j = -\np H + \nabla K$, with $H = 0$ on $\p B(a_i,\eta_0)$.  Then since  $-\Delta H = \nu - h = 2\pi d_i\delta_{a_i} - 1$ we have  $H(x) =
d_i \log|x-a_i| + F$, where $F$ is in $H^2$  in the neighbourhood of
$a_i$, in particular $H\in W^{1,p}$ for any $p<2$, and since $j\in L^p$, this implies that $K\in W^{1,p}$ also. Then  an easy computation shows that
$$\lim_{\eta\to 0}\hal \int_{B(a_i,\eta_0)\sm B(a_i,\eta)}  \chi_R |\np H|^2 +\pi(\log \eta) {d_i}^2\chi_R(a_i)$$
exists and is finite, while
$$ \int_{B(a_i,\eta_0)\sm B(a_i,\eta)}  \chi_R |j|^2\ge \int_{B(a_i,\eta_0)\sm B(a_i,\eta)} \chi_R\(  |\np H|^2 + \np H\cdot\nab K\).$$
Decomposing $H$ and integrating by parts  we have, writing $C_{i,\eta} =  B(a_i,\eta_0)\sm B(a_i,\eta)$,
$$\int_{C_{i,\eta}}  \np H\cdot(\chi_R \nab K) = \int_{C_{i,\eta}} \np F\cdot(\chi_R\nab K) - d_i \int_{C_{i,\eta}} K\np \log  \cdot \nab \chi_R,$$
and this remains bounded as $\eta\to 0$, using the regularity of $\chi_R$, $F$, and the boundedness of $H$, $K$, $\log$ in $W^{1,p}$.
We may then  deduce that
$$\liminf_{\eta\to 0}\hal \int_{B(a_i,\eta_0)\sm B(a_i,\eta)}  \chi_R |j|^2 +\pi(\log \eta) {d_i}^2\chi_R(p)$$
is not equal to $-\infty$.

As a  consequence, writing $d_i = {d_i}^2 - ({d_i}^2 - d_i)$ in the right-hand side of \eqref{presqueren}, and this right-hand side being
bounded above  independently of $\eta$,  we have that $\sum_i ({d_i}^2 - d_i) \chi_R(a_i) \log\frac1\eta$ is bounded above as $\eta\to 0$. Thus  we have $d_i\in\{0,1\}$ for any $i$ such that $\chi_R(a_i)\neq 0$
 and then $d_i = 1$ since $d_i$ was assumed to be nonzero.
In view of this, (\ref{presqueren}) can be rewritten as
\begin{multline*} \liminf_{\ep\to 0} \int\chi_R (e_\ep -
\dl\,d\nuep)\ge \hal \int_{\mr^2\sm\cup_{p\in\Lambda}
 B(p,\eta)}  \chi_R (|j|^2 +h^2 )  \\ +\sum_{p \in \Lambda  }\chi_R(p)
 \(\pi
  \log \eta + \gamma\)
 - C\Delta(R) - o_\eta(1),\end{multline*}
 where we recall that $\gamma = C_1$ and we have absorbed $C_1\sum_{i\in J} \chi_R(a_i)$ in $C\Delta(R)$.

Letting $\eta\to 0$ we thus find (see \eqref{WR})
\begin{equation*}\liminf_{\ep\to 0}\int\chi_R
(e_\ep - \dl\,d\nuep)\ge W(j,\chi_R)+ \hal \int \chi_R h^2+
\sum_{p\in\Lambda}\chi_R(p)
 \gamma - C\Delta(R).\end{equation*}
 From \eqref{morebounds2} we may replace $e_\ep-\dl\nuep$ by $g_\ep$, with an error term which may be absorbed in $C\Delta(R)$ hence
\begin{equation}  \label{quasiren}\liminf_{\ep\to 0}
\int\chi_R \,d\gep \ge W(j,\chi_R)+\hal \int \chi_R h^2+
 \sum_{p\in\Lambda}\chi_R(p)  \gamma - C\Delta(R).\end{equation} Now, under hypothesis \eqref{hypcoerplus}
and using \eqref{calbound}, we have
$$\limsup_{\ep\to 0} \sum_{\alpha\mid A_\alpha\subset \UR} \naepp\le CR^2$$
and thus
$$\limsup_{R \to \infty}\limsup_{\ep \to 0}  \frac{1}{R^2}
\sum_{\alpha \mid A_\alpha \subset \URC\sm\URc} n_\ep^\alpha |\log
n_\ep^\alpha| =0.$$
Indeed, using H\"older's inequality,
and bounding the number of $\alpha$'s involved in the above sum by
$CR$, we find
$$\sum_{\alpha \mid A_\alpha \subset \URC\sm\URc)} [ n_\ep^\alpha]^{3/2}
\le (CR)^{1/4} \( \sum_{\alpha\mid A_\alpha\subset \URC}
\naepp\)^{3/4}\le C R^{1/4+3/2}.$$
It follows, since $U_\alpha
\subset A_\alpha$,  that
\begin{equation}\label{ren1} \limsup_{R\to+\infty}\frac{\Delta(R)}{R^2} = 0\end{equation}
and in particular $\nu(\URC\sm\URc) = o(R^2)$.
Then we write, using $\nu = \curl j + h$,
$$\sum_{p\in\Lambda} \chi_R(p) =
   \frac{1}{2\pi}
    \int \chi_R\,d\nu = \frac{1}{2\pi}
    \int \chi_R h - \frac{1}{2\pi} \int \np \chi_R\cdot j.$$
Let $E_R = \{0<\chi_R<1\}$. Then since $E_R\subset\URC\sm\URc$ we have $|E_R|\le CR$ and using \eqref{uniformes}  together with H�lder's inequality we find
$$ \int_{E_R} \chi_R h \le |E_R|^\hal \(\int_{E_R} h^2\)^\hal\le C R^{3/2},$$
and a similar bound for $\int \np \chi_R\cdot j$ using
\eqref{uniformes} again, since it is equal to $\int_{E_R} \np
\chi_R\cdot j$. Therefore
$$ \sum_{p\in\Lambda} \chi_R(p) = \frac{1}{2\pi}
\int_{\{\chi_R = 1\}} h + o(R^2) = \frac{1}{2\pi} \int_\UR h +
o(R^2),$$ the second equality being proved again with the help of
\eqref{uniformes} and H�lder's inequality. Together with
\eqref{ren1} and \eqref{quasiren}, this proves  \eqref{limlim}.

There remains to prove \eqref{uneboule}.  For this it is convenient to blow-up $B(a_i,\eta)$ to the unit ball $B_1$. Then \eqref{uneboule} becomes
\begin{equation}\label{bbhup}\hal\int_{B_1}\(|\nab_B v|^2 + \left|\frac{\curl B}{\eta}\right|^2 +\frac{(1-|v|^2)^2}{2{\ep'}^2}\)\ge
\pi |d_i|\log \frac1{\ep'} + C_{d_i} +
o_{\eta,\ep}(1),\end{equation} where $v(x) = u_\ep(\eta x)$,
$B(x) = \eta A_\ep(\eta x)$ and $\eta \ep' = \ep$, so   that  $\ep'$ tends to $0$ with
$\ep$. Note that $(v,B)$ depends on $\ep$ but we omit this in the
notation for the rest of the proof.

Since $\curl A_\ep\to h$ weakly  in $L^2_\loc$, it follows
 that $\|\curl B\|_{L^2(B_1)} \le 2 \eta  \|\curl A_\ep\|_{L^2(B_\eta)}\le C\eta$.  Then, choosing
 to work in the gauge $\div B = 0$, $B\cdot\tau = \text{constant}$ on
 $\partial B_1$, we have $\|B\|_{H^1(B_1)} \le C\eta$.  Since $j(u_\ep,A_\ep)$ is bounded in $L^p_\loc(\mr^2)$ for any $p<2$, we deduce immediately that $\|j(v,B)\|_{L^p(B_1)}\le C \eta^{1-2/p}$.
But by Sobolev embedding,  $\|B\|_{L^q(B_1)}=O(\eta)$ for any $q>1$ hence the integral of $B\cdot j(v,B)$ on $B_1$ is $o_{\eta}(1)$. Then, since
$$|\nab_B v|^2 = |\nab v|^2 - 2 B\cdot j(v,B) + |B|^2 |v|^2,$$
\eqref{bbhup} will follow if we  show that
\begin{equation}\label{bbhbup}\hal\int_{B_1}\(|\nab v|^2 + \frac{(1-|v|^2)^2}{2{\ep'}^2}\)\ge
\pi |d_i|\log \frac1{\ep'} + C_{d_i} +
o_{\eta,\ep}(1).\end{equation}

To prove \eqref{bbhbup}  we modify $B$ in order for the current to be divergence free: As before  we use the Hodge decomposition $j(v) := (iv,\nab v) = -\np H  +\nab K$ with $H = 0$ on $\p B_1$, and let $\tilde v  = ve^{-iK}$. Then denoting $e(v)$ the integrand in \eqref{bbhbup} we have
$$e(\tilde v) = e(v) - \nab K\cdot j(v) +\frac{ |v|^2}{2}  |\nab K|^2.$$
We replace  $j(v) = -\np H +\nab K$ and note that, integrating by
parts,  $\nab K\cdot\np H$ integrates to $0$ on $B_1$. Therefore
$$\int_{B_1} e(\tilde v) = \int_{B_1}\( e(v) +\(\frac{|v|^2}2 - 1\) |\nab K|^2\) \le
\int_{B_1} e(v) .$$
 Thus if we show the lower bound \eqref{bbhbup}
for $\tilde v$, then we are done. For this we may  assume, without loss of generality, that the upper bound
\begin{equation}\label{bbhsup}\hal\int_{B_1}\(|\nab \tilde v|^2 + \frac{(1-|\tilde v|^2)^2}{2{\ep'}^2}\)\le
\pi |d_i|\log \frac1{\ep'} + C_{d_i}\end{equation}
holds.

The advantage is that now we have
$$j(\tilde v) = - \np H + (1 - |v|^2) \nab K.$$
But  $\lim_{\ep\to 0}(1 - |v|^2) =  0$ in $L^q(B_1)$ for any $q>1$,
 being bounded in $L^\infty$ and tending to $0$ in $L^2$.
 Moreover, we have seen that
    $\|j(v,B)\|_{L^p(B_1)}\le C \eta^{1-2/p}$, and  that
 $B = O(\eta)$ in every $L^p$, so
\begin{equation}\label{courantsproches} j(v,B) - j(v) = |v|^2 B = O(\eta)\end{equation}
and therefore $j(v) = O(\eta^{1-2/p})$ in $L^p$, which implies that
$H$ and $K$ are  $O(\eta^{1-2/p})$ in $W^{1,p}$. It follows from the
above that
\begin{equation}\label{tranche1} j(\tilde v) + \np H = o_{\eta,\ep}(1).\end{equation}
in $L^p(B_1)$, for every $p<2$.

Moreover,  since $\curl j(u_\ep,A_\ep) + h_\ep \to 2\pi
d_i\delta_{a_i}$ in $W^{-1,p}$ as $\ep\to 0$,  we have that $\curl
j(v,B) + \eta \, \curl B \to 2\pi d_i\delta_0$. Hence using
\eqref{courantsproches} we deduce $-\Delta H = \curl j(v) \to 2\pi
d_i\delta_0  + o_\eta(1)$ as $\ep\to 0$ in $W^{-1,p}$. Since $H = 0$
on $\p B_1$ we then have
\begin{equation}\label{tranche2} H(x) = -2\pi d_i\log|x| + o_\eta(1)\end{equation}
in $W^{1,p}$.

From \eqref{tranche1}, \eqref{tranche2} we may find radii $\{r_\ep\}_\ep$ such that
$$\text{i) $\lim_{\ep\to 0} r_\ep = 1$},\quad\text{ii)  $\|j(\tilde v) + \np H\|_{L^p(\p B_{r_\ep})} = o_{\eta,\ep}(1)$},\quad \text{iii) $\|H + 2\pi d_i\log\|_{W^{1,p}(\p B_{r_\ep})} = o_{\eta}(1)$}.$$
We may further require that  $\rho:=|\tilde v|\to 1$ uniformly as $\ep\to 0$ on $\partial
B_{r_\ep}$. Indeed  from \eqref{bbhsup}  we have
$$\hal \int_{B_1} |\nab \rho|^2 +  \frac1{2{\ep'}^2} (1-\rho^2)^2
\le C\log\ep'$$
thus  a mean value argument easily implies that $r_\ep$ may be chosen such that
$$\hal \int_{\p B_{r_\ep}} |\nab \rho|^2 +  \frac1{2{\ep'}^2} (1-\rho^2)^2\le C (\log\ep')^2.$$
This in turn  implies using \eqref{esmodcir} that $\|\rho-1\|_{L^\infty(\p B_{r_\ep})}\to 0$ as $\ep\to 0$.

Then, writing $\tilde v = \rho e^{i\vp}$, we have $j(\tilde v) = \rho^2 \nab\vp$ and the above implies
 that  for some $\theta_0\in\mr$,
$$ \tilde v = (1+\tilde{\rho}) e^{i(\theta_0+ d_i\theta + \tilde{\vp})},
\quad\text{where $ \|\tilde{\vp}\|_{W^{1,p}(\partial B_1)} =
o_{\eta,\ep}(1)$
 and $\|\tilde{\rho}\|_{L^\infty(\partial B_1)} = o_\ep(1)$.}$$
Without going into further detail (see for instance \cite{bbh}, Chapter {\textsc{\romannumeral 8}}),
the above implies that
\begin{multline*}
 \hal\int_{B_1}\(|\nab \tilde v|^2 +
\frac{(1-|\tilde v|^2)^2}{2{\ep'}^2}\)\\
 \ge \min\left\{\hal\int_{B_1}\(
|\nab u|^2 +\frac1{2{\ep'}^2} (1-|u|^2)^2\) \mid \text{$u =
e^{id_i\theta}$ on $\partial B_1$}\right\} +
o_{\eta,\ep}(1).\end{multline*}
From \cite{bbh}, the right-hand side
is precisely equal to $\pi |d_i|\log \frac1{\ep'} + C_{|d_i|}
+o_\ep(1)$, where the constant $C_d$ is equal to $\gamma$ if $d =
 1$. Thus we have proved \eqref{bbhbup}, and then
\eqref{uneboule}.

\section{Proof of Proposition \ref{jerr}}

The proof of Proposition~\ref{jerr}  is based on the ball construction of R. Jerrard
\cite{jerr}, hence  we will only  emphasize
the points which need some modification, mostly to take into account
the presence of the magnetic potential $A$ the way we do in
\cite{livre}. We will denote by $c$, $C$, respectively, a small and a large generic universal constant. We will number the constants we need to keep track of. Throughout this section $U$ is a bounded domain in $\mr^2$ and $(u,A)$ are defined on $U$.

The first ingredient is  a  lower bound for the energy of $|u|$  on
a circle (\cite{jerr} Lemma 2.3). It is valid for any $\ep>0$.
\begin{lem}
Assuming  $2 r\ge \ep>0$ and $x$ are such that the closed ball $B(x,r)\subset U$ we have
\begin{equation} \label{esmodcir} \hal \int_{\p B(x, r)}
|\nab |u||^2 + \frac{(1-|u|^2)^2}{2\ep^2}\ge c_0
\frac{(1-m)^2}{\ep},\end{equation} where $m= \min_{\p B(x,r)} |u|$.
\end{lem}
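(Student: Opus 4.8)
The inequality only concerns the modulus $|u|$, so the plan is to reduce it to a one-dimensional statement about the scalar function $\rho = |u|$ restricted to the circle $\p B(x,r)$. First I would parametrize $\p B(x,r)$ by arclength, so that $\ds\int_{\p B(x,r)}|\nab|u||^2 \ge \int_{\p B(x,r)}|\p_\tau \rho|^2$, where $\p_\tau$ is the tangential derivative; thus it suffices to prove
$$\frac{1}{2}\int_{\p B(x,r)}|\p_\tau\rho|^2 + \frac{(1-\rho^2)^2}{2\ep^2} \ge c_0\,\frac{(1-m)^2}{\ep},\qquad m=\min_{\p B(x,r)}\rho.$$
Let $\theta_0$ be a point where $\rho(\theta_0)=m$. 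By the one-dimensional mean-value / Cauchy--Schwarz trick, starting from $\theta_0$ and moving along the circle, either $\rho$ stays below $\tfrac{1+m}{2}$ on an arc of length comparable to $\ep$ emanating from $\theta_0$ — in which case the potential term $\int (1-\rho^2)^2/(2\ep^2)$ over that arc is already $\gtrsim (1-m)^2/\ep$ because $1-\rho^2 \ge 1-\tfrac{(1+m)^2}{4}\ge c(1-m)$ there — or $\rho$ climbs from $m$ up to $\tfrac{1+m}{2}$ within an arc of length $\le C\ep$, in which case $\int|\p_\tau\rho|^2 \ge \big(\tfrac{1-m}{2}\big)^2/(C\ep) \gtrsim (1-m)^2/\ep$ by Cauchy--Schwarz. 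Here one uses crucially that the total length of $\p B(x,r)$ is $2\pi r \ge \pi\ep$, so there is ``room'' of size at least of order $\ep$ to run this dichotomy; this is exactly where the hypothesis $2r\ge\ep$ enters. One also has to treat the degenerate case $r$ of order $\ep$ versus $r$ large separately, but in the large-$r$ case the arc of length $\sim\ep$ certainly fits inside the circle, and in the borderline case $r\sim\ep$ the whole circle has length $\sim\ep$ and the same estimate on all of $\p B(x,r)$ works.

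The one genuinely delicate point is making the dichotomy quantitative with a \emph{universal} constant $c_0$ independent of $\ep$, $r$, $x$, and of whether $m$ is close to $1$ or far from it: one must choose the threshold (here $\tfrac{1+m}{2}$) and the comparison arclength carefully so that both alternatives deliver the same lower bound $c_0(1-m)^2/\ep$, and check the elementary inequality $1-\rho^2\ge c(1-m)$ on the sublevel set $\{\rho\le \tfrac{1+m}{2}\}$ (valid with $c=\tfrac12$, say, using $1-\rho^2=(1-\rho)(1+\rho)\ge (1-\rho)\ge \tfrac{1-m}{2}$). Since this is Lemma 2.3 of Jerrard \cite{jerr}, I would simply cite that reference for the detailed computation, remarking that the estimate depends only on $|u|$ and hence is insensitive to the magnetic gauge — which is why the same statement holds verbatim with $|\nab u|$ replaced by $|\nab_A u|$ after noting $|\nab_A u|\ge |\nab|u||$ pointwise.
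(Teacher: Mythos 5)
Your proposal is correct and matches what the paper actually does: the paper gives no proof of this lemma, it simply quotes it as Lemma~2.3 of Jerrard \cite{jerr}, which is exactly the reference you defer to at the end. The dichotomy you sketch (either $\rho=|u|$ stays below $(1+m)/2$ on an arc of length $\sim\ep$ emanating from a minimum point, so the potential term dominates, or $\rho$ climbs up to $(1+m)/2$ within such an arc, so Cauchy--Schwarz on the tangential derivative dominates) is indeed the mechanism of Jerrard's proof, and you correctly identify that the hypothesis $2r\ge\ep$ is what guarantees an arc of length comparable to $\ep$ fits on the circle.
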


 In contrast to \cite{jerr} and because we wish to work with constants independent of $U$ we introduce
$$U_\ep = \{x\in U\mid \dist(x,U^c) >\ep\}.$$
Then  $u:U\to \C$ being given
 we introduce, following \cite{jerr},  $S = \{x\in U_\ep\mid |u|\le 1/2\}$. Assuming $u$ is continuous the connected  components of $S$ which are included in $U_\ep$ are compact, and $u/|u|$ has a well defined degree, or winding number on their boundary. Then we let
$$S_E = \{\text{Union of the components of $S$ with nonzero boundary degree}\}.$$
Still following \cite{jerr}, for any compact $K\subset U$ such that $\p K\cap S_E = \varnothing$ we let
$$ \deg_E(u,\p K) = \sum_{\text{$S_i$ component of $S_E$}} \deg(u,\p S_i).$$
Note that this degree is defined even if $|u|$ vanishes on $\p K$, provided the points where it vanishes are not in $S_E$.

The previous lemma implies (see \cite{jerr}, Proposition~3.3)
\begin{lem}\label{lem01}
There exists   a collection of disjoint closed  balls $B_1, \cdots, B_k$ of radii $r_1, \cdots, r_k$
 such that
\begin{enumerate}
\item $\forall i, r_i \ge \ep$
\item $ S_E\cap U_\ep \subset \cup_{i=1}^k B_i$
\item $\forall i, e_\ep(U\cap B_i)\ge c_1 r_i/\ep$.
\end{enumerate}
 \end{lem}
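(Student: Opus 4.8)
The plan is to carry out the initial step of Jerrard's ball construction (\cite{jerr}, Proposition~3.3); the only delicate point is that the balls from which one reads off the ``core'' energy must be chosen disjoint, so that their energies can be added.

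\textbf{Step 1: a core lower bound.} First I would record the standard fact, a direct consequence of \eqref{esmodcir}, that there is a universal $c>0$ such that whenever $x_0\in U$, $|u(x_0)|\le\hal$ and $\overline{B}(x_0,\ep)\subset U$, one has $e_\ep\big(B(x_0,\ep)\big)\ge c$. Indeed, for $s\in(\ep/2,\ep)$ the circle $\partial B(x_0,s)$ satisfies $2s\ge\ep$ and $\overline{B}(x_0,s)\subset U$, so \eqref{esmodcir} applies; if $\min_{\partial B(x_0,s)}|u|\le\tfrac34$ for a.e.\ such $s$, then integrating \eqref{esmodcir} over $s\in(\ep/2,\ep)$ and using $e_\ep\ge\hal|\nab |u||^2+\tfrac{(1-|u|^2)^2}{4\ep^2}$ gives $e_\ep(B(x_0,\ep))\ge c_0/32$; otherwise $|u|\ge\tfrac34$ on a whole circle $\partial B(x_0,s_0)$ with $s_0\in(\ep/2,\ep)$ while $|u(x_0)|\le\hal$, and a standard radial argument (splitting, along each radius, the part where $|u|$ is bounded away from $1$, handled by the potential term, from the part where it is close to $1$, forcing a definite radial variation of $|u|$; see \cite{bbh} or \cite{jerr}) again bounds $e_\ep(B(x_0,s_0))$, hence $e_\ep(B(x_0,\ep))$, below by a universal constant.

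\textbf{Step 2: a separated net and merging.} Since $S_E\cap U_\ep$ is compact, I would fix a maximal $2\ep$-separated subset $\{x_1,\dots,x_N\}$ of $S_E\cap U_\ep$. Maximality gives $S_E\cap U_\ep\subset\bigcup_j\overline{B}(x_j,2\ep)$; the separation makes the smaller balls $B(x_j,\ep)$ pairwise disjoint; and $x_j\in U_\ep$ gives $\overline{B}(x_j,\ep)\subset U$, so Step~1 applies on each $B(x_j,\ep)$ and yields $e_\ep(B(x_j,\ep))\ge c$. Then I would run the usual merging: while two balls of the current family have intersecting closures, remove both and replace them by a single closed ball containing their union with radius equal to the sum of the two radii. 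Each step decreases the number of balls, so the process terminates, leaving pairwise disjoint closed balls $B_1=\overline{B}(y_1,\rho_1),\dots,B_k=\overline{B}(y_k,\rho_k)$ whose union still contains $\bigcup_j\overline{B}(x_j,2\ep)\supset S_E\cap U_\ep$. Tracking which initial balls land in which $B_i$ gives a partition $\{1,\dots,N\}=\bigsqcup_{i=1}^k J_i$ with $\rho_i\le 2\ep\,|J_i|$; since radii never decrease under merging, $\rho_i\ge 2\ep\ge\ep$. This gives (1) and (2).

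\textbf{Step 3: the energy bound.} For each $i$, the ball $B_i$ contains the pairwise disjoint balls $B(x_j,\ep)$, $j\in J_i$, all contained in $U$, hence by Step~1
$$e_\ep(U\cap B_i)\;\ge\;\sum_{j\in J_i}e_\ep\big(B(x_j,\ep)\big)\;\ge\;c\,|J_i|\;\ge\;\frac{c}{2}\,\frac{\rho_i}{\ep},$$
which is (3) with $c_1=c/2$. The one place where the argument is more than bookkeeping is precisely this summation: one cannot add core energies over overlapping balls, so the construction must start from a $2\ep$-net rather than from an arbitrary finite subcover of $S_E\cap U_\ep$; with an arbitrary subcover the merging would still produce disjoint covering balls, but one could only conclude $e_\ep(B_i)\ge c$ instead of the required $e_\ep(B_i)\ge c_1\rho_i/\ep$.
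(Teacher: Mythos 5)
Your Step~1 is not correct as stated, and the error is at the heart of why Lemma~\ref{lem01} is about $S_E$ rather than about $S=\{|u|\le 1/2\}$. The claim that $|u(x_0)|\le\hal$ and $\overline{B}(x_0,\ep)\subset U$ already force $e_\ep(B(x_0,\ep))\ge c$ is false. Take $A\equiv 0$, $x_0=0$, and $u(x)=f(|x|)$ radial and real-valued with $f(0)=0$, $f\equiv 1$ for $r\ge \delta'$, and
$$f(r)=\frac{\big(\log(r/\eta)\big)_+}{\log(\delta'/\eta)}\quad\text{for } r\le\delta',\qquad \eta\ll\delta'\ll\ep.$$
Then $\int_{B_\ep}|\nabla u|^2 = \frac{2\pi}{\log(\delta'/\eta)}$ and $\int_{B_\ep}\frac{(1-|u|^2)^2}{4\ep^2}\le\frac{\pi\delta'^2}{4\ep^2}$, so with $\delta'=\ep/n$, $\eta=\ep/n^2$ one gets $e_\ep(B(0,\ep))\le \frac{\pi}{\log n}+\frac{\pi}{4n^2}\to 0$ as $n\to\infty$, for fixed $\ep$. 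The underlying reason your radial argument breaks is the Jacobian factor $r$ in polar coordinates: the transition of $|u|$ from $\le\hal$ to $\ge\tfrac34$ can be concentrated near $x_0$, where $r$ is small, and then neither the gradient term (because of the $r$-weight) nor the potential term (because the transition region has area $\ll\ep^2$) gives a uniform lower bound. Nothing in your case~(b) rules this out, because you never use the hypothesis that $x_0\in S_E$.

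What the paper actually uses in case~(b) is the \emph{degree} of the connected component of $x_0$ in $S$: since $x_0\in S_E$ that component has nonzero boundary degree, and once a circle $\partial B_r(x_0)$, $r\in(\ep/2,\ep)$, avoids $\{|u|\le\hal\}$, the component is compactly contained in $B(x_0,r)$ and carries a nontrivial winding of $u/|u|$. It is that topological information — not the mere fact that $|u(x_0)|\le\hal$ — which yields $e_\ep(B(x_0,\ep))\ge c$ via Jerrard's estimate on circles enclosing nonzero degree (the mechanism behind \eqref{lbcircle}/\eqref{estncir}). In the counterexample above the degree is $0$, and indeed $x_0\notin S_E$, so the lemma makes no claim about it. To repair Step~1 you must replace the radial modulus argument in case~(b) by this degree argument, and then the rest of your proof goes through.

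For what it is worth, once Step~1 is fixed, your Steps~2--3 are a perfectly valid (and arguably cleaner) alternative to the paper's route: the paper passes through Besicovitch's covering lemma to extract a disjoint subfamily whose dilates cover $S_E$, then merges, whereas your maximal $2\ep$-separated net gives directly a disjoint family of equal-radius $\ep$-balls inside the covering $2\ep$-balls, so the additivity in Step~3 is immediate. Both approaches finish with the same merging procedure and give item~(3) with a universal constant.
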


\begin{proof} We only sketch the proof. If
$x\in S_E$ then either $\p B_r(x)$ intersects
$\{|u| \le 1/2\}$ for every $\ep/2\le r\le\ep$ and  the above
lemma implies that $e_\ep(U\cap B(x,\ep))\ge c$ or there exists
 $\ep/2\le r\le \ep$ such that $|u|> 1/2$ on $\p B_r(x)$ and
 then the connected component  of $x$ in $S_E$, which has nonzero
 degree, is  included in $B(x,r)$. The nonzero degree  implies   again
 (see \cite{jerr})
 $e_\ep(U\cap B(x,\ep))\ge c$. We thus have a cover of $S_E$ by balls which satisfy $e_\ep(B)\ge c r(B)/\ep$.

From Besicovitch's Lemma, there exists a disjoint subcollection
$\{B_k\}_k$ such that $\{\widetilde B_k\}_k$ covers $S_E$, where
$\widetilde B_k = C B_k$ and $C$ is a universal constant. These
balls still satisfy  $e_\ep(B)\ge c r(B)/\ep$, though with a smaller
constant.  Then, grouping the balls which intersect in larger ones
as in \cite{jerr} (see also  \cite{ss1}) allows to obtain a disjoint
cover of $S_E$ with the same property. Item 1) is trivially verified
since the balls we started with had radius $\ep$. Note also that the
balls obtained here only depend on $S_E$ hence on $u$.

\end{proof}

Still following  \cite{jerr}, we have:
\begin{pro}Choose $c_2\in (0,c_1) $ small enough and let
$$\lambda_\ep(x) = \min\(\frac{c_2}\ep, \frac\pi x\frac1{1+\frac x2+\frac{\pi\ep}{c_0 x}}\).$$

Then, assuming that  $B_r\subset U_\ep$, that $\p B_r\cap S_E =
 \varnothing$  and that $\ep\le r\le |d|/2$, where  $d = \deg_E(u,\p B_r)$ is assumed to be different from $0$, we have
\begin{equation}
\label{estncir} \hal \int_{\p B_r}  |\nab_{A} u |^2 +
\frac{1}{2}\int_{B_r}|\curl A  |^2 +\frac1{4\ep^2}\int_{ \p B_r}\(1 - |u|^2\)^2\ge \lambda_\ep\(
\frac{r}{|d|}\) .\end{equation}
Moreover, the primitive function $\Lambda_\ep(x) =
 \int_0^x \lambda_\ep$ is increasing, $s\mapsto \Lambda_\ep(s)/s$ is decreasing,
$$\lim_{s\searrow 0}  \frac{\Lambda_\ep(s)}s  = \frac{\min(c_0, c_2)}\ep<\frac{c_1}{\ep},\quad \frac{\Lambda_\ep(\ep)}\ep \ge \frac{c_3}\ep$$
and finally, for any $\ep\le s\le 1/2$, and for some $C_0>0$,
\begin{equation}\label{propL}
\Lambda_\ep(s) \ge \pi \log \frac{s}{\ep} - C_0.
\end{equation}
\end{pro}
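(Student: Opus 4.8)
The proposition naturally splits into the circle bound \eqref{estncir} and the elementary properties of $\Lambda_\ep$, and I would treat the latter first. Since $\lambda_\ep>0$, $\Lambda_\ep$ is increasing; since $x\mapsto\pi/(x+x^2/2+\pi\ep/c_0)$ is decreasing and $c_2/\ep$ is constant, $\lambda_\ep$ is non-increasing, whence $\frac{d}{ds}\big(\Lambda_\ep(s)/s\big)=\big(s\lambda_\ep(s)-\Lambda_\ep(s)\big)/s^2\le 0$ because $\Lambda_\ep(s)=\int_0^s\lambda_\ep\ge s\lambda_\ep(s)$; and $\lim_{s\to 0}\Lambda_\ep(s)/s=\lambda_\ep(0^+)=\min(c_2/\ep,c_0/\ep)$, which is $<c_1/\ep$ since $c_2<c_1$. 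Monotonicity also gives $\Lambda_\ep(\ep)\ge\ep\,\lambda_\ep(\ep)=\min\big(c_2,\pi/(1+\tfrac\ep2+\tfrac\pi{c_0})\big)\ge c_3$ for $\ep\le 1$. For \eqref{propL}, on the branch where $\lambda_\ep(x)=\pi/(x+x^2/2+\pi\ep/c_0)$ the inequality $\tfrac1{1+y}\ge 1-y$ gives $\lambda_\ep(x)\ge\tfrac\pi x-\tfrac\pi2-\tfrac{\pi^2\ep}{c_0x^2}$, while the branch $\lambda_\ep=c_2/\ep$ (if present at all) occurs only for $x=O(\ep)$ and produces an $O(1)$ defect; integrating over $[\ep,s]$ with $\ep\le s\le\tfrac12$ then yields $\Lambda_\ep(s)\ge\pi\log(s/\ep)-C_0$.

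For \eqref{estncir}, write $\rho=|u|$, $m=\min_{\p B_r}\rho$, and set $M:=\hal\int_{\p B_r}|\nab\rho|^2+\tfrac1{4\ep^2}\int_{\p B_r}(1-\rho^2)^2$, so that \eqref{esmodcir} (applicable since $B_r\subset U$ and $2r\ge\ep$) gives $M\ge c_0(1-m)^2/\ep$. If $M\ge c_0/(4\ep)$ then, since $|\nab_Au|^2\ge|\nab\rho|^2$, the left side of \eqref{estncir} is $\ge M\ge c_0/(4\ep)\ge c_2/\ep\ge\lambda_\ep(r/|d|)$ once $c_2\le c_0/4$; this disposes of that case at once (in particular of $m\le\hal$). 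So assume $M<c_0/(4\ep)$, which forces $\rho\ge m\ge 1-\sqrt{M\ep/c_0}>\hal$ on $\p B_r$; in particular $\p B_r$ misses $S=\{|u|\le\hal\}$, so $u=\rho e^{i\vp}$ on $\p B_r$ and, zero-degree components of $S$ contributing nothing, $\deg(u/|u|,\p B_r)=\deg_E(u,\p B_r)=d$ by additivity of the winding number over the components of $S$ lying in $B_r$.

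Next I would decompose the tangential part of the energy: $|\nab_Au|^2\ge|\nab\rho|^2+\rho^2\,|\p_\tau\vp-A\cdot\tau|^2$, so the left side of \eqref{estncir} is $\ge M+\hal\int_{\p B_r}\rho^2|\p_\tau\vp-A\cdot\tau|^2+\hal\int_{B_r}(\curl A)^2$. Using Cauchy--Schwarz against $\int_{\p B_r}\rho^{-2}$, Stokes' theorem $\int_{\p B_r}A\cdot\tau=\int_{B_r}\curl A=:\Phi$, the bound $\Phi^2\le\pi r^2\int_{B_r}(\curl A)^2$, and the elementary estimate $(2\pi d)^2=\big((2\pi d-\Phi)+\Phi\big)^2\le\Big(\tfrac{(2\pi d-\Phi)^2}{\int\rho^{-2}}+\tfrac{\Phi^2}{\pi r^2}\Big)\big(\int\rho^{-2}+\pi r^2\big)$, the last two terms together are $\ge\tfrac{2\pi^2 d^2}{\int_{\p B_r}\rho^{-2}+\pi r^2}$. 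Since $\rho\ge 1-\sqrt{M\ep/c_0}$ with $\sqrt{M\ep/c_0}<\hal$, one also has $\int_{\p B_r}\rho^{-2}\le 2\pi r\big(1+C\sqrt{M\ep/c_0}\big)$. Combining, the left side of \eqref{estncir} is bounded below by $\frac{2\pi^2 d^2}{2\pi r(1+C\sqrt{M\ep/c_0})+\pi r^2}+M$, and it remains to minimise this scalar function of $M\in[0,c_0/(4\ep))$ and to check the minimum is $\ge\lambda_\ep(r/|d|)=\frac{2\pi^2 d^2}{2\pi r|d|+\pi r^2+2\pi^2 d^2\ep/c_0}$, using $|d|\ge 1$ and $\ep\le r\le|d|/2$: one splits on $|d|\ge 2$ (where the extra factor $|d|$ in the target denominator gives the room) versus $|d|=1$ (where the $\pi r^2$ term does the work when $r$ is of order one, and the $\pi\ep/c_0$ term when $r$ is of order $\ep$).

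The crux, and the step where I expect the real work, is this last verification: one must lose no constant in the leading term $\pi|d|/r$, which is exactly what pins down the denominator $1+x/2+\pi\ep/(c_0x)$ of $\lambda_\ep$ and forces a consistent choice of the universal constants ($c_0$ small, $c_2$ much smaller, compatibly with $c_2<c_1$ and $c_2\le c_0/4$). The genuinely delicate regime is the intermediate one, where $M$ is neither negligible nor of order $1/\ep$: there the perturbation $C\sqrt{M\ep/c_0}$ of $\int\rho^{-2}/(2\pi r)$ away from $1$ is at worst of order $\ep/r$, and must be beaten by whichever of the $\pi r^2$ or $2\pi^2 d^2\ep/c_0$ corrections in the denominator of $\lambda_\ep$ is larger — which is precisely why both corrections are built into $\lambda_\ep$. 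Everything else (Stokes, the Cauchy--Schwarz manipulations, and the identification $\deg_E=\deg(u/|u|,\p B_r)$) is routine.
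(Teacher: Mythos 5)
Your treatment of the elementary properties of $\Lambda_\ep$ is correct and essentially identical to the paper's argument. For the circle bound \eqref{estncir}, your overall scheme (split on whether the modulus energy is large, then Cauchy--Schwarz against $\int\rho^{-2}$, Stokes, and a scalar minimization with a final dichotomy on $|d|$) is the same as the paper's, which after the Cauchy--Schwarz and minimization over $X=\int_{B_r}\curl A$ arrives at
$$
e_\ep(\p B_r)\ \ge\ \frac{\pi |d|}{r}\cdot\frac{|d|}{\tfrac{1}{m^2}+\tfrac r2}\ +\ c_0\,\frac{(1-m)^2}{\ep},\qquad m=\min_{\p B_r}|u|,
$$
and then minimizes over $m$.

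However, there is a genuine gap in your version, and it is located exactly in the step you defer. You parametrize by $M$ rather than by $m$, and to eliminate $m$ you insert the bound $\int_{\p B_r}\rho^{-2}\le 2\pi r\,(1+C\sqrt{M\ep/c_0})$, obtained from $m\ge 1-\sqrt{M\ep/c_0}$ and $(1-x)^{-2}\le 1+Cx$ on $[0,\tfrac12]$; the latter forces $C\ge 6$. But the form of $\lambda_\ep$ is tuned so that the final minimization is an exact AM--GM: writing $y=m^{-2}-1\approx 2(1-m)$ and $z=\pi\ep/(c_0 r)$, the paper's bound reduces (for $|d|=1$, $m$ near $1$) to $\tfrac{c_0 y^2}{4\ep}\ge\tfrac\pi r(y-z)$, i.e.\ $(y-2z)^2\ge 0$, with no slack. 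Your substitution replaces $y=m^{-2}-1$ by $C\sqrt{M\ep/c_0}$ while keeping the cost $M=c_0(\sqrt{M\ep/c_0})^2/\ep$, so the effective quadratic cost becomes $\tfrac{c_0 y^2}{C^2\ep}$ instead of $\tfrac{c_0 y^2}{4\ep}$; the inequality then reads $y^2-C^2 zy+C^2 z^2\ge 0$, whose discriminant $C^2z^2(C^2-4)$ is \emph{positive} for $C>2$. Thus for $r$ in a range roughly $\ep\ll r\le\tfrac12$ there is an interval of $M$-values where your claimed lower bound falls strictly below $\lambda_\ep(r/|d|)$, and neither the $\pi r^2$ nor the $\pi\ep/(c_0x)$ correction in $\lambda_\ep$ can absorb a factor $C\ge 6$ where the matching constant is $2$. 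The fix is to keep $m$ (not $M$) as the minimization variable, use $\int\rho^{-2}\le 2\pi r/m^2$ exactly as the paper does in \eqref{lbcircle2}, and only then minimize; with that change your argument reduces to the paper's.
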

\begin{proof} First, in   the case where $\p B_r$ intersects $\{|u|\le 1/2\}$ we deduce from  \eqref{esmodcir}  that \eqref{estncir} is satisfied with $c_2 = c_0/4$.

When on the contrary $|u|>1/2$ on $\p B_r$ we have $\deg_E(u,\p B_r)
= \deg(u,\p B_r)$. Then we bound from below $\hal\int_{\p B_r}
|u|^2 |\nab \vp-A|^2 $, where $u = |u|e^{i\vp}$ as
follows: Still denoting  $m= \min_{\p B_r} |u|$, using the
Cauchy-Schwarz inequality  we have
$$ \hal\int_{\p B_r} |u|^2 |\nab \vp-A|^2 \ge \frac{m^2}{2}\frac{1}{2\pi r} \( \int_{\p B_r} \frac{\p \vp}{\p \tau} - A \cdot \tau\)^2  = \frac{m^2}{4\pi r} ( 2\pi d - X  )^2  $$
where we write $X:=  \int_{B_r} \curl A  = \int_{\p B_r} A
\cdot \tau   $. On the other hand, by Cauchy-Schwarz
again
 $$\frac{1}{2} \int_{B_r} |\curl A |^2 \ge \frac{1}{2\pi r^2 }\( \int_{B_r} \curl A  \)^2 =
 \frac{ X^2}{2\pi r^2 }
 $$
 Adding the two relations we obtain
$$ \hal\int_{\p B_r} |u|^2 |\nab \vp-A|^2+ \frac{1}{2} \int_{B_r} |\curl A  |^2\ge \frac{1}{2\pi r} \( \frac{m^2}{2} (2\pi d - X  )^2 +  \frac{1}{r} X^2\).
$$
Minimizing the right-hand side with respect to $X$ yields
\begin{equation}\label{lbcircle} \hal\int_{\p B_r} |u|^2 |\nab \vp-A|^2+ \frac{1}{2} \int_{B_r} |\curl A  |^2\ge \frac{\pi d^2}{r}\frac{m^2}{1+\frac{m^2 r}2}.\end{equation}
Adding \eqref{esmodcir} we deduce for $r\ge\ep$ that
\begin{equation}\label{lbcircle2} e_\ep(\p B_r) \ge \frac{\pi |d|}r\frac{|d|}{\frac1{m^2} + \frac r2} + c_0\frac{(1-m)^2}{\ep}.\end{equation}

If $|d|>1$, then either $m^2<2/3$ and we find $e_\ep>c/\ep$ for a
well chosen $c>0$  or $m^2\ge 2/3$ and, since $r/2 < |d|/4$, we have
$m^{-2} + r/2\le 3/2 +  |d|/4 \le |d|$ implying $e_\ep\ge \pi
|d|/r$. Thus, if $|d|>1$, \eqref{estncir} is satisfied. If $|d|=1$
then minimizing the right-hand side of \eqref{lbcircle2} with
respect to $m$ yields
$$e_\ep(\p B_r)\ge \frac\pi r\frac1{1+\frac r2+\frac{\pi\ep}{c_0 r}},$$
so that in every case we have $e_\ep(\p B_r)\ge \lambda_\ep(r/|d|),$ if $c_2$ is chosen small enough.

We now turn to the properties of $\Lambda_\ep$. Since $\lambda_\ep$
is positive, decreasing, then $\Lambda_\ep$ is increasing and
$\Lambda_\ep(s)/s$ is decreasing. It is clear that as $s \to 0$, we
have  $\lambda_\ep(s) \sim  \min(c_0,c_2)/\ep\sim \Lambda_\ep(s)/s$.
Moreover, if $x> c\ep$, with $c = \pi/c_2$, then
$$\lambda_\ep(x) = \frac\pi x\frac1{1+\frac x2+\frac{\pi\ep}{c_0 x}}$$
hence, if $s\ge c \ep$,
\begin{equation*}\begin{split}\Lambda_\ep(s)&\ge\int_{c\ep}^s
\frac\pi x\frac1{1+\frac x2+\frac{\pi\ep}{c_0 x}}\,dx\ge
 \int_{c\ep}^s \frac\pi x\(1 - \frac x2 - \frac{\pi\ep}{c_0x}\) dx
 \\ &\ge \pi\log\frac s\ep - C_0,\end{split}\end{equation*}
for some   constant $C_0$. If $s < c \ep$ then the inequality remains true if $C_0$ is chosen large enough, since $\Lambda_\ep(s)\ge 0.$

Finally, $\Lambda_\ep(\ep)\ge \ep \lambda_\ep(\ep) \ge c_3$, if
$c_3>0$ is chosen small enough.
\end{proof}

 From there, the ball construction
procedure (growing and merging of balls) from \cite{jerr} (or see
\cite{ss1} Prop 3.1) allows to deduce
\begin{pro} \label{pros} For any $0<s<1/2$ there exists a family of disjoint closed balls $\B(s)$ (depending only on $u_\ep$)
  such that
\begin{enumerate}
\item The family of balls is monotonic i.e. if $s<t $, we have $\B(s)\subset \B(t)$. Moreover, denoting by  $r(B)$ the radius of $B$, the function $s\to \sum_{B\in\B(s)} r(B)$ is continuous.
\item For any $s$ we have $S_E\subset \B(s)$.
\item For any $B\in\B(s)$
$$ e_\ep(U\cap B) \ge r(B) \frac{\Lambda_\ep(s)}{s}.$$
\item If $B\in\B(s)$ and $B\subset U_\ep$ then, letting $d_B = \deg_E(u_\ep,\p B)$, we have $  r(B) \ge  s| d_B|$.
\end{enumerate}
\end{pro}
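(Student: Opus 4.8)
The plan is to run the growing‑and‑merging ball construction of Jerrard \cite{jerr} (see also \cite{ss1}, Proposition~3.1, and \cite{livre}, Chapter~4) starting from the collection of Lemma~\ref{lem01}, the only new ingredients being the circle lower bound \eqref{estncir}, which already contains the magnetic field in the form used in \cite{livre}, and the function $\Lambda_\ep$, whose relevant properties --- $\Lambda_\ep$ nondecreasing, $s\mapsto\Lambda_\ep(s)/s$ nonincreasing, the bound \eqref{propL}, and $\Lambda_\ep(s)/s\to\min(c_0,c_2)/\ep<c_1/\ep$ as $s\to0$ --- have just been recorded. On small scales I would simply set $\mathcal B(s):=\{B_1,\dots,B_k\}$ for $0<s\le\ep$, the disjoint balls of Lemma~\ref{lem01}: property~2 holds since $S_E\subset U_\ep$ (so $S_E\cap U_\ep=S_E$), property~1 is trivial on $(0,\ep]$, property~3 holds because $e_\ep(U\cap B_i)\ge c_1 r_i/\ep\ge r_i\Lambda_\ep(s)/s$ (using $\Lambda_\ep(s)/s\le\min(c_0,c_2)/\ep<c_1/\ep$), and property~4 because the balls of Lemma~\ref{lem01} may be taken to satisfy $r_i\ge\ep|d_{B_i}|$ (the constants of this initial step being fixed as in \cite{jerr,livre}).

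For $s>\ep$ I would alternate, as in \cite{jerr,ss1}, a \emph{growth phase}, in which every ball $B$ of nonzero degree with $r(B)=s|d_B|$ (``tight'') is dilated about its centre so as to preserve this relation while all other balls (``fat'', in particular every degree‑$0$ ball) are left fixed, run until two balls first touch; and a \emph{merge}, in which a maximal cluster of pairwise touching balls $B_1,\dots,B_j$ is replaced by a single closed ball $B\supset\bigcup_i B_i$ with $r(B):=\sum_i r(B_i)$ and $d_B:=\sum_i d_{B_i}=\deg_E(u_\ep,\partial B)$, disjointness of the new collection being ensured as in \cite{jerr,ss1}. Since one starts with finitely many balls and every merge strictly decreases their number, only finitely many merges occur, so $\mathcal B(s)$ is defined on all of $(0,1/2)$; the total radius $s\mapsto\sum_B r(B)$ is continuous (piecewise linear during a growth phase, unchanged at a merge), all radii are nondecreasing, and $\bigcup\mathcal B(s)$ only grows, which gives properties~1 and~2.

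Property~4 is then preserved by construction: it holds with equality for tight balls and strictly for fat ones, it survives growth, and at a merge $r(B)=\sum_i r(B_i)\ge s\sum_i|d_{B_i}|\ge s|d_B|$ whenever $B\subset U_\ep$ (in which case every $B_i\subset U_\ep$ as well). For property~3: at a merge, $e_\ep(U\cap B)\ge\sum_i e_\ep(U\cap B_i)\ge\bigl(\sum_i r(B_i)\bigr)\Lambda_\ep(s)/s=r(B)\Lambda_\ep(s)/s$; a fat ball keeps its energy and radius while $\Lambda_\ep(s)/s$ decreases, so its inequality persists; and for a tight ball of degree $d$ grown from radius $s_1|d|$ to $s_2|d|$, the coarea formula together with \eqref{estncir} --- applicable because each circle $\partial B(a,\tau)$ with $s_1|d|\le\tau\le s_2|d|$ lies outside the inner ball, hence misses $S_E$, carries the degree $d$, and satisfies $\ep\le\tau$ and $\tau/|d|\le 1/2$ --- gives that the energy gained in the annulus is at least $\int_{s_1|d|}^{s_2|d|}\lambda_\ep(\tau/|d|)\,d\tau=|d|\bigl(\Lambda_\ep(s_2)-\Lambda_\ep(s_1)\bigr)$, so $e_\ep(U\cap B)\ge|d|\Lambda_\ep(s_1)=r(B)\Lambda_\ep(s_1)/s_1$ at scale $s_1$ upgrades to $e_\ep(U\cap B)\ge|d|\Lambda_\ep(s_2)=r(B)\Lambda_\ep(s_2)/s_2$ at scale $s_2$. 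The bulk magnetic term of \eqref{estncir} is carried through the integration over $\tau$ exactly as in \cite{livre}, Chapter~4.

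The part I expect to require the most care is the bookkeeping underlying the last two paragraphs rather than any isolated computation: tracking $\deg_E$ along growing annuli and through merges (where it is crucial that $S_E$ already lies inside the innermost ball, so the annular circles carry the total degree); organizing the merging so that disjointness and the identity $r(B)=\sum_i r(B_i)$ hold simultaneously; handling balls that reach $U_\ep^c$, for which $\deg_E(u_\ep,\partial B)$ need not be defined --- one freezes such a ball before it would leave $U_\ep$, so that property~4 becomes vacuous for it while property~3 survives because the energy added inside $U$ is nonnegative --- and pinning down the constants of the initial step, in particular that one may take $r_i\ge\ep|d_{B_i}|$. All of this is done as in \cite{jerr,ss1,livre}, and none of it involves the magnetic field, which enters only through \eqref{estncir}.
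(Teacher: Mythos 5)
Your proposal is correct and follows the same approach as the paper: Jerrard's growing-and-merging construction initiated from the balls of Lemma~\ref{lem01}, with items 3 and 4 propagated through merges (via $r(B)=\sum_i r(B_i)$ and superadditivity of energy and degree) and through growth of tight balls (via the circle bound \eqref{estncir} and the coarea formula), while fat balls persist because $s\mapsto\Lambda_\ep(s)/s$ is nonincreasing. The one small caveat is your assertion that the balls of Lemma~\ref{lem01} ``may be taken to satisfy $r_i\ge\ep|d_{B_i}|$''; this is not part of that lemma and is not obvious, and the paper sidesteps it by simply letting the initial scale $s_0$ be small enough (possibly configuration-dependent, e.g.\ $s_0\le\min_i r_i/|d_{B_i}|$) so that items 3 and 4 hold there --- after which your argument goes through unchanged with $(0,s_0]$ in place of $(0,\ep]$.
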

\begin{proof} We let  $\B(s_0)$  be the family of balls given
 by  Lemma \ref{lem01}, where we choose $s_0$ small enough so that items
 3 and 4 are satisfied (item 2 obviously is).
  We let $\B(s) = \B(s_0)$ for every $s\le s_0$.
    For $s\ge s_0$ we apply the method of growing and merging
    of \cite{jerr} which we sketch briefly:
    It consists in continuously increasing the parameter $s$ and
    at the same time making those balls included in $U_\ep$
    such that $r(B) = s |d_B|$ grow so that the equality remains
    satisfied. When balls touch, the parameter  $s$ is stopped and
    the balls are merged into a larger ball with radius the sum
    of the radii of the merged balls, and  this is repeated if
    the resulting family is still not disjoint. This  does not
    change the total radius and when it is done, i.e. when the
    family is disjoint again, the increasing of $s$ is resumed,
     etc...
      This yields a family of disjoint closed balls which is monotonic, such that $s\to \sum_{B\in\B(s)} r(B)$ is continuous and such that $r(B) \ge  s| d_B|$ for every ball included in $U_\ep$. Obviously $S_E\cap U_\ep\subset \B(s)$ for every $s$.
Also the growing and merging process depends only on the initial
balls and the degrees of $u_\ep$, hence on $u_\ep$.

 The lower bound  $ e_\ep(U\cap B) \ge r(B) \Lambda_\ep(s)/s$ is true initially and is preserved through the merging process, it is also preserved through the growing process  as long as \eqref{estncir} remains valid, i.e. $r(B)<|d_B|/2$ for every $B\subset U_\ep$ such that $d_B\neq 0$.  This results from the properties of $\Lambda_\ep$, as detailed in \cite{jerr}. Then for the process to stop, there must be a ball $B$ for which $r(B) = s |d_B|$, i.e. a growing ball, with  $r(B)\ge |d_B|/2$, hence we must have $s\ge 1/2$.
 \end{proof}

We may now deduce
\begin{proof}[Proof of Proposition \ref{jerr}] We begin
by constructing a family $\B'(s)$ which contains $S_E$
 instead of $\{x\in U_\ep\mid |u|\le 1/2\}$ but satisfies items 2--3 and  then modify it.

Under the hypotheses, Proposition~\ref{pros} applies, and yields for every $0<s<1/2$ a family of balls $\B'(s)$ satisfying the 4 items stated.
Choosing $s_0$ small enough we have $\Lambda(s_0)/s_0\ge c /\ep$ hence, letting $r_0$ denote the total radius of the balls in $\B'(s_0)$,
$$\ep^{-\beta}\ge G_\ep(u, A) \ge \frac{c  r_0}{\ep}$$
and therefore $r_0\le C \ep^{1-\beta}$.

Let $r\in(C\ep^{1-\beta}, 1/2)$, and let $r_1$ denote the total
radius of the balls in $\B'(1/2)$. If $r>r_1$ then $\B'(1/2)$
 satisfies item  2 trivially and moreover for any $B\in \B'(1/2)$ we have from Proposition~\ref{pros} and using \eqref{propL} that
  $$e_\ep(B)\ge |d_B|\Lambda_\ep\(\hal\)\ge \pi|d_B|\(\log\frac1{2\ep} -C\)\ge \pi|d_B|\(\log\frac r{\cba\ep} -C'\),$$
  for any $r\le 1/2$ and any $\cba\ge 2$, proving item~3 in this case.

  If $r<r_1$ then there exists $s\in(s_0,1/2)$ such that $\B':=\B'(s)$ satisfies $r(\B') = r$. Then item 2 of the proposition is satisfied for this collection.
  Let us check item~3.

Assume then $e_\ep(\B') \le \overline{C} \log(r/\ep), $ with $2\le\overline{C}\le (r/\ep)^\hal $. We show by contradiction that if $M$ is chosen large enough, then
$$s\ge\frac r{M\overline{C}}.$$

Since $e_\ep(\B')\ge r\Lambda_\ep(s)/s$ and since
$\Lambda_\ep(s)/s$ is decreasing, if $s< r/(M\overline{C})$ and
 $ r/(M\overline{C})\le \hal$ then
$$\overline{C}\log\frac r\ep \ge M\overline{C} \Lambda_\ep\(\frac r{M\overline{C}}\)\ge \pi M\overline{C} \log\(\frac r{\ep M\overline{C}}\) - C_0  M\overline{C}.$$
It follows that
$$(1-\pi M) \log\frac r\ep + \pi M\log\overline{C} +\pi M\log M - C_0 M\ge 0,$$
which yields a contradiction for $M = 3/\pi$ and $r\ge C \ep$, with $C $ large enough, recalling that $\overline{C}\le (r/\ep)^\hal $. Therefore $s\ge \pi r/(3\overline{C})$ and then for every $B\in\B'$ such that
$B\subset U_\ep$ we have
$$e_\ep(B) \ge  r(B) \frac{ \Lambda_\ep(s)}{s}\ge |d_B| \Lambda_\ep(s) \ge |d_B| \Lambda_\ep\(\frac{\pi r}{3\overline{C}}\),$$
which in view of \eqref{propL} yields $\forall B \in \B'$ such that
$B\subset U_\ep$
$$e_\ep(B) \ge \pi  |d_B|\( \log \frac{r}{\ep \overline{C}}  -C \),$$
if $C$ is chosen large enough.

It remains to modify $\B'(s)$ so that $S:=\{x\in U_\ep\mid |u|\le 1/2\}\subset \B(r)$. First we note that a well known application of the coarea formula yields rather easily (see \cite{livre}, Proposition~4.8) that $S$ can be covered by a collection of disjoint closed balls $\C$ such that $r(\C)\le C \ep G_\ep \le C\ep^{1-\beta}$.  Then   for every $s$ we do the merging of the balls in $\C\cup \B'(s)$ as in the proof of Proposition~\ref{pros} to obtain $\B(s)$. If we chose $s$ such that  $r(\B'(s)) = r/2$  with $C\ep^{1-\beta} < r < 1$ and $C$ large enough, then $r(\B(s))\le r$ since $r(\C)\le   C\ep^{1-\beta}$. Moreover, if $B\in\B(s)$ is such that $B\subset U_\ep$ then $\deg(u,\p B)$ is the sum of $\deg_E(u, \p B')$ for $B'\in\B'(s)$ and $B'\subset B$. Then, if $e_\ep(B)\le \overline{C} \log (r/2\ep)$ the same bound holds for the $B'$'s and summing the above lower bounds we find
$$e_\ep(B) \ge \pi  |d_B|\( \log \frac{r}{2\ep \overline{C}}  -C \).$$
Changing the constant $C$ we can get rid of the factor 2 and $\B(s)$ has all the desired properties.
\end{proof}

\noindent
{\sc Etienne Sandier}\\
Universit\'e Paris-Est,\\
LAMA -- CNRS UMR 8050,\\
61, Avenue du G\'en\'eral de Gaulle, 94010 Cr\'eteil, France\\
\& Institut Universitaire de France\\
{\tt sandier@u-pec.fr}\\ \\
{\sc Sylvia Serfaty}\\
UPMC Univ  Paris 06, UMR 7598 Laboratoire Jacques-Louis Lions,\\
 Paris, F-75005 France ;\\
 CNRS, UMR 7598 LJLL, Paris, F-75005 France \\
 \&  Courant Institute, New York University\\
251 Mercer st, NY NY 10012, USA\\
{\tt serfaty@ann.jussieu.fr}

\end{document}